\newcommand{\arXivlink}[1]{\href{https://arxiv.org/abs/#1}{\texttt{arXiv:#1}}}
\newcommand*\RR{\mathbb{R}}
\newcommand*\CC{\mathbb{C}}
\newcommand*\NN{\mathbb{N}}
\newcommand*\ZZ{\mathbb{Z}}
\newcommand*\Lapl{\mathscr{L}}
\newcommand*\Rsl{\mathfrak{R}}
\newcommand*{\Rnon}{\RR_+}
\newcommand*{\Rpos}{\mathring{\RR}_+}
\newcommand*\ind{\mathbbm{1}}
\newcommand{\sobolev}[2]{L^{#2}_{#1}}
\newcommand{\hormander}[1]{\mathcal{H}_2^{#1}}
\DeclareMathOperator{\sign}{sign}
\newcommand*\supp{\mathop{\mathrm{supp}}}
\newcommand*\arccosh{\mathop{\mathrm{arccosh}}}
\newcommand*{\lie}[1]{\mathfrak{#1}}
\newcommand*\id{\mathrm{id}}
\newcommand*{\defeq}{\mathrel{:=}}
\newcommand*{\dist}{\varrho}
\newcommand*\Df{\mathcal{D}}
\newcommand*\LinBnd{\mathcal{L}}
\newcommand*\dd{\mathrm{d}}
\newcommand*\bdx{\mathbf{x}}
\newcommand*\bdy{\mathbf{y}}
\newcommand*\bdz{\mathbf{z}}
\newcommand*\Dom{\mathfrak{D}}
\newcommand*\Riesz{\mathcal{R}}
\newcommand*\iker{\mathcal{K}}
\DeclareMathOperator{\tr}{tr}
\theoremstyle{plain}
\newtheorem{thm}{Theorem}[section]
\newtheorem{lm}[thm]{Lemma}
\newtheorem{prop}[thm]{Proposition}
\newtheorem{cor}[thm]{Corollary}
\theoremstyle{remark}
\newtheorem{rem}[thm]{Remark}
\numberwithin{equation}{section}
\begin{document}
\title[Riesz transforms on solvable extensions of Carnot groups]{$L^p$-boundedness of Riesz transforms on solvable extensions of Carnot groups}

\author[A. Martini]{Alessio Martini}
\address[A. Martini]{Dipartimento di Scienze Matematiche ``G.L. Lagrange'' \\ Politecnico di Torino \\ Corso Duca degli Abruzzi 24 \\ 10129 Torino \\ Italy}
\email{alessio.martini@polito.it}

\author[P. Plewa]{Pawe\l{} Plewa}
\address[P. Plewa]{Dipartimento di Scienze Matematiche ``G.L. Lagrange'',
	Politecnico di Torino\\ Corso Duca degli Abruzzi 24 \\ 10129 Torino \\ Italy
	and Department of Pure and Applied Mathematics, 
	Wroc{\l}aw University of Science and Technology\\ wyb. Wys\-pia{\'n}\-skie\-go  27\\ 50–-370 Wroc{\l}aw\\ Poland }        
\email{pawel.plewa@pwr.edu.pl}

\begin{abstract}
Let $G=N\rtimes \RR$, where $N$ is a Carnot group and $\RR$ acts on $N$ via automorphic dilations.
Homogeneous left-invariant sub-Laplacians on $N$ and $\RR$ can be lifted to $G$, and their sum is a left-invariant sub-Laplacian $\Delta$ on $G$. We prove that the first-order Riesz transforms $X \Delta^{-1/2}$ are bounded on $L^p(G)$ for all $p\in(1,\infty)$, where $X$ is any horizontal left-invariant vector field on $G$. This extends a previous result by Vallarino and the first-named author, who obtained the bound for $p\in(1,2]$. The proof makes use of an operator-valued spectral multiplier theorem, recently proved by the authors, and hinges on estimates for products of modified Bessel functions and their derivatives.
\end{abstract}

\subjclass[2020]{22E30, 42B20, 42B30}

\keywords{Riesz transform, sub-Laplacian, solvable group, singular integral operator, operator-valued multiplier theorem, modified Bessel function}

\thanks{The authors gratefully acknowledge the financial support of Compagnia di San Paolo. The first-named author is a member of Gruppo Nazionale per l'Analisi Matematica, la Probabilit\`a e le loro Applicazioni (GNAMPA) of Istituto Nazionale di Alta Matematica (INdAM). The second-named author is also supported by the Foundation for Polish Science (START 057.2023).}

\maketitle

\section{Introduction}

Let $N$ be a stratified Lie group of step $r$. In other words, $N$ is a connected, simply connected nilpotent Lie group, whose Lie algebra $\lie{n}$ is decomposed as the direct sum $\lie{n}_1 \oplus \dots \oplus \lie{n}_r$ of subspaces, called layers, which form a gradation of $\lie{n}$ and such that the first layer $\lie{n}_1$ generates $\lie{n}$ as a Lie algebra. Let $\breve{X}_1,\dots,\breve{X}_d$ be left-invariant vector fields forming a basis of the first layer of the Lie algebra of $N$; equipped with the sub-Riemannian structure induced by the vector fields $\breve{X}_1,\dots,\breve{X}_d$, the group $N$ is also known as a Carnot group. Let
\begin{equation*}
\Lapl = -\sum_{j=1}^{d} \breve{X}_j^2
\end{equation*}
be the corresponding sub-Laplacian on $N$. Beside being left-invariant, the sub-Laplacian $\Lapl$ is homogeneous with respect to the automorphic dilations on $N$ induced by the gradation of $\lie{n}$.

We consider the semidirect product group $G=N\rtimes \RR$, where $\RR$ acts on $N$ via automorphic dilations. The group $G$ is solvable, amenable and of exponential growth. We equip $G$ with the right Haar measure. Moreover, we lift the vector fields $\breve{X}_j$ on $N$ to the left-invariant vector fields $X_j$ on $G$; together with the left-invariant vector field $X_0$ in the direction of $\RR$, they generate the Lie algebra of $G$ and define a sub-Riemannian structure on the manifold $G$. The associated left-invariant sub-Laplacian on $G$ is given by
\begin{equation}\label{eq:subLaplacian}
\Delta= -\sum_{j=0}^{d} X_j^2.
\end{equation}
The operator $\Delta$, initially defined on $C^\infty_c(G)$, extends uniquely to a positive self-adjoint operator on $L^2(G)$. 

In this work, we consider the first-order Riesz transforms associated with $\Delta$ on $G$, given by
\begin{equation*}
\Riesz_j = X_j \Delta^{-1/2},\qquad j=0,\ldots,d.
\end{equation*}
In \cite{MaVa} Vallarino and the first-named author proved that the operators $\Riesz_j$ are of weak type $(1,1)$ and bounded on $L^p(G)$ for all $p\in(1,2]$. Here we extend that result to the full range of $p$. 

\begin{thm}\label{thm:main}
The Riesz transforms $\Riesz_j$, $j=0,\ldots,d$, are bounded on $L^p(G)$ for all $p\in(1,\infty)$.
\end{thm}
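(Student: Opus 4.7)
The plan is that, since $L^p$-boundedness for $p\in(1,2]$ has already been established in \cite{MaVa}, it suffices to prove the bound for the remaining range $p\in(2,\infty)$, and we shall do this by means of the operator-valued spectral multiplier theorem recently obtained by the authors, applied to the Carnot-group sub-Laplacian $\Lapl$ lifted to $G$.

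Using the semidirect product structure $G = N\rtimes\RR$, identify $L^p(G)$ (with the right Haar measure) with the Bochner space $L^p(N;L^p(\RR))$; under this identification, the sub-Laplacian on $G$ takes the form $\Delta = -\partial_t^2 + a(t)^2\Lapl$, where $\Lapl$ acts in the $N$-variable, $t$ is the coordinate on $\RR$, and $a(t)=e^t$ up to orientation conventions. Since $\Lapl$ and $\partial_t^2$ commute, $\Lapl$ and $\Delta$ commute, so joint functional calculus applies; in the spectral decomposition of $\Lapl$, the operator $\Delta^{-1/2}$ acts fibrewise as the inverse square root of the one-dimensional fibre operator on $L^p(\RR)$,
\begin{equation*}
\Delta_\lambda = -\partial_t^2 + \lambda\, a(t)^2 \qquad (\lambda\geq 0).
\end{equation*}
In particular, $\Riesz_0 = X_0\Delta^{-1/2}$ corresponds to the operator-valued spectral multiplier of $\Lapl$ with symbol $\lambda\mapsto\partial_t\Delta_\lambda^{-1/2}$. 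For $j=1,\dots,d$, the identity $X_j = a(t)\breve{X}_j$ (up to sign) suggests the factorisation
\begin{equation*}
\Riesz_j = \bigl(\breve{X}_j\Lapl^{-1/2}\bigr)\cdot m_j(\Lapl), \qquad m_j(\lambda) = a(t)\sqrt{\lambda}\,\Delta_\lambda^{-1/2},
\end{equation*}
in which $\breve{X}_j\Lapl^{-1/2}$, acting only in the $N$-variable, is the classical horizontal Riesz transform on the Carnot group $N$, $L^p$-bounded for every $p\in(1,\infty)$; while $m_j(\Lapl)$ is an operator-valued spectral multiplier of $\Lapl$ whose symbol takes values in (densely-defined) operators on $L^p(\RR)$.

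The task then reduces to verifying the hypotheses of the operator-valued spectral multiplier theorem for the symbols $\lambda\mapsto\partial_t\Delta_\lambda^{-1/2}$ and $\lambda\mapsto a(t)\sqrt{\lambda}\,\Delta_\lambda^{-1/2}$, namely Hörmander-type Sobolev bounds on these $B(L^p(\RR))$-valued functions of $\lambda$. For this, one exploits that $\Delta_\lambda$ is a one-dimensional Schrödinger operator with an exponential potential: the change of variable $u=\sqrt{\lambda}\,a(t)$ converts $\Delta_\lambda$ into a modified Bessel-type operator on the half-line with imaginary index, so that the integral kernels of $\Delta_\lambda^{-1/2}$ and of its $\lambda$-derivatives admit explicit expressions as integrals involving products of the modified Bessel functions $I_\nu$ and $K_\nu$ and their derivatives.

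The principal obstacle, and the heart of the proof, is the derivation of sharp size and decay estimates for these products of modified Bessel functions and their derivatives, uniform in the parameters and strong enough that after integration against the relevant kernels the operator norms on $L^p(\RR)$ satisfy the decay in $\lambda$ required by the Hörmander-type hypotheses of the multiplier theorem. This is precisely the Bessel function analysis announced in the abstract, and is expected to absorb the bulk of the computational work.
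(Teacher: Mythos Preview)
Your overall strategy matches the paper's: express the Riesz transforms as operator-valued spectral multipliers of $\sqrt{\Lapl}$, factor $\Riesz_j$ for $j\geq 1$ through the Carnot-group Riesz transforms $\breve{X}_j\Lapl^{-1/2}$, and reduce to Bessel-function estimates coming from the explicit resolvent kernel of the one-dimensional Schr\"odinger operator $H(\xi)=-\partial_u^2+\xi^2 e^{2u}$. However, your description of the multiplier-theorem hypothesis is off. The theorem from \cite{MaPl2} does not ask for ``H\"ormander-type Sobolev bounds'' or any ``decay in $\lambda$'' of $B(L^p(\RR))$-norms; it requires that the homogeneous derivatives $(\xi\partial_\xi)^n M(\xi)$ be \emph{uniformly} bounded on $L^2(w)$ for every Muckenhoupt weight $w\in A_2(\RR)$, with a bound depending only on $n$ and $[w]_{A_2}$. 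Because the symbols satisfy $M(\xi)=T_{\log\xi}M(1)T_{-\log\xi}$, the $\xi$-dependence is trivial and all the work sits in the $A_2$-weighted estimates at $\xi=1$; there is no decay to extract.

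Two further reductions that you omit are genuinely needed. First, the paper does not apply the multiplier theorem to the full symbols $\partial_u H(\xi)^{-1/2}$ and $\xi e^u H(\xi)^{-1/2}$: it splits the subordination integral $\int_0^\infty(t^2+\Delta)^{-1}\,dt$ at $t=1/2$, handles the local part by heat-kernel estimates and the Calder\'on--Zygmund theory on $G$, and only feeds the part at infinity into the multiplier theorem. This restriction to $t\in(0,1/2)$ is what makes the Bessel estimates (especially the small-argument asymptotics for $K_t$) tractable. Second, and more substantively, the $p\leq 2$ result from \cite{MaVa} is not merely ``already done'' but is used as an \emph{input} for $j=0$: since $\Riesz_0^\infty$ is bounded on $L^p$ for $p\in(1,2]$, its adjoint is bounded for $p\in[2,\infty)$, so it suffices to treat the antisymmetric combination $\widetilde{\Riesz}_0^\infty=\Riesz_0^\infty-(\Riesz_0^\infty)^*$, whose symbol has kernel $(\partial_u-\partial_v)\Rsl_t(u,v)$. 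Even this antisymmetrized kernel $S_0^0$ is worse than the kernels $S_j^n$ for $(n,j)\neq(0,0)$ and requires a further decomposition into a standard Calder\'on--Zygmund piece plus a good remainder to obtain the $A_2$-weighted bound. Attacking $\partial_u H(\xi)^{-1/2}$ directly, as you propose, would hit these obstacles without the antisymmetrization.
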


By linearity, Theorem \ref{thm:main} is equivalent to the $L^p$-boundedness for $p \in (1,\infty)$ of the Riesz transforms $X \Delta^{-1/2}$ for all horizontal left-invariant vector fields $X$ on $G$, i.e., for all $X$ in the linear span of $X_0,\dots,X_d$. Moreover, by duality and composition, Theorem \ref{thm:main} also implies the $L^p$-boundedness for $p\in (1,\infty)$ of the second-order Riesz transforms $\Riesz_k^* \Riesz_j = X_k \Delta^{-1} X_j$ for all $j,k=0,\dots,d$. The latter boundedness result was already proved in \cite{MaVa}, by exploiting an additional cancellation coming from the composition: indeed, as it turns out, the convolution kernel of $\Riesz_k^* \Riesz_j$ is integrable at infinity. However, this integrabiliy property does not hold for the first-order Riesz transforms $\Riesz_j$.

The study of the $L^p$-boundedness of Riesz transforms associated to Laplacians and sub-Laplacians on Lie groups and more general manifolds is a classical problem in harmonic and geometric analysis. 
We shall not attempt to give a complete account of its long history, and we shall limit our discussion to the works that are most relevant to our problem.
Particularly studied is the case where the underlying manifold is doubling (see, e.g., \cite{ACDH,BG,CouDu,tElRoSi,LoVa,Si3}); however, our group $G$ has exponential volume growth, so it does not fall into this framework, and indeed the main challenge in the study of the Riesz transforms $\Riesz_j$ comes from their behaviour at infinity. In these regards, other works in the literature (see, e.g., \cite{ACDH,Ba,Li,LoMu2}) consider ``shifted'' Riesz transforms of the form $X_j (a+\Delta)^{-1/2}$ for some $a>0$, or exploit the spectral gap of the relevant Laplacian, in order to gain some ``extra decay'' at infinity; however, the sub-Laplacian $\Delta$ on $G$ has no spectral gap and the Riesz transforms $\Riesz_j = X_j \Delta^{-1/2}$ are unshifted, so a different approach is needed.

When $N$ is abelian (i.e., $r=1$), then, up to isomorphisms, $N = \RR^d$ and $\Lapl$ is the standard Euclidean Laplacian. In this case, $\Delta$ is in fact a full (elliptic) Laplacian on the $ax+b$ group $G = \RR^d \rtimes \RR$, and Theorem \ref{thm:main} reduces to known results.
When $d=1$ Gaudry and Sj\"ogren \cite{GaSj,Sj'99} proved that $\Riesz_1$ is bounded on $L^p(G)$ for $p \in (1,\infty)$, but left open the analogous problem for $\Riesz_0$. For any dimension $d\geq 1$, Hebisch and Steger \cite{HeSt} proved that each Riesz transform $\Riesz_j$ ($j=0,\dots,d$) is bounded on $L^p(G)$ in the restricted range $p\in(1,2]$. The case $p>2$ has been fully settled only recently by the first-named author \cite{Ma23}.
The fact that the $L^p$-boundedness of Riesz transforms for $p>2$ is a more delicate question than that for $p<2$ is not surprising and has also been noticed in other settings (see, e.g., \cite{ACDH,CouDu}).

One of the crucial contributions in \cite{HeSt} is the development of a Calder\'on--Zygmund theory adapted to the nondoubling geometry of the group $G = \RR^d \rtimes \RR$. Once this theory is established, the $L^p$-boundedness for $p \in (1,2]$ of the Riesz transforms $\Riesz_j$ can be reduced via relatively standard arguments to suitable uniform-in-time weighted $L^1$-bounds for the heat kernel and its gradient; in the case of $G = \RR^d \rtimes \RR$, the latter bounds are proved in \cite{HeSt} by exploiting the explicit formulas that are available for the heat kernel in that setting. The approach of \cite{HeSt} also yields a weak type $(1,1)$ endpoint bound for the $\Riesz_j$. An additional endpoint bound is provided by the Hardy space theory on $G$ developed in \cite{Va-PhD,Va}, yielding the $H^1(G) \to L^1(G)$ boundedness of the singular integral operators encompassed by the theory of \cite{HeSt}, including the Riesz transforms $\Riesz_j.$

Interestingly enough, the adjoint Riesz transforms $\Riesz_j^*$ are \emph{not} bounded from $H^1(G)$ to $L^1(G)$; see \cite[Theorems~4.2 and 5.2]{SjVa} and \cite[Proposition~7.1]{Ma23}. This shows that the $p>2$ bounds for the $\Riesz_j$ cannot be simply obtained via duality considerations by means of the same approach used for $p<2$, and a different approach is needed. In \cite{Ma23}, the proof of Riesz transform bounds for $p>2$ is crucially based on the observation that $L^p(\RR^d \rtimes \RR) \cong L^p(\RR^d;L^p(\RR))$, and moreover left-invariant operators on $G = \RR^d \rtimes \RR$ are also translation-invariant with respect to $\RR^d$-translations; thus, such operators can be thought of as operator-valued Fourier multipliers on $\RR^d$, where their operator-valued symbols act on functions on $\RR$, and their $L^p$-boundedness properties can be obtained by means of the operator-valued Fourier multiplier theorem of \cite{StWe,We}. In order to apply the operator-valued multiplier theorem to the Riesz transforms $\Riesz_j$, precise asymptotics for the convolution kernels of the $\Riesz_j$ at the origin and at infinity are proved in \cite{Ma23}, which again make fundamental use of explicit formulas for the heat kernel.

The lack of comparatively explicit formulas and asymptotics in the case of $G = N \rtimes \RR$ for nonabelian $N$ is one of the main issues that must be faced in extending the aforementioned results, thus forcing one to develop more ``conceptual'' approaches. This is already true in \cite{MOV,MaVa}: in those works, the Calder\'on--Zygmund and Hardy space theory of \cite{HeSt,Va-PhD} is extended to the case of $G=N \rtimes \RR$ for an arbitrary stratified Lie group $N$, and moreover uniform-in-time gradient heat kernel bounds for the sub-Laplacian $\Delta$ are proved, which yield the $L^p$-boundedness for $p \in (1,2]$ of the Riesz transforms $\Riesz_j$. Among other things, as no explicit formulas for the heat kernel on $G$ are available, the proof of the heat kernel bounds in \cite{MOV,MaVa} requires a different method, exploiting the relation between the heat kernels on $G$ and $N$ \cite{G}.

Beside the lack of explicit formulas, an additional ``methodological'' obstacle appears when trying to extend to $G =N \rtimes \RR$ for nonabelian $N$ the approach for the $p>2$ bounds from \cite{Ma23}. Indeed, in the case of an arbitrary stratified Lie group $N$, it is still true that $L^p(G) \cong L^p(N;L^p(\RR))$, and that left-invariant operators on $G$ are also left-invariant with respect to $N$-translations.
However, an operator-valued Fourier multiplier theorem on an arbitrary stratified group $N$, analogous to that of \cite{StWe,We} for $\RR^d$, is not available in the literature. As a matter of fact, it is not even clear what such an analogue would be exactly, given the complicated nature of the group Fourier transform and $L^p$ Fourier multiplier theorems on a nonabelian stratified group $N$, cf.\ \cite{DeM,FR,Lin}.

A similar problem was faced in \cite{MaPl2}, where we studied the $L^p$-boundedness of Riesz transforms on ``$ax+b$ hypergroups'' (an analogue of the $ax+b$ groups $\RR^d \rtimes \RR$ in the case where $d$ is fractional, see also \cite[Section 4]{MaPl}). There we proved an operator-valued spectral multiplier theorem, which can be used as a replacement of \cite{StWe,We}, and it is that result that we shall apply here as well.
The operator-valued multiplier theorem in \cite{MaPl2} is a general result, which, in a nutshell, shows the existence of an operator-valued functional calculus for any self-adjoint operator with a scalar-valued H\"ormander functional calculus on $L^p$; the latter property is satisfied by the sub-Laplacian $\Lapl$ on $N$, thanks to the classical result by Christ \cite{Ch} and Mauceri and Meda \cite{MaMe}.

To see why the operator-valued functional calculus for $\Lapl$ is relevant to the Riesz transforms on $G$, observe that, in suitable coordinates on $G = N \rtimes \RR$,
\begin{gather}
\label{eq:vfsG} X_0 = \partial_u, \quad X_j = e^u \breve{X}_j \ (j=1,\dots,d), \\
\label{eq:DeltaLapl} \Delta = -\partial_u^2+e^{2u} \Lapl,
\end{gather}
where $u$ denotes the coordinate along $\RR$. Thus, at least formally, we can write
\[
\Riesz_0 = X_0 \Delta^{-1/2} = F_0(\sqrt{\Lapl}), \qquad F_0(\xi) \defeq \partial_u H(\xi)^{-1/2},
\]
where $H(\xi) \defeq -\partial_u^2+\xi^2 e^{2u}$ is the Schr\"odinger operator with potential $\xi^2 e^{2u}$ on $\RR$ for any $\xi >0$; in a similar way, for $j>0$,
\[
\Riesz_j = X_j \Delta^{-1/2} = \Riesz_j^N F_1(\sqrt{\Lapl}), \qquad F_1(\xi) \defeq \xi e^u H(\xi)^{-1/2},
\]
where the $\Riesz_j^N \defeq \breve{X}_j \Lapl^{-1/2}$ are the Riesz transforms on $N$, which are known to be $L^p$-bounded for $p \in (1,\infty)$ \cite{ChGe,tElRoSi,Fo75,LoVa}. Thus, the $L^p$-boundedness of the Riesz transforms $\Riesz_j$ on $G$ can be reduced to that of the operator-valued functions $F_j(\sqrt{\Lapl})$ of the homogeneous sub-Laplacian $\Lapl$ on $N$.

Notice that the $F_j(\xi)$ are nothing else than the first-order Riesz transforms associated with the Schr\"odinger operator $H(\xi)$ on $\RR$. The operator-valued multiplier theorem of \cite{MaPl2} reduces the $L^p(G)$-boundedness of $F_j(\sqrt{\Lapl})$ for $p \in (1,\infty)$ to checking that the symbols $F_j(\xi)$ satisfy assumptions of Mihlin type of the form
\begin{equation}\label{eq:wL2bd}
\sup_{\xi > 0} \|\xi^n \partial_\xi^n F_j(\xi)\|_{L^2(w) \to L^2(w)} \lesssim_{n,[w]_2} 1
\end{equation}
for all $n \in \NN$ and all weights $w$ in the Muckenhoupt class $A_2(\RR)$, where $[w]_2$ is the $A_2$-characteristic of the weight. By Rubio de Francia's extrapolation theorem, for any $p \in (1,\infty)$ the bound \eqref{eq:wL2bd} is equivalent to the bound
\[
\sup_{\xi > 0} \|\xi^n \partial_\xi^n F_j(\xi)\|_{L^p(w) \to L^p(w)} \lesssim_{n,[w]_p} 1
\]
for all $n \in \NN$ and $w \in A_p(\RR)$; in particular, by taking $n = 0$ and $w \equiv 1$ in the latter bound, we see that the condition \eqref{eq:wL2bd} can be thought of as a strengthening of the $L^p(\RR)$-boundedness of the Riesz transforms $F_j(\xi)$ associated with $H(\xi)$.

Interestingly enough, the (unweighted) $L^p(\RR)$-boundedness of the $F_j(\xi)$ for all $p \in (1,\infty)$ was proved in \cite{Ma23}, by means of transference, as a consequence of the $L^p(G)$-boundedness of the $\Riesz_j$ on $G = \RR^d \rtimes \RR$; notice that the new result there is again for $p>2$, as the $L^p$-boundedness for $p \in (1,2]$ of first-order Riesz transforms associated with Schr\"odinger operator with nonnegative potentials is known in great generality \cite{Si3}. Here the direction is somewhat reversed, in the sense that we deduce the $L^p(G)$-boundedness of the $\Riesz_j$ from (a substantially strengthened version of) the $L^p(\RR)$-boundedness of the $F_j(\xi)$.

The crucial advantage of the approach presented here is that it applies to any $G = N \rtimes \RR$ for an arbitrary stratified group $N$; as a matter of fact, this approach would appear to have the potential for extensions to other settings, where the relevant Laplacian $\Delta$ has the form \eqref{eq:DeltaLapl} for a suitable operator $\Lapl$ (cf.\ the setup of \cite{G}).
Indeed, differently from \cite{Ma23}, here we do not need any explicit formula or asymptotic expression for the convolution kernels of the $\Riesz_j$ on $G$, and instead we rely on the functional calculus for the Schr\"odinger operators $H(\xi)$ on $\RR$. 
Specifically, by means of the subordination formula
\begin{equation}\label{eq:subHxi}
H(\xi)^{-1/2} = \frac{2}{\pi} \int_0^\infty (t^2+H(\xi))^{-1} \,dt,
\end{equation}
the study of the $F_j(\xi)$ is reduced to that of the resolvent operators $(t^2+H(\xi))^{-1}$; moreover, explicit formulas for the integral kernels of these resolvents is available, in terms of products of modified Bessel functions \cite{Hu,Ti}:
\[
(t^2+H(\xi))^{-1} f(u) = \int_\RR I_t(\xi e^{\min(u,v)}) K_t(\xi e^{\max(u,v)}) f(v) \,dv.
\]
Thus, our result reduces eventually to certain pointwise estimates for products of modified Bessel functions and derivatives thereof.

For technical reasons, in the proof below we actually split each of the Riesz transforms $\Riesz_j$ into a ``local part'' and a ``part at infinity'', corresponding to an appropriate splitting of the subordination integral \eqref{eq:subHxi}. The local part can be treated in a more traditional way, by means of heat kernel estimates on $G$, and it is the part at infinity that shall be tackled by means of the operator-valued multiplier theorem. Thus, we shall actually check the conditions \eqref{eq:wL2bd} for certain variants of the symbols $F_j(\xi)$, which only involve Bessel functions $I_t$ and $K_t$ of small order $t$.

Our approach, by its nature, does not provide endpoint bounds for the Riesz transforms. At the $p=1$ endpoint, we already know from \cite{MaVa} that the $\Riesz_j$ are of weak type $(1,1)$ and bounded from $H^1(G)$ to $L^1(G)$; see also the recent work \cite{SjVa2}, where a sharper $H^1(G) \to H^1(G)$ bound is proved for $\Riesz_1,\Riesz_2$ in the case of $N = \RR^2$.
At the other endpoint, we know that the $H^1(G) \to L^1(G)$ boundedness fails for the adjoint transforms $\Riesz_j^*$ when $N$ is abelian \cite{Ma23,SjVa}, and even in the hypergroup setting \cite{MaPl2} or in the discrete setting of flow trees \cite{LMSTV,MSTV}, so one may expect it to fail for nonabelian $N$ as well. On the other hand, the validity of a weak type $(1,1)$ bound for the $\Riesz_j^*$ appears to be a subtler problem, which is not fully understood even in the case of $N$ abelian: indeed, when $N = \RR^d$, weak type $(1,1)$ bounds for the $\Riesz_j^*$ are known to hold in the case $j>0$ \cite{GaSj,Ma23}, but the question for $j=0$ remains open; moreover, the proofs of the known bounds make fundamental use of explicit formulas and asymptotics for the Riesz transform kernels. So new methods and ideas will likely be needed to tackle this problem for the Riesz transforms on $G = N \rtimes \RR$ for an arbitrary stratified group $N$.

\subsection*{Structure of the paper}
In Section \ref{S:Preliminaries} we recall some basic facts about stratified Lie groups and their semidirect extensions. Among other things, we summarize a few results from the singular integral theory on $G$; moreover, we state some useful estimates for the heat kernels on $G$ and $N$, as well as a formula connecting them.

In Section \ref{S:Riesz_tr} we set up our study of the Riesz transforms on $G$.
Specifically, we decompose each $\Riesz_j$ into a local part and a part at infinity, and prove the $L^p$-boundedness of the local parts by means of heat kernel estimates. We then recall the operator-valued spectral multiplier theorem from \cite{MaPl2}, and show how the $L^p$-boundedness of the parts at infinity of the Riesz transforms reduces to weighted $L^2$-bounds of homogeneous derivatives of certain operator-valued symbols.

The required weighted $L^2$-bounds are finally proved in Section \ref{S:M_j-estimates}, by means of explicit formulas for the integral kernels of the relevant operators, involving modified Bessel functions. Section \ref{S:Appendix} is an appendix in which we collect the estimates for Bessel functions that we need in the proof.

\subsection*{Notation}
The symbols $\RR$, $\ZZ$, and $\CC$ have standard meanings. For the set of all positive and nonnegative integers we write $\NN_+$ and $\NN=\NN_+\cup\{0\}$, respectively. The closed and open half-lines are denoted by $\Rnon=[0,\infty)$ and $\Rpos=(0,\infty)$.

We write $\LinBnd(V,W)$ for the space of bounded linear operators between two topological vector spaces $V$ and $W$, and abbreviate $\LinBnd(V,V)$ as $\LinBnd(V)$. Moreover, we use the notation $\iker_T$ for the integral kernel of an integral operator $T$.

For nonnegative quantities $X,Y$ we write $X\lesssim Y$ if there exists $c\in\Rpos$ such that $X\leq c Y$. We write $X\simeq Y$ if $X\lesssim Y$ and $Y\lesssim X$ simultaneously. If we want to emphasize that the implicit constant $c$ depends on a parameter $a$ we use subscripted variants such as $\lesssim_a$ and $\simeq_a$.

\section{Preliminaries}\label{S:Preliminaries}

In this section we recall a number of important properties of the group $G$ and the sub-Laplacian $\Delta$; we refer to \cite{MOV,MaVa} for a more extensive discussion.

\subsection{Stratified groups and their solvable extension}

As in the introduction, let $N$ be a stratified Lie group of step $r\in\NN_+$. Thus $N$ is a simply connected, connected Lie group, whose Lie algebra $\lie{n}$ is equipped with a derivation $D$ such that the eigenspace of $D$ corresponding to the eigenvalue $1$ generates $\lie{n}$ as a Lie algebra. The set of all eigenvalues of $D$ is $\{1,\ldots,r\}$, and the eigenspace associated to the eigenvalue $j$ is called $j$th layer and denoted by $\lie{n}_j$ for $j=1,\ldots,r$. Moreover, $\lie{n}$ is the direct sum of the layers $\lie{n}_j$, and $N$ is nilpotent.

We identify $N$ with $\lie{n}$ through the exponential map; via this identification, Lebesgue measure on $\lie{n}$ is a (left and right) Haar measure on $N$.
We denote by $(\delta_t)_{t>0}$ the family of automorphic dilations on $N$ given by $\delta_t=\exp((\log t)D)$.
Notice that $\det \delta_t = t^Q$, where $Q=\tr D= \dim\lie{n}_1+2\dim\lie{n}_2+\ldots+r \dim\lie{n}_r$ is called the homogeneous dimension of $N$.

We define the semidirect product $G=N\rtimes\RR$, where the additive group $\RR$ acts on $N$ via $u \mapsto \delta_{e^u}$. Usually we will write the elements of $G$ as pairs $(x,u)$ where $x\in N$ and $u\in \RR$, although sometimes in order to make the formulas more transparent we will use the bold notation $\bdx\in G$. The product law and the inversion map on $G$ are given by
\begin{equation*}
(x,u) \cdot (x',u') = \big(x\cdot \delta_{e^u}(z'),u+u'\big),\qquad (x,u)^{-1} = \big( -\delta_{e^{-u}}(z),-u\big),
\end{equation*}
and the identity element is $0_G = (0_N,0)$.

The group $G$ is a solvable Lie group. Unlike $N$, the group $G$ is not unimodular. The right and left Haar measure are
\begin{equation*}
\dd x\, \dd u\qquad \text{and} \qquad m(x,u)\, \dd x\,\dd u
\end{equation*}
respectively, where $m(x,u)=e^{-Qu}$ is the modular function. Notably, $G$ has exponential volume growth. Throughout the paper we will always use the right Haar measure to define the Lebegue spaces $L^p(G)$, $p\in [1,\infty)$.

Let us choose left-invariant vector fields $\breve{X}_1,\dots,\breve{X}_d$ on $N$ that form a basis of the first layer $\lie{n}_1$, and define the left-invariant vector fields $X_0,\dots,X_d$ on $G$ as in \eqref{eq:vfsG}. The systems $\{\breve{X}_j\}_{j=1,\dots,d}$ and $\{X_j\}_{j=0,\dots,d}$ are bracket-generating on $N$ and $G$ respectively; let $\dist_N$ and $\dist$ denote the corresponding Carnot--Carath\'eodory distances on $N$ and $G$. We write $|\bdx|_\dist = \dist(\bdx,0_G)$ for the distance of $\bdx\in G$ from the origin; similarly $|x|_{\dist_N}=\dist_N(x,0_N)$ stands for the distance of $x\in N$ from the origin. The following relation between $\dist$ and $\dist_N$ was proved in \cite[Proposition~2.7]{MOV} (see also \cite{He99}):
\begin{equation*}
|(x,u)|_\dist = \arccosh \left( \cosh u + \frac{|x|_{\dist_N}^2}{2e^u}\right),\qquad (x,u)\in G.
\end{equation*}

The convolution on $G$ is given by
\begin{equation*}
f\ast g(\bdx) = \int_G f(\bdx\bdy^{-1}) \, g(\bdy) \,\dd\bdy
\end{equation*}
and the $L^1$-isometric involution by
\begin{equation*}
f^\ast(\bdx) = m(\bdx) \overline{f(\bdx^{-1})}.
\end{equation*}
Of course, the above definitions make sense as written for sufficiently well-behaved functions $f$ and $g$ on $G$, but can be extended to more general classes of functions and distributions on $G$.

Let $\Df'(G)$ denote the space of distributions on $G$. By the Schwartz kernel theorem, any bounded left-invariant operator $T : C^\infty_c(G) \to \Df'(G)$ is a (right) convolution operator, and we shall denote by $K_T \in \Df'(G)$ its convolution kernel, i.e.,
\[
Tf = f * K_T \qquad \forall f \in C^\infty_c(G).
\]
An analogous result holds for left-invariant operators on $N$.

\subsection{Calder\'on--Zygmund theory}

It was proved in \cite{MOV} that $G$ equipped with the distance $\dist$ and the right Haar measure is an abstract Calder\'on--Zygmund space in the sense of Hebisch and Steger \cite{HeSt}, and that moreover an atomic Hardy space $H^1(G)$ adapted to this structure, in the sense of \cite{Va-PhD}, can be defined on $G$. This generalisation of the classical Calder\'on--Zygmund and Hardy space theory allows us to treat singular integrals on the exponentially growing group $G$.

We recall a version of the $L^p$-boundedness theorem for singular integrals proved by Hebisch and Steger \cite[Theorem~1.2]{HeSt} and Vallarino \cite[Theorem 3.10]{Va-PhD}. The statement below (analogous to \cite[Theorem~2.3]{MaVa}) is adapted to convolution operators and also includes the $H^1 \to L^1$ endpoint.
 
\begin{thm}\label{thm:CZ}
Let $T$ be a linear operator bounded on $L^2(G)$. Assume that
\[
\langle T f, g \rangle =\sum_{n\in\ZZ}\langle T_n f , g \rangle
\]
for all compactly supported $f,g \in L^2(G)$ with disjoint supports, where the $T_n$ are convolution operators associated with kernels $K_n$ that satisfy, for certain $c,B,\varepsilon>0$ with $c\neq 1$, the conditions
	\begin{align*}
	\int_{G} \big| K_n(\bdz)\big| \big(1+c^n|\bdz|_\dist \big)^\varepsilon\, \dd \bdz&\leq B\\
	\int_G \big| X_j K_n^\ast (\bdz)\big| \, \dd\bdz &\leq B c^n,\qquad j=0,\ldots,d.
	\end{align*}
	Then $T$ is of weak type $(1,1)$, bounded on $L^p(G)$ for $p\in(1,2]$, and bounded from $H^1(G)$ to $L^1(G)$. 
\end{thm}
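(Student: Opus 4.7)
The plan is to deduce the theorem from the abstract singular integral theorems of Hebisch--Steger \cite[Theorem~1.2]{HeSt} and Vallarino \cite[Theorem~3.10]{Va-PhD}, applied to the nondoubling metric measure space $(G,\dist,\dd\bdx\,\dd u)$: this space was shown in \cite{MOV} to be an abstract Calder\'on--Zygmund space in the sense of \cite{HeSt}, and the atomic Hardy space $H^1(G)$ adapted to it, in the sense of \cite{Va-PhD}, was defined there as well. Thus, once the usual integral H\"ormander-type condition
\begin{equation*}
\sup_{B}\sup_{\bdy\in B}\int_{\bdx\notin B^\ast}|\iker_T(\bdx,\bdy)-\iker_T(\bdx,\bdy_B)|\,\dd\bdx<\infty
\end{equation*}
(where $B$ ranges over all $\dist$-balls, $\bdy_B$ denotes the centre of $B$, and $B^\ast$ is a suitable dilated ball) is verified for the integral kernel of $T=\sum_n T_n$, the three conclusions follow directly from those abstract theorems.

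To verify that condition, the idea is to use the two hypotheses as complementary tools at different scales. For each $n$, $\iker_{T_n}(\bdx,\bdy)$ is essentially $K_n(\bdx\bdy^{-1})$ (up to modular-function factors from the right-Haar change of variables), and a short computation using the definition of the involution $K_n^\ast(\bdz)=m(\bdz)\overline{K_n(\bdz^{-1})}$ identifies $\sum_{j=0}^{d}\int_G|X_jK_n^\ast|\,\dd\bdz$ as controlling the variation of $\iker_{T_n}(\bdx,\bdy)$ along a horizontal curve in $\bdy$; thus, whenever $\dist(\bdy,\bdy_B)\lesssim R$, the second hypothesis yields
\begin{equation*}
\int_G|\iker_{T_n}(\bdx,\bdy)-\iker_{T_n}(\bdx,\bdy_B)|\,\dd\bdx\lesssim BRc^n.
\end{equation*}
Given a ball $B$ of radius $R$, I would split $\sum_{n\in\ZZ}$ according to whether $c^{-n}\gtrsim R$ or $c^{-n}\lesssim R$. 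In the first regime (kernel scale larger than $R$), the gradient estimate above gives the bound $BRc^n$, and the corresponding series is geometric and convergent. In the second regime (kernel scale smaller than $R$), the gradient is too expensive; instead, one estimates $|\iker_{T_n}(\bdx,\bdy)|$ and $|\iker_{T_n}(\bdx,\bdy_B)|$ separately, using that $\bdx\notin B^\ast$ forces $|\bdx\bdy^{-1}|_\dist\gtrsim R$, so that the weight $(1+c^n|\bdz|_\dist)^\varepsilon$ in the first hypothesis provides a factor $(c^nR)^{-\varepsilon}$ that makes the resulting series geometrically convergent as well. The hypothesis $c\neq 1$ is precisely what guarantees that both series converge.

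The main obstacle is the careful bookkeeping required to translate the convolution-kernel hypotheses (phrased via $K_n^\ast$ and $X_jK_n^\ast$) into the integral-kernel H\"ormander estimate: one must track the modular function and the interplay between the involution, the right Haar measure and the left-invariant vector fields in order to rigorously justify the ``gradient'' bound used above, and to verify that the integration over horizontal curves joining $\bdy$ to $\bdy_B$ inside $B$ does produce exactly $\sum_j\int|X_jK_n^\ast|$ after Fubini. Once this is done, the scale-splitting argument is routine, and the $H^1(G)\to L^1(G)$ endpoint follows by running the analogous atomic argument of \cite{Va-PhD}, using cancellation of each atom $a$ supported in $B$ together with the H\"ormander bound on $B^\ast$ and the $L^1$-control of the kernel outside $B^\ast$ provided by the first hypothesis.
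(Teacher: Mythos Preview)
The paper does not supply its own proof of this theorem: it is quoted from the literature, namely \cite[Theorem~1.2]{HeSt} and \cite[Theorem~3.10]{Va-PhD}, with the convolution-operator reformulation taken from \cite[Theorem~2.3]{MaVa}. Your proposal correctly identifies these sources and the overall strategy.

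One clarification, though: the abstract Hebisch--Steger theorem is \emph{not} stated in terms of a global H\"ormander integral condition on $\iker_T$; its hypotheses are already ``piecewise'' conditions on the $\iker_{T_n}$ (a weighted size bound and a smoothness bound for each $n$), and the scale-splitting argument you sketch is part of the \emph{proof} of that abstract theorem rather than a verification one performs in order to apply it. What actually needs to be checked to invoke \cite{HeSt,Va-PhD} here is precisely the ``bookkeeping'' you describe at the end: showing that, for convolution operators with respect to the right Haar measure, the gradient bound $\int_G |X_j K_n^\ast|\,\dd\bdz \le B c^n$ implies the required piecewise smoothness $\int |\iker_{T_n}(\bdx,\bdy)-\iker_{T_n}(\bdx,\bdy')|\,\dd\bdx \lesssim B c^n \dist(\bdy,\bdy')$, via the identity relating $\iker_{T_n}(\bdx,\bdy)$ to $K_n^\ast$ through the modular function and integration along horizontal curves. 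This translation is exactly what \cite[Theorem~2.3]{MaVa} records, so once you cite it the theorem follows immediately.
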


\subsection{Heat kernels}

Let $\{e^{-t\Delta} \}_{t>0}$ denote the heat semigroup associated with $\Delta$. For each $t>0$ the operator $e^{-t\Delta}$ is a convolution operator and the heat kernel $K_{e^{-t\Delta}}$ is expressed in terms of the heat kernel $K_{e^{-t\Lapl}}$ associated with the sub-Laplacian $\Lapl$ by 
\begin{equation*}
K_{e^{-t\Delta}}(x,u) = \int_0^\infty \psi_t(\zeta) \exp\left(-\frac{\cosh u}{\zeta}\right) K_{e^{-\frac{e^u \zeta}{2}\Lapl}}(x)\,\dd\zeta,
\end{equation*}
see \cite[\S2]{G}, where
\begin{equation}\label{eq:psi}
\psi_t(\zeta) =\frac{e^{\frac{\pi^2}{4t}}}{\zeta^2\sqrt{4\pi^3t}} \int_0^\infty \sinh\theta \sin\frac{\pi\theta}{2t} \exp\left( -\frac{\theta^2}{4t}- \frac{\cosh\theta}{\zeta}\right)\,\dd\theta.
\end{equation}
Note that $|\psi_t(\zeta)|\lesssim_t \zeta^{-2}$ for $\zeta>0$.

In the next proposition we gather some heat kernel estimates proved in \cite{MaVa}; see also \cite[Proposition~4.2]{MOV}. To state them, we recall the noncommutative multiindex notation
\[
X^\alpha = X_{\alpha_1} \cdots X_{\alpha_N} , \qquad |\alpha| = N,
\]
for $\alpha = (\alpha_1,\dots,\alpha_N) \in \{0,\dots,d\}^N$ and $N \in \NN$.

\begin{prop}[{\cite[Propositions~3.1 and 3.5]{MaVa}}]\label{prop:MaVa}
If $\varepsilon\geq 0$ and $j\in\{0,\ldots,d\}$, then
\begin{equation*}
	\big\Vert e^{\varepsilon|\cdot|_\dist/\sqrt{t}}K_{e^{-t\Delta}}\big\Vert_{L^1(G)}\lesssim_\varepsilon 1,\qquad \big\Vert e^{\varepsilon|\cdot|_\dist/\sqrt{t}} X_j K_{e^{-t\Delta}}\big\Vert_{L^1(G)}\lesssim_\varepsilon t^{-1/2}, \qquad t>0.
\end{equation*}
Moreover, for all $t_0>0$ and $\alpha,\beta \in \bigcup_{N \in \NN} \{0,\dots,d\}^N$,
\begin{equation}\label{eq:9}
	\big\Vert e^{\varepsilon|\cdot|_\dist/\sqrt{t}} X^\alpha (X^\beta K_{e^{-t\Delta}})^\ast\big\Vert_{L^1(G)}\lesssim_{\varepsilon,t_0,\alpha,\beta} t^{-(|\alpha|+|\beta|)/2},\qquad t\in(0,t_0].
\end{equation} 
\end{prop}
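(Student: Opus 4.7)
The starting point is the representation formula
\[
K_{e^{-t\Delta}}(x,u) = \int_0^\infty \psi_t(\zeta)\, e^{-\cosh u/\zeta}\, K_{e^{-e^u\zeta\Lapl/2}}(x) \,\dd\zeta
\]
combined with classical Gaussian upper bounds for the sub-Laplacian heat kernel on the Carnot group $N$,
\[
|\breve{X}^\gamma K_{e^{-s\Lapl}}(x)| \lesssim_\gamma s^{-(Q+|\gamma|)/2} \exp(-c|x|_{\dist_N}^2/s), \qquad s>0,
\]
available for every horizontal multiindex $\gamma$ by the work of Folland and Varopoulos--Saloff-Coste--Coulhon. In particular, $\|\breve X^\gamma K_{e^{-s\Lapl}}\|_{L^1(N)} \lesssim_\gamma s^{-|\gamma|/2}$.

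For the first two bounds, I would differentiate the representation formula. For $j\ge 1$, $X_j = e^u\breve X_j$ pulls out an extra factor $e^u$ and an $s^{-1/2}$ from the horizontal Gaussian gradient estimate with $s = e^u\zeta/2$; for $X_0 = \partial_u$, one differentiates either $e^{-\cosh u/\zeta}$ (producing a factor $\sinh u/\zeta$, absorbed by the same exponential) or the inner heat kernel factor via the heat equation $\partial_s K_{e^{-s\Lapl}} = -\Lapl K_{e^{-s\Lapl}}$. The weight $e^{\varepsilon|(x,u)|_\dist/\sqrt{t}}$ is handled through the elementary estimate
\[
|(x,u)|_\dist \lesssim 1 + |u| + \log_+\!\bigl(|x|_{\dist_N} e^{-u/2}\bigr),
\]
derived from $\cosh|(x,u)|_\dist = \cosh u + |x|_{\dist_N}^2/(2e^u)$ together with $\arccosh y \le 1+\log(2y)$ for $y \ge 1$: the $|u|$-term is absorbed by a fraction of $e^{-c\cosh u/\zeta}$, and the logarithmic term by a fraction of the Gaussian in $|x|_{\dist_N}$, at the cost of constants depending only on $\varepsilon$. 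Performing the $x$-integration first (which absorbs all $\zeta^{-Q/2}$-factors), one is left with a $(u,\zeta)$-integral whose convergence follows from the bound $|\psi_t(\zeta)|\lesssim_t \zeta^{-2}$ at infinity and the fast decay of $\psi_t$ at $\zeta\to 0^+$ implicit in \eqref{eq:psi} through the inner factor $e^{-\cosh\theta/\zeta}$; tracking the $t$-dependence reproduces the scaling $t^{-|\alpha|/2}$ uniformly in $t>0$.

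For the higher-order bound \eqref{eq:9} I would first use $K^*(\bdx) = m(\bdx)\overline{K(\bdx^{-1})}$, together with the standard commutation rules between left-invariant differentiation and the involution ${}^*$, to rewrite $X^\alpha(X^\beta K_{e^{-t\Delta}})^*$ as a finite linear combination of terms of the form $\phi(u)\cdot (X^\gamma K_{e^{-t\Delta}})(\bdx^{-1})$ with $|\gamma|\le|\alpha|+|\beta|$ and $\phi$ a polynomial combination of $e^u$ and $e^{-u}$ (the negative powers coming from the modular function $m=e^{-Qu}$ and from the inversion, the positive ones from applying $X_j = e^u \breve X_j$). Each $X^\gamma K_{e^{-t\Delta}}$ is then estimated by iterating the argument above. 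The main technical obstacle --- and the reason the small-time restriction $t\le t_0$ appears only in \eqref{eq:9} --- is the interplay between the factors $e^u$ generated by each horizontal vector field $X_j$ ($j\ge 1$) and the exponential decay $e^{-\cosh u/\zeta}$ in the representation: for $t \le t_0$, one can absorb any fixed number of such $e^u$-factors uniformly into the available Gaussian decay $e^{-c\cosh u/\zeta}$, whereas this absorption ceases to be uniform as $t$ grows, because the effective range of $\zeta$ contributing to $\psi_t$ is no longer bounded.
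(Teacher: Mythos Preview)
This proposition is not proved in the present paper; as the header indicates, it is imported from Propositions~3.1 and 3.5 of \cite{MaVa}. There is therefore no proof here against which to compare your proposal.

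That said, your outline is broadly in line with the strategy actually used in \cite{MaVa}: the heat kernel on $G$ is expressed via Gnewuch's formula in terms of heat kernels on $N$, one invokes the standard Gaussian bounds on $N$, and the weight $e^{\varepsilon|\cdot|_\dist/\sqrt t}$ is controlled through the explicit formula for $|(x,u)|_\dist$. Where your sketch is thin is precisely the hard part of the cited result: the first two estimates are asserted \emph{uniformly in all $t>0$}, and your line ``tracking the $t$-dependence reproduces the scaling $t^{-|\alpha|/2}$ uniformly in $t>0$'' hides essentially all the work. The crude bound $|\psi_t(\zeta)|\lesssim_t \zeta^{-2}$ carries a $t$-dependent constant and gives nothing uniform; obtaining large-time control requires a quantitative analysis of $\psi_t$ that you have not supplied. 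Similarly, for \eqref{eq:9} the ``standard commutation rules between left-invariant differentiation and the involution $^*$'' that you invoke are not quite standard: applying $X^\alpha$ to $K^*$ produces \emph{right}-invariant derivatives of $K$ at $\bdx^{-1}$, which must then be converted back to left-invariant ones via the adjoint representation, generating polynomial-in-$e^{\pm u}$ coefficients of a specific structure that has to be tracked. Your sketch gestures at both steps but does not carry them out.
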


\section{Riesz transforms}\label{S:Riesz_tr}

\subsection{Decomposition and boundedness of the local parts}

We begin by recalling from \cite{MaVa} the precise definition of the Riesz transforms $\Riesz_j$. By \cite[Proposition~4.1]{MaVa} we know that the range of $\Delta^{-1/2}$ is contained in the domain of each $X_j$, $j=0,\ldots,d$. Moreover, $X_j\Delta^{-1/2}$ defined initially on the domain of $\Delta^{-1/2}$ extends to a bounded operator $\overline{X_j\Delta^{-1/2}}$ on $L^2$. We define $\Riesz_j = \overline{X_j\Delta^{-1/2}}$.

We recall the well-known subordination formula
\begin{equation*}
	\Delta^{-1/2} =\frac{2}{\pi}\int_0^\infty \big(t^2+\Delta\big)^{-1}\dd t,
\end{equation*}
which allows us to write, at least on a dense domain,
\begin{equation*}
	\Riesz_j =\frac{2}{\pi}\int_0^\infty X_j \big(t^2+\Delta\big)^{-1}\dd t.
\end{equation*}
Thus, we can decompose the Riesz transforms as
\begin{equation}\label{eq:Riesz_dec}
\frac{\pi}{2} \Riesz_j = \Riesz_j^{0}+\Riesz_j^{\infty},
\end{equation}
where
\begin{equation}\label{eq:Riesz_loc_inf}
	\Riesz_j^{0} =   \int_{1/2}^\infty X_j (t^2+\Delta)^{-1} \,\dd t, \qquad
	\Riesz_j^{\infty} =   \int_0^{1/2} X_j (t^2+\Delta)^{-1} \,\dd t.
\end{equation}
The reason for the splitting of the integral at the point $1/2$ is merely technical, in that it simplifies the presentation of some of the estimates later in the proof.

Each of the above operators is bounded on $L^2(G)$. We shall justify that they are also bounded on $L^p(G)$ for $1<p<\infty$.
As we shall see, the ``local parts'' $\Riesz_j^{0}$ can be treated in a relatively standard way by means of Theorem \ref{thm:CZ}: indeed, for these parts, favourable estimates for the corresponding convolution kernels can be derived from the heat kernel estimates in Proposition \ref{prop:MaVa}.
Justifying the $L^p$-boundedness of the ``parts at infinity'' $\Riesz_j^\infty$ will require a different approach, which is the main contribution of this paper.

Let us start with the discussion of the local parts.

\begin{prop}\label{prop:localparts}
	The operators $\Riesz_{j}^{0}$, $j=0,\ldots,d$, are bounded on $L^p(G)$ for $1 < p < \infty$.
\end{prop}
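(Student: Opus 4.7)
The plan is to apply the Calder\'on--Zygmund-type Theorem~\ref{thm:CZ} both to $\Riesz_j^{0}$ and to its adjoint $(\Riesz_j^{0})^\ast$, obtaining $L^p$-boundedness for $p \in (1,2]$ from the former and (by duality) for $p \in [2,\infty)$ from the latter. Going to the adjoint is legitimate here because $X_j^\ast = -X_j$ on $L^2(G)$ (each $X_j$ integrates to a flow that preserves the right Haar measure), so $(\Riesz_j^{0})^\ast$ is again a left-invariant operator whose convolution kernel is the involution $K_{\Riesz_j^{0}}^\ast$.

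The first step is to use the subordination formula $(t^2+\Delta)^{-1} = \int_0^\infty e^{-st^2} e^{-s\Delta}\,\dd s$ and Fubini to write
\[
\Riesz_j^{0} = \int_0^\infty \phi(s)\, X_j e^{-s\Delta}\,\dd s, \qquad \phi(s) \defeq \int_{1/2}^\infty e^{-st^2}\,\dd t,
\]
where $|\phi(s)| \lesssim \min\{s^{-1/2},s^{-1}\}\, e^{-s/4}$: in particular $\phi$ is only mildly singular at $0$ and exponentially decaying at $\infty$. The convolution kernel of $\Riesz_j^{0}$ is then $K_{\Riesz_j^{0}}(\bdz) = \int_0^\infty \phi(s)\, X_j K_{e^{-s\Delta}}(\bdz)\,\dd s$, and I would decompose it dyadically as $\sum_{n\in\ZZ} K_n$ with $K_n \defeq \int_{2^n}^{2^{n+1}} \phi(s)\, X_j K_{e^{-s\Delta}}\,\dd s$, and check the hypotheses of Theorem~\ref{thm:CZ} with $c = 2^{-1/2}$. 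The first hypothesis $\int_G |K_n|(1 + c^n|\bdz|_\dist)^\varepsilon\,\dd\bdz \leq B$ follows from the first weighted $L^1$ bound in Proposition~\ref{prop:MaVa}, because $(1 + 2^{-n/2}|\bdz|_\dist)^\varepsilon \lesssim e^{\varepsilon|\bdz|_\dist/\sqrt s}$ for $s \in [2^n,2^{n+1})$, so the weighted integral of $|K_n|$ is controlled by $\int_{2^n}^{2^{n+1}} |\phi(s)|\, s^{-1/2}\,\dd s$, which is uniform in $n$ by the behavior of $\phi$. The second hypothesis $\int_G |X_k K_n^\ast|\,\dd\bdz \leq B c^n$ is handled for $n \leq 0$ by \eqref{eq:9} with $\alpha=(k)$, $\beta=(j)$: it gives $\|X_k(X_j K_{e^{-s\Delta}})^\ast\|_{L^1} \lesssim s^{-1}$, which integrates against $|\phi(s)|$ to $\lesssim 2^{-n/2} = c^n$.

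The analogous analysis for $(\Riesz_j^{0})^\ast$, with kernel $\sum_n K_n^\ast$, proceeds similarly: the first hypothesis follows once again from Proposition~\ref{prop:MaVa}, using that the involution is $L^1$-isometric with respect to the weight $e^{\varepsilon|\bdz|_\dist/\sqrt s}$ (a consequence of $|\bdz^{-1}|_\dist = |\bdz|_\dist$ and of the modular function being the Jacobian of group inversion); the second hypothesis becomes $\int |X_k K_n|\,\dd\bdz \leq B c^n$, requiring the estimate $\|X_k X_j K_{e^{-s\Delta}}\|_{L^1} \lesssim s^{-1}$ for small $s$, which follows from \eqref{eq:9} with $\alpha=(k,j)$, $\beta=()$ combined with $K_{e^{-s\Delta}}^\ast = K_{e^{-s\Delta}}$. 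The main technical point in both arguments is that \eqref{eq:9} is only valid on $(0, t_0]$, so the large-$s$ terms of the dyadic decomposition ($n \geq 1$) have to be treated separately. For those I would exploit the semigroup factorizations $(X_j K_{e^{-s\Delta}})^\ast = (X_j K_{e^{-t_0\Delta}})^\ast \ast K_{e^{-(s-t_0)\Delta}}$ and $X_j K_{e^{-s\Delta}} = K_{e^{-(s-t_0)\Delta}} \ast X_j K_{e^{-t_0\Delta}}$ (both consequences of $e^{-s\Delta} = e^{-(s-t_0)\Delta} e^{-t_0\Delta}$ and of the self-adjointness of $e^{-s\Delta}$), together with Young's inequality for right convolution on $L^1(G)$; this localizes the derivatives to the small-time heat kernel, and the resulting polynomial factors in $s$ are easily absorbed by the exponential decay of $\phi(s)$.
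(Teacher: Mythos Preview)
Your argument is correct and follows essentially the same route as the paper: subordinate the resolvent to the heat semigroup, decompose dyadically in the time variable, and verify the hypotheses of Theorem~\ref{thm:CZ} for both $\Riesz_j^{0}$ and its adjoint using the heat kernel estimates of Proposition~\ref{prop:MaVa}. The only minor difference is in the treatment of the large-$s$ part: the paper simply splits off $\int_1^\infty$ and shows that the corresponding convolution kernel lies in $L^1(G)$ outright (so it is bounded on every $L^p$), whereas you keep all dyadic scales and handle $n\geq 1$ via the semigroup factorisation $e^{-s\Delta}=e^{-t_0\Delta}e^{-(s-t_0)\Delta}$ together with Young's inequality; both are valid, though the paper's shortcut avoids the factorisation step.
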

\begin{proof}
We shall make use of the subordination formula
\begin{equation*}
	(t^2+\Delta)^{-1} =\int_0^\infty e^{-s(t^2+\Delta)} \,\dd s
\end{equation*}
to reduce the problem to heat kernel estimates. It shall actually be convenient to further split $\Riesz_j^0 = \Riesz_j^{0,0} + \Riesz_j^{0,\infty}$, where
\[
\Riesz_j^{0,0} =\int_{1/2}^\infty\int_0^1 e^{-st^2} X_j e^{-s\Delta}\,\dd s\,\dd t, \qquad
\Riesz_j^{0,\infty} =\int_{1/2}^\infty\int_1^\infty e^{-st^2} X_j e^{-s\Delta}\,\dd s\,\dd t.
\]
Clearly, analogous integral formulas hold true for the respective convolution kernels.

The operators $\Riesz_j^{0,\infty}$ are bounded on $L^p(G)$ for all $p \in [1,\infty]$, because their convolution kernels are in $L^1(G)$. Indeed,
by Proposition \ref{prop:MaVa},
	\begin{multline*}
		\big\Vert K_{\Riesz_j^{0,\infty}} \big\Vert_{L^1(G)}\leq   \int_{1/2}^\infty\int_1^\infty e^{-st^2} \big\Vert X_j K_{e^{-s\Delta}} \big\Vert_{L^1(G)}\,\dd s\,\dd t\\
		 \lesssim \int_{1/2}^\infty\int_1^\infty e^{-st^2} \,\frac{\dd s}{\sqrt{s}}\,\dd t<\infty. 
	\end{multline*}

It remains to justify the required boundedness properties of $\Riesz_{j}^{0,0}$; for this we shall instead apply Theorem \ref{thm:CZ}.
More precisely, let us fix $j\in \{ 0,\ldots,d\}$. We shall show that both $\Riesz_j^{0,0}$ and its adjoint $(\Riesz_j^{0,0})^\ast$ satisfy the assumptions of Theorem \ref{thm:CZ} with $c=\sqrt{2}$ and $\varepsilon=1$, hence $\Riesz_j^{0,0}$ is bounded on $L^p(G)$ for all $p\in (1,\infty)$.

Much as in \cite[proof of Theorem 1.1(i)]{MaVa}, we decompose
	\begin{equation*}
	\Riesz_{j}^{0,0} =\sum_{n\in\NN} \int_{2^{-n-1}}^{2^{-n}} X_j e^{-s\Delta} \int_{1/2}^\infty e^{-st^2}\,\dd t\,\dd s  =: \sum_{n\in\NN} T_{n}.
	\end{equation*}
	Clearly, $T_n$ is a convolution operator associated with the kernel
	\begin{equation*}
	K_n = \int_{2^{-n-1}}^{2^{-n}} X_j K_{e^{-s\Delta}}\int_{1/2}^\infty e^{-st^2}\,\dd t\,\dd s.
	\end{equation*}
	Moreover,
	\begin{equation*}
	\big(\Riesz_{j}^{0,0}\big)^\ast = \sum_{n\in\NN} T^\ast_{n}, \qquad 	K_n^\ast= \int_{2^{-n-1}}^{2^{-n}} \big(X_j K_{e^{-s\Delta}}\big)^\ast\int_{1/2}^\infty e^{-st^2}\,\dd t\,\dd s.
	\end{equation*}
	
Observe that
	\begin{equation*}
	\int_{1/2}^\infty e^{-st^2}\,\dd t \simeq s^{-1/2}, \qquad s\in(0,1).
	\end{equation*}
	Thus, by applying Proposition \ref{prop:MaVa} we obtain
	\begin{equation*}
		\begin{split}
	\int_G \big| K_n(\bdx)\big| (1+ 2^{n/2}|\bdx|_\dist)\,\dd\bdx &= \int_G \big| K_n^\ast(\bdx)\big| (1+ 2^{n/2}|\bdx|_\dist)\,\dd\bdx\\
	&\lesssim \int_{2^{-n-1}}^{2^{-n}} \big\Vert e^{|\cdot|_\dist/\sqrt{s}} X_j K_{e^{-s\Delta}}\big\Vert_{L^1(G)} \frac{\dd s}{\sqrt{s}}\\
	&\lesssim \int_{2^{-n-1}}^{2^{-n}} \frac{\dd s}{s}\simeq 1.
	\end{split}
	\end{equation*}
On the other hand, for $k\in\{0,\ldots,d \}$, Proposition \ref{prop:MaVa} implies that
	\begin{equation*}
	\int_G \big| X_{k} K_n^\ast(\bdx)\big|\,\dd \bdx \leq \int_{2^{-n-1}}^{2^{-n}} \big\Vert X_{k}\big( X_j K_{e^{-s\Delta}}\big)^\ast\big\Vert_{L^1(G)} \frac{\dd s}{\sqrt{s}} \lesssim \int_{2^{-n-1}}^{2^{-n}} s^{-3/2} \,\dd s\simeq 2^{n/2},
	\end{equation*}
and much the same
	\begin{equation*}
	\int_G \big| X_{k} K_n(\bdx)\big|\,\dd \bdx \leq \int_{2^{-n-1}}^{2^{-n}} \big\Vert X_{k} X_j K_{e^{-s\Delta}}\big\Vert_{L^1(G)} \frac{\dd s}{\sqrt{s}} \lesssim \int_{2^{-n-1}}^{2^{-n}} s^{-3/2} \,\dd s\simeq 2^{n/2},
	\end{equation*}
as required.
\end{proof}

\begin{rem}
Notice that the argument from the proof of Proposition \ref{prop:localparts}, where the Riesz transforms are subordinated to the heat propagator, could also be applied to the parts at infinity $\Riesz_j^\infty$ if the heat kernel estimate \eqref{eq:9} held also for large times. As it turns out, \eqref{eq:9} actually holds uniformly in $t \in (0,\infty)$ for any $|\alpha|,|\beta| \leq 1$, whence the $L^p$-boundedness of the Riesz transforms $\Riesz_j$ for $1 < p < 2$ follows \cite{MaVa}. The case $2 < p < \infty$ would instead require a large-time version of \eqref{eq:9} for $|\alpha| = 2$ and $|\beta| = 0$; however, such estimate cannot be true, since it would also imply the boundedness of $\Riesz_j^\ast$ from $H^1(G)$ to $L^1(G)$, which is known to be false, at least when $N$ is abelian; see \cite[Proposition~7.1]{Ma23} and \cite[Theorems~4.2 and 5.2]{SjVa}. 
\end{rem}

\subsection{An operator-valued spectral multiplier theorem}

In order to prove that the parts at infinity $\Riesz_j^\infty$ are bounded on $L^p(G)$, we shall make use of the operator-valued spectral multiplier theorem proved in \cite{MaPl2}. 

For the reader's convenience, we briefly recall some facts discussed in detail in \cite[Section 7]{MaPl2}. Let $X$ be a $\sigma$-finite measure space and $L$ be a positive self-adjoint operator on $L^2(X)$. By the spectral theorem, there exist a $\sigma$-finite measure space $\Omega$, a measurable function $\ell\colon\Omega\to\Rpos$ and a unitary transformation $\Upsilon: L^2(X)\to L^2(\Omega)$ that intertwines $L$ with the operator of multiplication by $\ell$:
\begin{equation*}
\Upsilon (L f) (\omega) = \ell(\omega) \Upsilon f(\omega).
\end{equation*}
Moreover, for any bounded Borel function $F\colon \Rpos\to\CC$ there holds
\begin{equation*}
\Upsilon (F(L) f) (\omega) = F\big(\ell(\omega)\big) \Upsilon f(\omega),
\end{equation*}
and $F(L)$ is bounded on $L^2(X)$.

This construction can be extended to operator-valued functions. Namely, observe that $\tilde{\Upsilon}=\Upsilon\otimes \id : L^2(X \times \RR) \to L^2(\Omega \times \RR)$ is also a unitary transformation. Let $M: \Rpos\to\LinBnd(L^2(\RR))$ be a uniformly bounded, weakly measurable operator-valued function; then we define the operator $M(L) \in \LinBnd(L^2(X \times \RR))$ by
\begin{equation}\label{eq:opval_fc}
\tilde{\Upsilon}\big(M(L)f\big) (\omega,u)= \big(M(\ell(\omega)) \tilde{\Upsilon}f(\omega,\cdot)\big)(u)
\end{equation}
for all $f\in L^2(X\times\RR)$ and almost all $(\omega,u) \in \Omega \times \RR$.
One can show that the definition of $M(L)$ is independent of the intertwining transformation $\Upsilon$, and moreover
\begin{equation*}
\Vert M(L)\Vert_{\LinBnd(L^2(X\times\RR))} \leq \sup_{\xi\in\Rpos} \Vert M(\xi)\Vert_{\LinBnd(L^2(Y))}.
\end{equation*}

We recall some further terminology from \cite{MaPl2} that we shall frequently use in the sequel. An operator-valued function $M\colon\Rpos\to\LinBnd(L^2(\RR))$ is said to be \emph{(weakly) continuous} if for all $f,g\in L^2(\RR)$ the matrix coefficients
\begin{equation}\label{eq:matcoef}
\xi\mapsto \langle M(\xi)f,g\rangle
\end{equation} 
are continuous on $\Rpos$. Furthermore, for any $k \in \NN$, the function $M\colon\Rpos\to\LinBnd(L^2(\RR))$ is said to be \emph{(weakly) of class $C^k$} if for all $f,g\in L^2(\RR)$ the matrix coefficients \eqref{eq:matcoef} are in $C^k(\Rpos)$; under this assumption, for any $i=0,\dots,k$, there exists a continuous operator-valued function $\partial_\xi^i M : \Rpos \to \LinBnd(L^2(\RR))$ such that
\begin{equation*}
\langle \partial_\xi^i M(\xi)f,g\rangle = \partial_\xi^i \langle M(\xi)f,g\rangle
\end{equation*} 
for all $f,g \in L^2(\RR)$.
As usual, if $M$ is of class $C^k$ for all $k\in\NN$, then we say that $M$ is of class $C^\infty$.

\begin{rem}\label{rem:1}
An application of the Banach--Steinhaus theorem readily shows that, for any given $k \in \NN$, a function $M\colon\Rpos\to\LinBnd(L^2(\RR))$ is of class $C^k$ if and only if there exist continuous operator-valued functions $M_i : \Rpos \to \LinBnd(L^2(\RR))$, $i=0,1,\ldots,k$, and a dense subclass $\mathcal{A}\subset L^2(\RR)$ such that for all $f,g\in\mathcal{A}$ the matrix coefficients \eqref{eq:matcoef} are in $C^k(\Rpos)$ and there holds
	\begin{equation*}
	\partial_\xi^i \langle M(\xi)f,g\rangle = \langle M_i(\xi)f,g\rangle\qquad \forall f,g\in\mathcal{A}.
	\end{equation*}
\end{rem}

Let $\chi\in C_c^\infty(\Rpos)$ be a nontrivial nonnegatite cutoff. For $s>1/2$ we say that a Borel function $F\colon\Rnon\to\CC$ is in the \emph{H\"ormander class} $\hormander{s}$ if 
\begin{equation*}
\Vert F\Vert_{\hormander{s}} \defeq \sup_{t>0} \Vert F(t\cdot) \chi \Vert_{\sobolev{s}{2}}<\infty.
\end{equation*}
The definition of $\hormander{s}$ does not depend on the choice of $\chi$ and the resulting norms are comparable for all such $\chi$. Keep in mind that, by the Sobolev embedding theorem, elements of $\hormander{\alpha}$ are continuous and bounded on $\Rpos$.
We shall say that a positive self-adjoint operator $L$ on $L^2(X)$ has a \emph{bounded $\hormander{s}$ functional calculus on $L^p(X)$} for some $p \in (1,\infty)$ if, for any $F \in \hormander{s}$, the operator $F(L)$ is $L^p(X)$-bounded and
\[
\|F(L)\|_{p \to p} \lesssim \|F\|_{\hormander{s}}.
\]

Recall that a nonnegative locally integrable function $w$ is said to be in the \emph{Muckenhoupt class} $A_p(\RR)$, $p\in(1,\infty)$, if
\begin{equation*}
[w]_{A_p}\defeq \sup_{I} \left( \frac{1}{|I|} \int_I w\right) \left( \frac{1}{|I|} \int_I w^{-\frac{1}{p-1}}\right)^{p-1}<\infty,
\end{equation*}
where the supremum is taken over all intervals $I$ in $\RR$. The quantity $[w]_{A_p}$ is called  the $A_p$ characteristic of the weight $w$. We point out that the $A_p$ characteristic is translation-invariant, i.e., $[w(\cdot+t)]_{A_p} = [w]_{A_p}$ for all $t \in \RR$.

The following result is a particular case of \cite[Corollary 7.8]{MaPl2}.

\begin{thm}\label{thm:V-VSMT}
Let $s_L \geq 1/2$ and $p \in (1,\infty)$.
Let $L$ be a positive self-adjoint operator on $L^2(X)$ having a bounded $\hormander{s}$-calculus on $L^p(X)$ for all $s>s_L$. Let $N > s_L + 3/2$ be an integer. Let $M\colon\Rpos\to\LinBnd(L^2(\RR))$ be of class $C^{N}$. Assume that there exists a nondecreasing function $\psi : [1,\infty) \to [0,\infty)$ such that, for all $w\in A_2(\RR)$,
\[
\max_{j=0,\dots,N} \sup_{\xi\in\Rpos}\big\Vert \xi^j\partial_\xi^j M(\xi)\big\Vert_{\LinBnd(L^2(w))} \leq \psi([w]_{A_2}).
\]
Then, $M(L)$ is a bounded operator on $L^p(X\times \RR)$.
\end{thm}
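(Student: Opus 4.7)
The result is explicitly stated as a particular case of \cite[Corollary 7.8]{MaPl2}, so the plan is simply to verify that its hypotheses hold. The general corollary, being an operator-valued spectral multiplier theorem built on top of a scalar $\hormander{s}$-calculus for $L$, presumably requires a weighted Mihlin-type condition on the operator-valued symbol phrased in terms of weighted $L^p(\RR)$ norms, where $p$ is precisely the exponent of the target space $L^p(X\times\RR)$. Our hypothesis, however, is formulated in weighted $L^2(\RR)$ norms. The main bridging step is therefore to extrapolate from $L^2(w)$ to $L^p(w)$.

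First, I would invoke Rubio de Francia's extrapolation theorem, applied to each individual operator $\xi^j\partial_\xi^j M(\xi)$ for every fixed $\xi\in\Rpos$ and $j\in\{0,\dots,N\}$. The hypothesis gives a uniform bound on the $L^2(w)$-operator norm that depends only on $[w]_{A_2}$ via the nondecreasing function $\psi$. Extrapolation then yields, for every $p\in(1,\infty)$ and every $w\in A_p(\RR)$, an inequality of the form
\begin{equation*}
\big\Vert \xi^j\partial_\xi^j M(\xi)\big\Vert_{\LinBnd(L^p(w))} \leq \psi_p([w]_{A_p})
\end{equation*}
for some nondecreasing $\psi_p:[1,\infty)\to[0,\infty)$. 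The crucial feature is that this bound is still uniform in $\xi\in\Rpos$ and in $j\in\{0,\dots,N\}$, since the extrapolation constants depend only on the $L^2(w)$-operator norm and on $[w]_{A_2}$, and not on the operator itself.

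Second, I would check that the remaining ingredients required by Corollary 7.8 are in hand: the scalar $\hormander{s}$-calculus of $L$ on $L^p(X)$ for every $s>s_L$ is one of the hypotheses; the smoothness exponent condition $N>s_L+3/2$ is given; and the $C^N$-regularity of $M$, together with the characterization recalled in Remark \ref{rem:1}, ensures that each $\partial_\xi^j M(\xi)$ is well defined as a continuous operator-valued function for $0\le j\le N$. With all prerequisites verified, Corollary 7.8 delivers the desired $L^p(X\times\RR)$-boundedness of $M(L)$.

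The one subtle point on which I would take care is to confirm that the extrapolation step reproduces a bound of \emph{exactly} the shape demanded by Corollary 7.8 (in particular, uniformity in $\xi$ and dependence on the weight only through a nondecreasing function of $[w]_{A_p}$). Beyond this routine check, no further obstacle should arise: all the analytical difficulty is bundled inside Corollary 7.8 itself, so once its hypotheses are matched, the theorem follows at once.
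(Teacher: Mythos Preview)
Your proposal is essentially correct and aligned with the paper: the paper gives no proof of this theorem at all, but simply states it as a particular case of \cite[Corollary 7.8]{MaPl2}. Your additional discussion of Rubio de Francia extrapolation as a possible bridge between the $L^2(w)$ hypothesis here and a potential $L^p(w)$ formulation in the cited corollary is reasonable speculation (and indeed the paper mentions this equivalence in the introduction), but whether this step is actually needed depends on the precise statement of the external corollary, which is not reproduced here; in any event, all the content lies in \cite{MaPl2}, and both you and the paper defer to it.
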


\subsection{The parts at infinity of the Riesz transforms}

In this section we study the operators $\Riesz_j^\infty$. We begin with a reduction. Set
\begin{equation}\label{eq:tRiesz}
\begin{aligned}
\widetilde{\Riesz}_0^\infty &\defeq \Riesz_0^\infty - \big(\Riesz_0^\infty\big)^\ast=\int_0^{1/2}\big( X_0 (t^2+\Delta)^{-1}+(t^2+\Delta)^{-1}X_0\big) \,\dd t,\\
\widetilde{\Riesz}_1^\infty &\defeq  \int_0^{1/2} \big(e^u \Lapl^{1/2}\big) (t^2+\Delta)^{-1}\,\dd t.  
\end{aligned}
\end{equation}

In the next proposition we reduce the question of the $L^p$-boundedness of the operators $\Riesz^\infty_j$, $j=0,\ldots,d$, to that of the operators $\widetilde{\Riesz}^\infty_j$, $j=0,1$. 

\begin{prop}\label{prop:8}
If the $\widetilde{\Riesz}^\infty_j$, $j=0,1$, are bounded on $L^p(G)$ for all $p \in (1,\infty)$, then also the $\Riesz^\infty_j$, $j=0,\ldots, d$, are bounded on $L^p(G)$ for all $p \in (1,\infty)$.
\end{prop}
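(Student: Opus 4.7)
The plan is to split the $d+1$ statements into two disjoint mechanisms: for $j \geq 1$ an algebraic factorization pulling out a scalar Riesz transform on $N$, and for $j=0$ a duality argument combined with the $p \in (1,2]$ bound from \cite{MaVa}.

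For $j \geq 1$, I would exploit the fact that $X_j = e^u \breve{X}_j$ and $\breve{X}_j = \Riesz_j^N \Lapl^{1/2}$, where $\Riesz_j^N = \breve{X}_j \Lapl^{-1/2}$ is the scalar Riesz transform on the Carnot group $N$. Once lifted to $G$ fibrewise in the $N$-variable, $\Riesz_j^N$ commutes with multiplication by $e^u$ (a function of the $\RR$-variable only), so at the level of the integrand
\[
X_j(t^2+\Delta)^{-1} = \Riesz_j^N \,(e^u \Lapl^{1/2})(t^2+\Delta)^{-1}.
\]
Integrating in $t \in (0,1/2)$ and pulling $\Riesz_j^N$ outside of the integral yields the factorization
\[
\Riesz_j^\infty = \Riesz_j^N \,\widetilde{\Riesz}_1^\infty.
\]
The classical $L^p(N)$-boundedness of $\Riesz_j^N$ for all $p \in (1,\infty)$ lifts by Fubini to $L^p(G) = L^p(N;L^p(\RR))$, so the hypothesis on $\widetilde{\Riesz}_1^\infty$ gives the desired boundedness of $\Riesz_j^\infty$.

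For $j = 0$, a quick calculation in the $u$-variable (using that $dx\,du$ is invariant in $u$) shows that $X_0^\ast = -X_0$ on $L^2(G)$, so $(\Riesz_0^\infty)^\ast = -\int_0^{1/2} (t^2+\Delta)^{-1} X_0\,\dd t$ and \eqref{eq:tRiesz} reads
\[
\widetilde{\Riesz}_0^\infty = \Riesz_0^\infty - (\Riesz_0^\infty)^\ast, \qquad \Riesz_0^\infty = \widetilde{\Riesz}_0^\infty + (\Riesz_0^\infty)^\ast.
\]
By \cite{MaVa}, Proposition \ref{prop:localparts} and the decomposition \eqref{eq:Riesz_dec}, we already know that $\Riesz_0^\infty$ is $L^p(G)$-bounded for $p \in (1,2]$; dualising gives the corresponding bound for $(\Riesz_0^\infty)^\ast$ on $L^p$ with $p \in [2,\infty)$. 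Combined with the hypothesis on $\widetilde{\Riesz}_0^\infty$, the above decomposition extends $\Riesz_0^\infty$ to $p \in (2,\infty)$, closing the full range.

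The most delicate step will be the rigorous justification of the factorization used in the $j \geq 1$ case: the integrand $(e^u \Lapl^{1/2})(t^2+\Delta)^{-1}$ has $L^2 \to L^2$ operator norm only $O(1/t)$ near $t=0$ (which follows from the bound $e^{2u}\Lapl \leq \Delta$ as quadratic forms), so the integral defining $\widetilde{\Riesz}_1^\infty$ is at best weakly convergent. I would therefore perform the algebraic manipulation on a dense subspace such as $C_c^\infty(G)$, where all the operators involved are well defined via the spectral calculus, and then close everything up using the known $L^2$-boundedness of $\Riesz_j^\infty$, $\widetilde{\Riesz}_1^\infty$, and $\Riesz_j^N$.
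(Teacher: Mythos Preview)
Your proposal is correct and follows essentially the same approach as the paper: for $j\geq 1$ you factor $\Riesz_j^\infty = \Riesz_j^N\,\widetilde{\Riesz}_1^\infty$ via $X_j = e^u\breve{X}_j$ and invoke the $L^p(N)$-boundedness of the Riesz transforms on $N$, and for $j=0$ you use the identity $\widetilde{\Riesz}_0^\infty = \Riesz_0^\infty - (\Riesz_0^\infty)^*$ together with the known $p\in(1,2]$ bound from \cite{MaVa} and duality. Your closing paragraph about justifying the factorization on a dense subspace is a reasonable caveat; the paper defers this issue to its Proposition~\ref{prop:4}, where the $L^2$-boundedness of $\widetilde{\Riesz}_1^\infty$ is established.
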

\begin{proof}
For $j=0$ we begin by recalling that $\Riesz_0$ is bounded on $L^p(G)$, $p\in(1,2]$ (see \cite{MaVa}). Thus, by difference, from \eqref{eq:Riesz_dec} and Proposition \ref{prop:localparts} we deduce that $\Riesz_0^\infty$ is also bounded on $L^p(G)$, $p\in(1,2]$. By duality, $(\Riesz_0^\infty)^\ast$ is bounded on $L^p(G)$, $p\in [2,\infty)$. Thus, it suffices to consider $\widetilde{\Riesz}^\infty_0$ to prove the boundedness of $\Riesz^\infty_0$ in the full range $p\in(1,\infty)$. 

For $j\in\{1,\ldots,d\}$ observe that, by \eqref{eq:Riesz_loc_inf} and \eqref{eq:tRiesz},
\begin{equation*}
\Riesz_j^\infty =  
X_j \Lapl^{-1/2} \int_0^{1/2} \Lapl^{1/2} (t^2+\Delta)^{-1}\, \dd t = 
\breve{X}_j \Lapl^{-1/2} \widetilde\Riesz_1^\infty.
\end{equation*}
Recall that the $\breve{X}_j \Lapl^{-1/2}$ are the Riesz transforms associated with $\Lapl$ and they are bounded on $L^p(N)$, $p\in(1,\infty)$; see \cite[Theorem~4.10]{Fo75} or \cite{ChGe,tElRoSi,LoVa}. Hence, in order to justify the boundedness of each $\Riesz^\infty_j$ on $L^p(G)$, $p\in(1,\infty)$, it suffices to show this for $\widetilde{\Riesz}_1^\infty $.
\end{proof}

Our goal is to write $\widetilde{\Riesz}_j^\infty$, $j=0,1$, as an operator-valued function of $\Lapl$. Mind that $L^2(G)=L^2(N\times \RR)$. For that purpose we introduce the family
\begin{equation}\label{eq:Hxi}
H(\xi) = -\partial_u^2 +e^{2u}\xi^2, \quad\xi\in\Rpos,
\end{equation}
of Schr\"odinger operators with positive potentials on $L^2(\RR)$. Notice that each $H(\xi)$ is a positive self-adjoint operator on $L^2(\RR)$ with core $C^\infty_c(\RR)$, see, e.g., \cite[Chapter 2, Theorem 1.1]{BS}. By comparing \eqref{eq:subLaplacian} and \eqref{eq:Hxi} we see that, at least formally, $\Delta=H(\sqrt{\Lapl})$. 
The following lemma shall make this relation precise, by considering the functional calculi of $\Delta$ and the $H(\xi)$, and making use of the operator-valued functional calculus of \eqref{eq:opval_fc}. We shall use the notation
\[
T_x f(y) = f(y+x), \qquad x,y\in\RR.
\] 

\begin{lm}\label{lm:6}
Let $F\colon \Rpos\to\CC$ be a bounded Borel function.
	\begin{enumerate}[label=(\roman*)]
		\item\label{lm:6_i:F(H(xi))} $F\big(H(\xi)\big)=T_{\log\xi}F\big(H(1)\big)T_{-\log\xi}$, $\xi\in\Rpos$.
		\item\label{lm:6_i:F(H)cont} The function $F \circ H : \Rnon \to \LinBnd(L^2(\RR))$ deifned by $(F \circ H)(\xi) = F(H(\xi))$ for all $\xi \in \Rpos$, is continuous and uniformly bounded.
		\item\label{lm:6_i:F(Delta)} $(F\circ H)(\sqrt{\Lapl}) = F(\Delta)$. 
	\end{enumerate}
\end{lm}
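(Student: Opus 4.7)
The plan is to reduce everything to two unitary equivalences: for parts~(i)--(ii), a conjugation $H(\xi) = T_{\log\xi} H(1) T_{-\log\xi}$ on $L^2(\RR)$; for part~(iii), an equivalence between $\Delta$ on $L^2(G)$ and a ``direct integral'' of the family $\{H(\sqrt{\ell(\omega)})\}_{\omega\in\Omega}$ induced by the spectral representation $\Upsilon$ of $\Lapl$.

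Parts~(i)--(ii). The identity $T_{\log\xi} H(1) T_{-\log\xi} = H(\xi)$ is a direct computation on the common core $C^\infty_c(\RR)$: conjugation by $T_{-\log\xi}$ turns the potential $e^{2u}$ of $H(1)$ into $e^{2(u+\log\xi)} = \xi^2 e^{2u}$, while leaving $-\partial_u^2$ invariant. Since $H(1)$ is essentially self-adjoint on $C^\infty_c(\RR)$, this identity extends to the self-adjoint closures, and the general rule $F(UAU^{-1}) = UF(A)U^{-1}$ for a unitary $U$ yields~(i). Uniform boundedness in~(ii) is immediate from the spectral theorem, since $\|F(H(\xi))\|_{\LinBnd(L^2(\RR))} \leq \|F\|_\infty$; while continuity follows from~(i), since for $f, g \in L^2(\RR)$,
\[
\langle F(H(\xi)) f, g\rangle = \langle F(H(1)) T_{-\log\xi} f, T_{-\log\xi} g\rangle,
\]
and the right-hand side depends continuously on $\xi$ because $\xi \mapsto T_{-\log\xi}$ is strongly continuous on $L^2(\RR)$ and $F(H(1))$ is bounded.

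Part~(iii). I would first show that $\tilde{\Upsilon}$ intertwines $\Delta$, as a self-adjoint operator on $L^2(G)$, with the operator on $L^2(\Omega\times\RR)$ that acts fiberwise as $H(\sqrt{\ell(\omega)})$. On the core $C^\infty_c(G)$ of $\Delta$, combining \eqref{eq:DeltaLapl} with the intertwining $\Upsilon \Lapl = M_\ell \Upsilon$ of the spectral representation gives
\[
\tilde{\Upsilon}(\Delta f)(\omega,u) = -\partial_u^2 \tilde{\Upsilon}f(\omega,u) + e^{2u}\ell(\omega)\tilde{\Upsilon}f(\omega,u) = H(\sqrt{\ell(\omega)}) \tilde{\Upsilon}f(\omega,\cdot)(u),
\]
which is exactly the claimed fiberwise action on the dense subspace $\tilde{\Upsilon}(C^\infty_c(G))$. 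A closure/self-adjointness argument identifies the two self-adjoint operators, and then the standard Borel functional calculus for direct-integral operators yields that $\tilde{\Upsilon}F(\Delta)\tilde{\Upsilon}^{-1}$ acts as the fiberwise $F(H(\sqrt{\ell(\omega)}))$. By the definition \eqref{eq:opval_fc}, this is precisely $(F\circ H)(\sqrt{\Lapl})$, proving~(iii).

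The main obstacle is the self-adjointness/domain step in~(iii): agreement on $C^\infty_c(G)$ does not by itself identify two self-adjoint extensions. I would circumvent this by first verifying~(iii) for resolvents $F_z(\lambda) = (z+\lambda)^{-1}$ with $\operatorname{Re} z > 0$, where one can directly check on a dense subspace that $(z+\Delta)^{-1} f$ coincides with the operator produced by \eqref{eq:opval_fc} applied to $\xi\mapsto (z+H(\xi))^{-1}$ (essentially because both are characterized as the unique $L^2$-solution of the same elliptic equation), and then extend to general bounded Borel $F$ by strong-operator approximation, using that the algebra generated by the $F_z$ is weakly dense in the bounded Borel functions on $\Rpos$.
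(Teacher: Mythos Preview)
Your treatment of parts (i) and (ii) matches the paper's proof essentially verbatim: the conjugation identity on the core, the functional-calculus rule $F(UAU^{-1})=UF(A)U^{-1}$, and weak continuity via strong continuity of translations are exactly what the paper does.

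For part (iii) you take a genuinely different route. The paper does \emph{not} attempt to intertwine $\Delta$ with a direct integral of the $H(\xi)$ at the level of unbounded operators, nor does it use resolvents. Instead it reduces by density to the heat semigroup $F(\lambda)=e^{-t\lambda}$ and then verifies
\[
e^{-t\Delta}(f_1\otimes f_2) = E_t(\sqrt{\Lapl})(f_1\otimes f_2), \qquad E_t(\xi)\defeq e^{-tH(\xi)},
\]
on simple tensors by an explicit computation: Gnewuch's formula \cite{G} is applied once to $\Delta=-\partial_u^2+e^{2u}\Lapl$ and once to $H(\xi)=-\partial_u^2+e^{2u}\xi^2$, and the two resulting integral expressions are compared term by term after passing through $\tilde\Upsilon$. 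This sidesteps all domain questions, since both sides are already bounded operators and the verification is a concrete integral identity.

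Your resolvent workaround, as written, has a gap. You propose to check that $(z+\Delta)^{-1}f$ and $(F_z\circ H)(\sqrt{\Lapl})f$ agree because ``both are characterized as the unique $L^2$-solution of the same elliptic equation.'' But to verify that $(F_z\circ H)(\sqrt{\Lapl})f$ solves $(z+\Delta)g=f$, you need $(F_z\circ H)(\sqrt{\Lapl})f\in\Dom(\Delta)$ and that $\Delta$ acts on it the way the fiberwise $H(\sqrt{\ell(\omega)})$ does after applying $\tilde\Upsilon$; this is precisely the unbounded intertwining you said you wanted to avoid. Passing from $C^\infty_c(G)$ to resolvent images does not break the circularity. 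What would actually repair your argument is either a quadratic-form identification (both $\Delta$ and the fiberwise operator are the Friedrichs extensions of the same closed form, which is easy to check on simple tensors), or a proof that the span of simple tensors $\Upsilon f_1\otimes f_2$ is a core for the direct-integral operator. Neither is what you wrote, and neither is entirely trivial; the paper's semigroup-plus-Gnewuch approach is cleaner here precisely because it never leaves the realm of bounded operators.
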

\begin{proof}
The relation
\[
H(\xi)=T_{\log\xi}H(1) T_{-\log\xi}, \quad \xi\in\Rpos,
\]
follows directly from the definition of $H(\xi)$; as $T_{\log \xi}$ is a unitary transformation on $L^2(\RR)$, by the uniqueness of the functional calculus we deduce part \ref{lm:6_i:F(H(xi))}.
	
For part \ref{lm:6_i:F(H)cont} observe that, by part \ref{lm:6_i:F(H(xi))}, for all $f,g\in L^2(\RR)$,
\begin{equation*}
\big\langle F(H(\xi))f,g\big\rangle = \big\langle F(H(1)) T_{-\log\xi}f,T_{-\log\xi} g\big\rangle.
\end{equation*}
Since translations are continuous and uniformly bounded in $L^2(\RR)$, we deduce the continuity and boundedness of $F \circ H$.

In order to justify part \ref{lm:6_i:F(Delta)}, by a density argument, it suffices to consider only $F(\lambda)=e^{-t\lambda}$, $t>0$, and prove that
\begin{equation}\label{eq:24}
	E_t(\sqrt{\Lapl}) f =e^{-t\Delta} f
\end{equation}
for all $t > 0$ and $f \in L^2(G)$, where $E_t(\xi)=e^{-tH(\xi)}$. Another density argument shows that it is enough to verify \eqref{eq:24} for $f=f_1\otimes f_2$, where $f_1\in L^2(N)$ and $f_2\in L^2(\RR)$.
	
	We now use Gnewuch's formula \cite[Theorem~2.1]{G}, which applied for $\Delta=-\partial_u^2 +e^{2u}\Lapl$ gives that
	\begin{equation*}
	e^{-t\Delta} (f_1 \otimes f_2) (x,u) = \int_{\RR} \int_0^\infty \psi_t(\zeta) e^{-\frac{\cosh(u-v)}{\zeta}} e^{-\frac{\zeta e^{u+v}}{2}\Lapl} f_1(x) \,\dd \zeta\ f_2(v)\, \dd v,
	\end{equation*}
	where $\psi_t$ is defined in \eqref{eq:psi}. Moreover, the same formula for $H(\xi)=-\partial_u^2 + e^{2u}\xi^2$ implies that
	\begin{equation*}
	E_t(\xi) f_2(u) = \int_{\RR} \int_0^\infty \psi_t(z) e^{-\frac{\cosh(u-v)}{\zeta}} e^{-\frac{\zeta e^{u+v}\xi^2}{2}}  \,\dd \zeta\ f_2(v)\, \dd v.
	\end{equation*}
	
Thus, by the definition \eqref{eq:opval_fc} of the operator-valued functional calculus,
	\begin{equation*}
	\begin{split}
	&\tilde{U}\big( E_t(\sqrt{\Lapl})(f_1\otimes f_2)\big)(\omega,u) \\
	&= Uf_1(\omega) E_t(\ell(\omega))  f_2(u)\\
	 &= Uf_1(\omega) \int_{\RR} \int_0^\infty \psi_t(z)  e^{-\frac{\cosh(u-v)}{\zeta}} 
	 e^{-\frac{\zeta e^{u+v}\ell(\omega)^2}{2}}  \,\dd \zeta\ f_2(v)\, \dd v\\
	 &= \int_{\RR} \int_0^\infty \psi_t(z)  e^{-\frac{\cosh(u-v)}{\zeta} }
	  U\big(e^{-\frac{\zeta e^{u+v}}{2}\Lapl} f_1\big)(\omega) \,\dd \zeta\ f_2(v)\, \dd v\\
	 &=\tilde{U} \big(e^{-t\Delta}(f_1\otimes f_2) \big)(\omega,u),
	 \end{split}
	\end{equation*}
as required.	
\end{proof}

We define the operator-valued functions $M_0,M_1$ on $\Rpos$ given by
\begin{align}
\begin{split}\label{eq:M_j-def}
M_0(\xi) &= \int_0^{1/2}  \big(\partial_u(t^2+H(\xi))^{-1} +(t^2+H(\xi))^{-1}\partial_u\big) \,\dd t,\\
M_1(\xi) &=  \int_0^{1/2} e^{u} \xi \big(t^2+H(\xi)\big)^{-1}\,\dd t.
\end{split}
\end{align}

\begin{prop}\label{prop:4}
	The functions $M_j\colon\Rpos\to\LinBnd(L^2(\RR))$ are continuous and uniformly bounded for $j=0,1$. Moreover,
	\begin{equation}\label{eq:31}
	\widetilde{\Riesz}_0^\infty=M_0(\sqrt{\Lapl}),\qquad \widetilde{\Riesz}_1^\infty = M_1(\sqrt{\Lapl}).
	\end{equation}
\end{prop}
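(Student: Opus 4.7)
The key observation is the translation-conjugation invariance of the family $\{H(\xi)\}_{\xi>0}$: by Lemma \ref{lm:6}\ref{lm:6_i:F(H(xi))}, $(t^2+H(\xi))^{-1} = T_{\log\xi}\,(t^2+H(1))^{-1}\,T_{-\log\xi}$ for every $t>0$. Since the translations $T_s$ commute with $\partial_u$, while a direct computation yields $T_{\log\xi}\, e^u\, T_{-\log\xi} = \xi e^u$, this conjugation passes to the integrands in \eqref{eq:M_j-def} and, upon integrating in $t$, gives
\[
M_j(\xi) = T_{\log\xi}\, M_j(1)\, T_{-\log\xi}, \qquad j=0,1, \quad \xi\in\Rpos.
\]
As $L^2(\RR)$-translations are unitary and strongly continuous in the translation parameter, the uniform boundedness and weak continuity of $M_0, M_1$ reduce to the $L^2$-boundedness of $M_j(1)$.

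To establish the latter, I would use that the nonnegativity of the potential $e^{2u}$ yields the quadratic form bound $\|\partial_u f\|_2^2 + \|e^u f\|_2^2 \leq \langle H(1)f,f\rangle$ on $C_c^\infty(\RR)$, making the ``first-order Riesz transforms'' $R_0 := \partial_u H(1)^{-1/2}$ and $R_1 := e^u H(1)^{-1/2}$ bounded on $L^2(\RR)$ with norm at most $1$. Factoring $e^u = R_1\, H(1)^{1/2}$ on a spectral-compact dense subspace and invoking the spectral theorem then gives
\[
M_1(1) = R_1 \int_0^{1/2} H(1)^{1/2}(t^2+H(1))^{-1}\,dt = R_1\, g(H(1)),
\]
where $g(\lambda) = \arctan\!\bigl(\tfrac{1}{2\sqrt\lambda}\bigr)$ is bounded on $\Rpos$ by $\pi/2$; analogously, using $\bigl(\partial_u(t^2+H(1))^{-1}\bigr)^* = -(t^2+H(1))^{-1}\partial_u$, one gets $M_0(1) = R_0\, g(H(1)) - g(H(1))\, R_0^*$. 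Both $M_0(1)$ and $M_1(1)$ are thus bounded compositions of $L^2$-bounded operators.

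For the identification \eqref{eq:31}, I would argue that for each fixed $t>0$ the bounded operator-valued symbols appearing in the integrands of \eqref{eq:M_j-def} correspond, via the operator-valued calculus \eqref{eq:opval_fc} at $\sqrt\Lapl$, to the respective integrands of $\widetilde\Riesz_j^\infty$ in \eqref{eq:tRiesz}. Indeed, $\partial_u$ and $e^u$ act only on the $\RR$-variable, $\Lapl^{1/2}$ acts only on the $N$-variable and corresponds in the operator-valued picture to multiplication by the scalar $\xi$, and $(t^2+\Delta)^{-1}$ corresponds to $(t^2+H(\xi))^{-1}$ by Lemma \ref{lm:6}\ref{lm:6_i:F(Delta)}. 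Verifying this pointwise identity on a dense subspace of simple tensors $f_1\otimes f_2$ with $\Upsilon f_1$ having spectral support compactly contained in $\Rpos$, and then integrating in $t$ with the help of the spectral factorization of the previous step in order to interchange with $\tilde\Upsilon$, yields \eqref{eq:31} first on the dense subspace and then by boundedness on all of $L^2(G)$.

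The main technical obstacle is that the integrand of $M_1(\xi)$ has $L^2$-operator norm of order $1/t$ as $t\to 0$, so the defining integral converges only in the strong operator topology, not in operator norm. Consequently, naive Fubini arguments across the spectral decomposition of $\Lapl$ are unavailable, and the spectral factorization $e^u = R_1\,H(1)^{1/2}$ is essential both for showing $M_1(1)$ defines a bounded operator and for justifying the interchange of integrals needed in the identification step.
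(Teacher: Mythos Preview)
Your proposal is correct and follows essentially the same route as the paper: reduce to $\xi=1$ via the translation-conjugation identity $M_j(\xi)=T_{\log\xi}M_j(1)T_{-\log\xi}$, and prove $L^2$-boundedness of $M_j(1)$ by factoring through the Schr\"odinger Riesz transforms $\partial_u H(1)^{-1/2}$ and $e^u H(1)^{-1/2}$ composed with a bounded spectral multiplier. The paper does not compute your $g(\lambda)=\arctan(1/(2\sqrt\lambda))$ explicitly but instead uses the equivalent observation that $F(\lambda)=\tfrac{2}{\pi}\int_0^{1/2}(t^2+\lambda)^{-1}\,dt\leq\lambda^{-1/2}$, which is the same factorization in disguise.

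For the identification \eqref{eq:31}, your argument works but the paper's is cleaner and sidesteps the Fubini issue you flag: rather than factoring and interchanging, the paper first applies Lemma~\ref{lm:6}\ref{lm:6_i:F(Delta)} to the truncated integral $\int_\varepsilon^{1/2}(t^2+H(\sqrt\Lapl))^{-1}\,dt=\int_\varepsilon^{1/2}(t^2+\Delta)^{-1}\,dt$ (a bounded operator identity), then applies $\partial_u$ or $e^u\sqrt\Lapl$ to both sides on a dense domain, and only afterwards lets $\varepsilon\to 0^+$. This avoids having to push the spectral decomposition through a non-norm-convergent integral.
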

\begin{proof}
First, we justify that both $M_j(1)$, $j=0,1$, are bounded on $L^2(\RR)$.
We argue much as in the proof of \cite[Proposition~4.1]{MaVa} and denote by $\Dom(T)$ the domain of an operator $T$ on $L^2(\RR)$. 

Write $H=H(1)$ for brevity, and recall that  both $\partial_u H^{-1/2}$ and $V^{1/2} H^{-1/2}$, where $V(u)=e^{2u}$, are bounded on $L^2(\RR)$ (even on $L^p(\RR)$, $p\in(1,\infty)$; see \cite[Theorem~1.3]{Ma23}). In particular $\Dom(H^{1/2}) \subseteq \Dom(\partial_u) \cap \Dom(V^{1/2})$ and the formula 
	\begin{equation*}
	\big\Vert H^{1/2} f\Vert_{L^2(\RR)}^2 = \big\Vert \partial_u f\Vert_{L^2(\RR)}^2+\big\Vert V^{1/2} f\Vert_{L^2(\RR)}^2,
	\end{equation*}
	holds for all $f\in \Dom(H^{1/2})$.
	
Let $F$ be a nonnegative Borel function on $(0,\infty)$ such that $F(\lambda) \leq \lambda^{-1/2}$ for all $\lambda \in (0,\infty)$. Since, by the properties of the spectral calculus, for any $f\in L^2(\RR)$ we have $F(H)f\in\Dom(H^{1/2})$, we deduce that
	\begin{equation*}
	\big\Vert \partial_u (F(H) f)\Vert_{L^2(\RR)},\big\Vert V^{1/2} F(H) f\Vert_{L^2(\RR)}\leq \Vert f\Vert_{L^2(\RR)}, \qquad f\in L^2(\RR).
	\end{equation*}
	
Recall that
	\begin{equation*}
	\lambda^{-1/2} = \frac{2}{\pi}\int_0^\infty (t^2+\lambda)^{-1}\,\dd t.
	\end{equation*}
In particular, if we take
	\begin{equation*}
	F(\lambda) =\frac{2}{\pi} \int_0^{1/2} (t^2+\lambda)^{-1}\,\dd t,
	\end{equation*}
then $F(\lambda) \leq \lambda^{-1/2}$ as required. Thus, we see that the operators 
	\begin{equation*}
	\partial_u F(H) = \int_0^{1/2} \partial_u (t^2+H)^{-1}\,\dd t,\qquad V^{1/2} F(H) = \int_0^{1/2} e^{u} (t^2+H)^{-1}\,\dd t
	\end{equation*}
	are bounded on $L^2(\RR)$. Since
	\begin{equation*}
	\left(\int_0^{1/2} \partial_u (t^2+H)^{-1}\,\dd t\right)^\ast = - \int_0^{1/2} (t^2+H)^{-1}\partial_u\,\dd t
	\end{equation*}
	we conclude that the $M_j(1)$ are bounded on $L^2(\RR)$.
	
	By Lemma \ref{lm:6}\ref{lm:6_i:F(H(xi))} and the fact that 
	\begin{equation*}
	S(\xi) f = T_{\log \xi} S(1) T_{-\log \xi},
	\end{equation*}
	where $S(\xi)$ is either of the two operators $f(u)\mapsto f'(u)$ or $f(u)\mapsto \xi e^u f(u)$, we obtain that
	\begin{equation}\label{eq:26}
	M_j(\xi) = T_{\log \xi} M_j(1) T_{-\log \xi},\qquad \xi> 0,\ j=0,1.
	\end{equation}
	Thus, arguing as in the proof of Lemma \ref{lm:6}\ref{lm:6_i:F(H)cont}, we see that the $M_j(\xi)$ are continuous and uniformly bounded. Consequently, the operators $M_j(\sqrt{\Lapl})$, $j=0,1$, defined as in \eqref{eq:opval_fc}, are bounded on $L^2(G)$.
	
	Finally, notice that, by Lemma \ref{lm:6}\ref{lm:6_i:F(Delta)},
	\begin{equation*}
	\int_{\varepsilon}^{1/2} \big(t^2+H(\sqrt{\Lapl})\big)^{-1} \,\dd t = \int_{\varepsilon}^{1/2} ( t^2 + \Delta)^{-1} \,\dd t
	\end{equation*}
  for all $\varepsilon \in (0,1/2)$.
	Thus, at least on dense domain, we also have
\begin{align*}
\int_{\varepsilon}^{1/2} e^u\sqrt{\Lapl}\big(t^2+H(\sqrt{\Lapl})\big)^{-1} \,\dd t &= \int_{\varepsilon}^{1/2} e^u \sqrt{\Lapl}( t^2 + \Delta)^{-1} \,\dd t,\\
\int_{\varepsilon}^{1/2} \partial_u\big(t^2+H(\sqrt{\Lapl})\big)^{-1} \,\dd t &= \int_{\varepsilon}^{1/2} \partial_u ( t^2 + \Delta)^{-1} \,\dd t,\\
\int_{\varepsilon}^{1/2} \big(t^2+H(\sqrt{\Lapl})\big)^{-1}\partial_u \,\dd t &= \int_{\varepsilon}^{1/2} ( t^2 + \Delta)^{-1}\partial_u \,\dd t.
\end{align*}
If we take the limit
as $\varepsilon \to 0^+$, by comparing \eqref{eq:tRiesz} and \eqref{eq:M_j-def} we obtain \eqref{eq:31}.
\end{proof}

The identities \eqref{eq:31} show that the operators $\widetilde{\Riesz}^\infty_j$, to which we reduced (see Proposition \ref{prop:8}) the study of the Riesz transforms, are particular operator-valued functions of $\sqrt{\Lapl}$, corresponding to the operator-valued multipliers $M_j$. As $\sqrt{\Lapl}$ has an $\hormander{s}$-bounded functional calculus on $L^p(N)$ for any $p \in (1,\infty)$ and $s > Q/2$ \cite{Ch,MaMe}, we can apply Theorem \ref{thm:V-VSMT} to reduce the $L^p(G)$-boundedness 
of the $\widetilde{\Riesz}^\infty_j$ to appropriate bounds on the $M_j$ and their homogeneous derivatives. Namely,  in view of Theorem \ref{thm:V-VSMT} it suffices to justify that 
\begin{equation}\label{eq:23}
\sup_{\xi\in\Rpos} \big\Vert (\xi\partial_\xi)^n M_j(\xi)\big\Vert_{\LinBnd(L^2(w))} \lesssim_{n,[w]_{A_2}} 1
\end{equation}
for $j=0,1$, all $n \in \NN$ and all $w \in A_2(\RR)$.

\section{Kernel estimates for the operator-valued multipliers}\label{S:M_j-estimates}

In this section we shall prove \eqref{eq:23} and, consequently, Theorem \ref{thm:main}. In order to do so we will justify that the $M_j(\xi)$ are integral operators associated with kernels satisfying appropriate bounds.

Let $M_j(\xi,t)$, for $t\in(0,1/2)$, $\xi\in\Rpos$, and $j=0,1$, be the $L^2(\RR)$-bounded operators given by
\begin{equation}\label{eq:def_Mjt}
\begin{aligned}
M_0(\xi,t) &=\partial_u \big(t^2+H(\xi)\big)^{-1}+\big(t^2+H(\xi)\big)^{-1}\partial_u,\\
M_1(\xi,t) &= e^u \big(t^2+H(\xi)\big)^{-1},
\end{aligned}
\end{equation}
so that we have
\begin{equation}\label{eq:Mj_int}
M_j(\xi) = \int_0^{1/2} M_j(\xi,t)\,\dd t.
\end{equation}

It is known (see \cite{Hu} or \cite[\S 4.15]{Ti}) that the resolvent operators $(t^2+H(1))^{-1}$ are integral operators, whose kernels are explicitly given by
\begin{equation}\label{eq:Rsl}
\iker_{(t^2+H(1))^{-1}}(u,v) = \Rsl_t(u,v)\defeq I_t\big(e^{\min(u,v)}\big) K_t\big( e^{\max(u,v)}\big),
\end{equation}
where $I_t$ and $K_t$ are modified Bessel functions (see Section \ref{S:Appendix} below for details). These formulas are the starting point of our analysis.

\begin{prop}
The operators $M_j(\xi,t)$, $\xi\in\Rpos$, $t\in(0,1)$, are integral operators, whose integral kernels satisfy
\begin{equation}\label{eq:32}
\iker_{M_j(\xi,t)}(u,v) = \iker_{M_j(1,t)}(u+\log\xi,v+\log\xi).
\end{equation}
Moreover,	
\begin{equation}\label{eq:33}
\iker_{M_1(1,t)}(u,v)= e^u \Rsl_t(u,v) = e^u I_t\big(e^{\min(u,v)}\big) K_t\big( e^{\max(u,v)}\big)
\end{equation}
and
\begin{equation}\label{eq:30}
\begin{split}
	\iker_{M_0(1,t)}(u,v) &= (\partial_u-\partial_v) \Rsl_t(u,v) \\
	&= \sign(v-u) \Big(e^{\min(u,v)} I_{t+1}\big(e^{\min(u,v)}\big) K_{t}\big( e^{\max(u,v)}\big) \\
	&\qquad\qquad\qquad+ e^{\max(u,v)} I_t\big(e^{\min(u,v)}\big) K_{t+1}\big( e^{\max(u,v)}\big)\Big).
\end{split}
\end{equation}
Furthermore, the kernels $\iker_{M_j(\xi,t)}(u,v)$ are smooth in the direction $\partial_u+\partial_v$.
\end{prop}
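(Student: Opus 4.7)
First, I would establish the translation identity \eqref{eq:32}. By Lemma~\ref{lm:6}\ref{lm:6_i:F(H(xi))}, $(t^2+H(\xi))^{-1} = T_{\log\xi}\,(t^2+H(1))^{-1}\,T_{-\log\xi}$. Since $\partial_u$ commutes with translations, this immediately gives $M_0(\xi,t) = T_{\log\xi}\,M_0(1,t)\,T_{-\log\xi}$. For $M_1(\xi,t)$, the elementary commutation $T_a M_{e^u} = e^a M_{e^u} T_a$ lets me absorb the factor $\xi$ from \eqref{eq:M_j-def} into the conjugation, so that $M_1(\xi,t) = T_{\log\xi}\,M_1(1,t)\,T_{-\log\xi}$ as well. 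Passing to integral kernels, conjugation by $T_{\log\xi}$ translates both arguments by $\log\xi$, which is exactly \eqref{eq:32}.

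Next, for the kernels at $\xi = 1$, I use the explicit formula \eqref{eq:Rsl}. Formula \eqref{eq:33} is immediate: left-multiplication by $e^u$ merely multiplies the kernel $\Rsl_t(u,v)$ by $e^u$. For \eqref{eq:30}, the kernel of $\partial_u R$ (with $R = (t^2+H(1))^{-1}$) is $\partial_u \Rsl_t(u,v)$, while that of $R \partial_u$ is $-\partial_v \Rsl_t(u,v)$ via integration by parts in $v$, justified by the exponential decay of the Bessel factors at the endpoints and the absence of any delta contribution since $\Rsl_t$ is continuous across the diagonal. Hence $\iker_{M_0(1,t)} = (\partial_u - \partial_v)\Rsl_t$. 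To derive \eqref{eq:30}, I evaluate on $\{u < v\}$, where $\Rsl_t = I_t(e^u) K_t(e^v)$, compute both partials by the chain rule, and apply the recurrences $xI_t'(x) = xI_{t+1}(x) + tI_t(x)$ and $xK_t'(x) = -xK_{t+1}(x) + tK_t(x)$; the two $t I_t K_t$ contributions cancel, leaving precisely $e^u I_{t+1}(e^u) K_t(e^v) + e^v I_t(e^u) K_{t+1}(e^v)$. The symmetry of $\Rsl_t$ in $(u,v)$ implies that $\partial_u - \partial_v$ is antisymmetric on it, so on $\{u > v\}$ the same expression arises with the opposite sign, which the $\sign(v-u)$ factor encodes.

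Finally, for smoothness in the direction $\partial_u + \partial_v$, I change coordinates to $s = u+v$ and $r = u-v$, so that $\min(u,v) = (s-|r|)/2$ and $\max(u,v) = (s+|r|)/2$. For any fixed $r \neq 0$, both kernels \eqref{eq:33} and \eqref{eq:30} become products of exponentials and of $I_t, K_t$ evaluated at $C^\infty$ functions of $s$; since $I_t, K_t \in C^\infty(\Rpos)$, the kernels are therefore $C^\infty$ in $s$, i.e., in the direction $\partial_u + \partial_v$, on each half-plane $\{r > 0\}$ and $\{r < 0\}$.

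The chief technicality is the handling of derivatives of $\Rsl_t$ across the diagonal. Although $\Rsl_t$ is only $C^0$ (not $C^1$) there, no delta contribution appears in $(\partial_u - \partial_v)\Rsl_t$; the jump of $\partial_v \Rsl_t$ across $\{u = v\}$ does occur, of size $-1$ by the Wronskian identity $I_t'(x) K_t(x) - I_t(x) K_t'(x) = 1/x$ evaluated at $x = e^u$, and it is precisely this jump that manifests as the $\sign(v-u)$ prefactor in \eqref{eq:30}.
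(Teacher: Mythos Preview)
Your argument is correct and follows essentially the same route as the paper's proof: the conjugation identity $M_j(\xi,t)=T_{\log\xi}M_j(1,t)T_{-\log\xi}$ for \eqref{eq:32}, the resolvent kernel \eqref{eq:Rsl} together with the Bessel recurrences \eqref{eq:Bessel_rec} (packaged in the paper as \eqref{eq:1st_der_IK}) for \eqref{eq:30}, and the observation that $\partial_u+\partial_v$ is tangent to the diagonal for the smoothness claim. You supply more detail than the paper does---in particular the justification that no boundary term arises in the integration by parts for $R\partial_u$ and the explicit cancellation of the $tI_tK_t$ terms---but the underlying ideas coincide.
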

\begin{proof}
	Observe that \eqref{eq:33} follows immediately from \eqref{eq:def_Mjt} and \eqref{eq:Rsl}. In a similar way, since $\min(u,v)$ and $\max(u,v)$ are Lipschitz functions of $(u,v)$, \eqref{eq:30} follows from the identity \eqref{eq:1st_der_IK} via the change of variables $x = e^{\min(u,v)}$, $y=e^{\max(u,v)}$.
	
Notice that much as in \eqref{eq:26} we have
\[
	M_j(\xi,t) = T_{\log\xi} M_j(1,t)T_{-\log\xi},\qquad \xi\in\Rpos,\ j=0,1,\ t\in(0,1),
\]
whence \eqref{eq:32} follows.
	
Finally, since the direction of the derivative $\partial_u+\partial_v$ is parallel to the diagonal, immediately from \eqref{eq:33}, \eqref{eq:30}, and in view of \eqref{eq:Bessel_rec}, we obtain the smoothness of $\iker_{M_j(1,t)}(u,v)$ in the direction $\partial_u+\partial_v$. By \eqref{eq:32} the same is true for $\iker_{M_j(\xi,t)}(u,v)$.
\end{proof}

Define
\begin{equation}\label{eq:27}
	S_j^n (t,u,v) \defeq (\partial_u+\partial_v)^n \iker_{M_j(1,t)}(u,v).
\end{equation}
Notice that, by \eqref{eq:32},
\begin{equation}\label{eq:Sjnt_der}
S_j^n(t,u+\log\xi,v+\log\xi) = (\xi\partial_\xi)^n \iker_{M_j(\xi,t)}(u,v).
\end{equation}
In the following lemma we give pointwise bounds for the kernels $S_j^n (t,u,v)$.

\begin{lm}\label{lm:3}
Let $c>0$. If $(n,j)\in\NN\times\{0,1\}\setminus\{(0,0) \}$, then 
\begin{equation}\label{eq:est_S_nj}
\begin{split}
	\big|S_j^n(t,u,v) \big|\lesssim_{n,c} 
  \begin{cases}
	1 &\text{if } |u-v|\leq c,\\
	e^{-(u+v)/2}  &\text{if }  |u-v|>c,\ u,v>0,\\
	e^{-e^{\max(u,v)}/2} e^{-t\left|\min(u,v)\right|}  &\text{if }  |u-v|>c,\ uv\leq 0,  \\
	e^{t(u+v)}+ \big(e^{-|u|} + e^{-|v|}\big) e^{-t|u-v|} &\text{if }  |u-v|>c,\ u,v<0,
  \end{cases}
\end{split}
\end{equation}
	uniformly in $t\in(0,1/2)$. Moreover, for $(n,j)=(0,0)$ we have
\begin{equation}\label{eq:est_S_00}
\begin{split}
	\big| S_0^0(t,u,v)\big|\lesssim_{c} 
	\begin{cases}
	1 &\text{if }  |u-v|\leq c,\\
	e^{-(u+v)/2} &\text{if }  |u-v|>c,\ u,v>0,\\
	e^{-e^{\max(u,v)}/2} e^{-t\left|\min(u,v)\right|} &\text{if }  |u-v|>c,\ uv\leq 0,  \\
	 e^{-t|u-v|} &\text{if }  |u-v|>c,\ u,v<0,
	\end{cases}
\end{split}
\end{equation}
	uniformly in $t\in(0,1/2)$.
\end{lm}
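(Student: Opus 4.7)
The plan is to work directly from the explicit kernel formulas \eqref{eq:33} and \eqref{eq:30} and reduce everything to pointwise estimates for modified Bessel functions, as provided by Section \ref{S:Appendix}. The key observation is that on the complement of the diagonal $\{u=v\}$ the vector field $\partial_u+\partial_v$ preserves $|u-v|$ and $\sign(v-u)$; in the coordinates $x=e^{\min(u,v)}$, $y=e^{\max(u,v)}$ it acts as $x\partial_x+y\partial_y$. Using the standard recurrences $zI_\nu'(z)=\nu I_\nu(z)+zI_{\nu+1}(z)$ and $zK_\nu'(z)=\nu K_\nu(z)-zK_{\nu+1}(z)$, an easy induction shows that, for $u\neq v$, $S_j^n(t,u,v)$ is a finite linear combination---with coefficients polynomial in $t$ of degree at most $n$, hence uniformly bounded on $(0,1/2]$---of terms of the form
\[
x^{a+\alpha}\,y^{b+\beta}\,I_{t+k+\alpha}(x)\,K_{t+\ell+\beta}(y),\qquad \alpha,\beta\in\NN,\ \alpha+\beta\le n,
\]
where $(a,b,k,\ell)$ is one of the quadruples $(1,0,0,0),(0,1,0,0),(1,0,1,0),(0,1,0,1)$ read off from the initial kernels \eqref{eq:33} and \eqref{eq:30}.

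\medskip

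\textbf{Case analysis.} In each region I would plug in the corresponding uniform-in-$t$ Bessel asymptotics from the Appendix and track the various factors. For $|u-v|\le c$, $x\simeq y$; splitting into $y\lesssim 1$ and $y\gtrsim 1$ and using, respectively, small- and large-argument asymptotics, the positive powers $x^{a+\alpha}y^{b+\beta}$ (via $x\simeq y$) absorb the singular factor $y^{-t-\ell-\beta}$ from $K_{t+\ell+\beta}$ in the former, while in the latter the exponential $e^{x-y}/\sqrt{xy}$ coming from $I_{t+k+\alpha}(x)K_{t+\ell+\beta}(y)$ dominates any polynomial prefactor, yielding $\lesssim 1$. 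For $|u-v|>c$ and $u,v>0$, both $x,y\ge 1$ and only the large-argument asymptotics apply, giving $I_{t+k+\alpha}(x)K_{t+\ell+\beta}(y)\lesssim e^{x-y}/\sqrt{xy}$; combined with $\sqrt{xy}=e^{(u+v)/2}$ and the super-polynomial decay $e^{x-y}$, this produces $\lesssim e^{-(u+v)/2}$. For $|u-v|>c$ and $uv\le 0$, we have $x\le 1\le y$; the small-argument bound $I_{t+k+\alpha}(x)\lesssim x^t$ (with the extra factor $x^{k+\alpha}\le 1$) translates to $e^{-t|\min(u,v)|}$, while the large-argument estimate $K_{t+\ell+\beta}(y)\lesssim e^{-y}/\sqrt{y}$ combined with $y^{b+\beta}e^{-y/2}\lesssim 1$ leaves the factor $e^{-e^{\max(u,v)}/2}$. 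For $|u-v|>c$ and $u,v<0$, both $x,y\le 1$; here the small-argument expansion of $K_{t+\ell+\beta}(y)$ must be used in its full two-term form, combining a $(y/2)^{-t-\ell-\beta}$ contribution and a subleading $(y/2)^{t+\ell+\beta}$-type one (needed for uniformity in $t$). After combining with the $(x/2)^{t+k+\alpha}$ factor from $I_{t+k+\alpha}(x)$, these two contributions give rise to terms scaling as $(x/y)^t=e^{-t|u-v|}$ and $(xy)^t\simeq e^{t(u+v)}$; together with the polynomial prefactors $x^{a+\alpha}y^{b+\beta}$, which correspond to bounded negative powers of $e^{-|u|}$ and $e^{-|v|}$ for $u,v<0$, these produce the bound $e^{t(u+v)}+(e^{-|u|}+e^{-|v|})e^{-t|u-v|}$ of \eqref{eq:est_S_nj}. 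For the distinguished case $(n,j)=(0,0)$ in \eqref{eq:est_S_00}, only the two specific terms from \eqref{eq:30} are present, and a direct calculation shows that the leading $(x/y)^t$ contribution of summand $2$ (namely $yI_t(x)K_{t+1}(y)\sim(x/y)^t$) is bare, with no polynomial prefactor, while all the remaining contributions (including the subleading $(xy)^t$-type ones) are $\lesssim e^{-t|u-v|}$ in this regime, giving the sharper bound $e^{-t|u-v|}$.

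\medskip

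\textbf{Main obstacle.} The main technical obstacle is maintaining uniformity in $t\in(0,1/2]$, since the standard small-argument asymptotic $K_t(z)\sim\Gamma(t)(z/2)^{-t}/2$ develops a $1/t$ singularity as $t\to 0^+$; this must be handled via the uniform-in-$t$ Bessel bounds provided by the Appendix, which exploit the cancellation between the $\Gamma(t)(z/2)^{-t}/2$ and $\Gamma(1-t)(z/2)^t/(2t)$ contributions that produces the regular limit $K_0(z)\sim-\log(z/2)$. A further delicate point, especially relevant for $S_0^n$ with $n\ge 1$, is the careful bookkeeping of cancellations between the two summands of \eqref{eq:30} (and their derivatives) in region (iv): only after one combines the corresponding ``scalar'' and ``cross'' derivative contributions from both summands do the ``bare'' $e^{-t|u-v|}$ terms cancel out, leaving only the smaller contributions that fit within the tighter bound of \eqref{eq:est_S_nj}.
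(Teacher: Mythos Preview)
Your term-by-term strategy has a genuine gap in the region $|u-v|\le c$ with $u,v$ large. Consider for instance $S_1^1$ (the case $u<v$): expanding via the recurrences as you propose gives
\[
(1+2t)\,x I_t(x)K_t(y)+x^2 I_{t+1}(x)K_t(y)-xy\,I_t(x)K_{t+1}(y),
\]
and each of the last two summands individually is $\sim x^{3/2}y^{-1/2}e^{x-y}\sim y\,e^{-(y-x)}$ for large $x\simeq y$. Your claim that ``the exponential $e^{x-y}/\sqrt{xy}$ dominates any polynomial prefactor'' is false here: on the thin strip where $|u-v|\lesssim e^{-\max(u,v)}$ one has $y-x=O(1)$, so $e^{-(y-x)}$ provides no decay while the prefactor $y$ blows up. The bound $|S_1^1|\lesssim 1$ in this region therefore comes from a cancellation between the two large terms, which your expansion destroys. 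The same issue arises for every $S_j^n$ with $n\ge 1$: the top-order terms in your expansion have $a+b+\alpha+\beta\ge 2$ and hence grow like $y^{a+b+\alpha+\beta-1}$ near the diagonal.

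The paper avoids this in two ways. First, for $S_0^n$ with $n\ge 1$ it uses the identity
\[
(\partial_u+\partial_v)(\partial_u-\partial_v)\Rsl_t(u,v)=(e^{2u}-e^{2v})\Rsl_t(u,v),
\]
which immediately converts $S_0^n$ into a multiple of $(\partial_u+\partial_v)^{n-1}\bigl[(e^{2u}-e^{2v})\Rsl_t\bigr]$; the explicit factor $e^{2u}-e^{2v}$ vanishes on the diagonal and is exactly the cancellation you allude to for region~(iv), but it is equally essential in region~(i). Second, and more fundamentally, for regions~(i) and~(ii) the paper does \emph{not} separate $I_t(x)K_t(y)$ into individual Bessel factors; instead it invokes Lemma~\ref{lm:1}, which is based on the integral representation
\[
I_t(x)K_t(y)=\int_0^\infty J_{2t}\bigl(2\sqrt{xy}\sinh s\bigr)\,e^{-(y-x)\cosh s}\,\dd s
\]
and yields directly
\[
\bigl|(x\partial_x+y\partial_y)^N I_t(x)K_t(y)\bigr|\lesssim_N \frac{(1+y-x)^{N+1}e^{-(y-x)}}{\sqrt{xy}}.
\]
This bound is uniform in $t$ and already incorporates the cancellation; multiplying by the harmless prefactor $(1+|e^u-e^v|)(e^u+e^v)$ (to which both $S_1^n$ and $S_0^n$ for $n\ge1$ are reduced) then gives the first two cases of \eqref{eq:est_S_nj} at once. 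Your approach via separate large-argument asymptotics for $I_{t+k+\alpha}$ and $K_{t+\ell+\beta}$ cannot reproduce this without re-summing the expansion, which amounts to redoing Lemma~\ref{lm:1}. Regions~(iii) and~(iv), by contrast, are handled in the paper essentially as you describe (term by term, via Lemmas~\ref{lm:2} and~\ref{lm:5}), once the reduction above has been made.
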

\begin{proof}
Let us first discuss a preliminary reduction for the estimates \eqref{eq:est_S_nj}. For $n\in \NN$, by \eqref{eq:27} and \eqref{eq:33} we have
\begin{equation*}
\big|S_1^n(t,u,v)\big|\lesssim_n \sum_{k=0}^n  e^u\Big|\big(\partial_u+\partial_v\big)^k \Rsl_t(u,v)\Big|.
\end{equation*}
Moreover, from \eqref{eq:Rsl} and \eqref{eq:2nd_der_IK} we deduce that
\begin{equation*}
	(\partial_u +\partial_v)(\partial_u - \partial_v) \Rsl_t(u,v)= \big(e^{2u}-e^{2v}\big) \Rsl_t(u,v);
\end{equation*}
thus, if $n\geq 1$, then by \eqref{eq:27} and \eqref{eq:30} there holds
	\begin{multline*}
	\big|S_0^n(t,u,v)\big| = \Big|\big(\partial_u+\partial_v\big)^{n-1} (e^{2u}-e^{2v}) \Rsl_t(u,v)\Big|\\
	 \lesssim_n  \sum_{k=0}^{n-1} \big|e^{2u}-e^{2v}\big| \Big|\big(\partial_u+\partial_v\big)^k \Rsl_t(u,v)\Big|.
	\end{multline*}
Consequently, if we set	
\begin{equation*}
	I_n(t,u,v)\defeq \big|(\partial_u+\partial_v)^n \Rsl_t(u,v)\big|, \quad 	
	I_n'(t,u,v)\defeq (1+|e^u-e^v|) (e^u+e^v) I_n(t,u,v),
\end{equation*}
then the estimates \eqref{eq:est_S_nj} for the kernels $S^n_j(t,u,v)$ are reduced to analogous estimates for the quantities $I_n'(t,u,v)$.

We shall now prove the estimates \eqref{eq:est_S_nj} for the kernels $I_n'(t,u,v)$, together with the estimates \eqref{eq:est_S_00} for the kernel $S_0^0(t,u,v) = \iker_{M_0(1,t)}(u,v)$ given by \eqref{eq:30}. By symmetry, it is enough to consider the case $v > u$. Recall throughout that we assume $t \in (0,1/2)$ and we require uniform estimates in $t$.
	
To deal with the first two cases in the estimates, we shall apply Lemma \ref{lm:1}. Notice that $(\partial_u+\partial_v)^n\Rsl_t$ corresponds to $(x\partial_x+y\partial_y)^n I_t(x)K_t(y)$ with the substitution $x=e^u$, $y=e^v$. Therefore, \eqref{eq:5} gives
\begin{equation*}
	I_n(t,u,v) \lesssim_n (1+e^v-e^u)^{n+1} e^{e^u-e^v} e^{-(u+v)/2}
\end{equation*}
and
\begin{equation}\label{eq:8}
\begin{split}
	I_n'(t,u,v)
	&\lesssim_n (e^u+e^v)  (1+e^v-e^u)^{n+2} e^{e^u-e^v} e^{-(u+v)/2}\\
	&\lesssim_n \cosh\left( \frac{v-u}{2} \right) e^{-(e^v-e^u)/2}
\end{split}
\end{equation}
for $u < v$ and $t\in(0,1/2)$.
Clearly, in the region $v-u\leq c$, this gives 
\begin{equation*}
	I_n'(t,u,v)\lesssim_{n,c} 1.
\end{equation*}
Moreover, if $v-u\geq c$ and $u,v>0$, then, by using the fact that
\begin{equation*}
	e^v-e^u = 2e^{(u+v)/2} \sinh\left(\frac{v-u}{2}\right),
\end{equation*}
from \eqref{eq:8} we obtain that
\begin{equation*}
 I_{n}'(t,u,v) \lesssim_{n}  e^{-(u+v)/2}\coth\left( \frac{v-u}{2}\right) \lesssim_c e^{-(u+v)/2}.
\end{equation*}
This proves the first two cases of \eqref{eq:est_S_nj} for $I_n'(t,u,v)$.	
For the kernel $S_0^0(t,u,v)$, instead, we apply \eqref{eq:6} with $N=0$ to obtain, much as above, that
\begin{equation*}
\big| S_0^0(t,u,v)\big| \lesssim \cosh\left(\frac{v-u}{2}\right)  e^{-(e^v-e^u)/2},
\end{equation*}
whence we deduce the first two cases of \eqref{eq:est_S_00}.
	
Now we consider the third region, where $u,v \in \RR$ satisfy $u \leq 0 \leq v$ and $v-u>c$. In particular we have $e^v \pm e^u \simeq_c e^v \geq 1$. Thus, by the first bound in Lemma \ref{lm:2} with $\varepsilon=1/4$ we get
\begin{equation*}
\begin{split}
I_n'(t,u,v) \simeq_c e^{2v} I_n(t,u,v) 
&\lesssim_{n} e^{2v} e^{(u+v)t} e^{-3(e^v-e^u)/4} \\
&\lesssim e^{5v/2-3e^v/4} e^{ut} \lesssim  e^{-e^v/2} e^{tu},
\end{split}
\end{equation*}
where we used that $u \leq 0 \leq v$ and $t \in (0,1/2)$. This proves the third case in \eqref{eq:est_S_nj} for $I_n'(t,u,v)$.
On the other hand, by the second bound in Lemma \ref{lm:2}, applied with $N=0$ and $\varepsilon=1/4$, we also deduce that
\begin{equation*}
	\big|S^0_0(t,u,v)\big| \lesssim_c  e^v e^{(u+v)t} e^{-3(e^v-e^u)/4}  \lesssim e^{-e^v/2} e^{tu},
\end{equation*}
i.e., the third case in \eqref{eq:est_S_00}.
	
We are left with the last case, namely the region where $u<v<0$ and $v-u>c$. We remark that we cannot apply Lemma \ref{lm:2} here, as $e^v - e^u\simeq_c e^v$ may be arbitrarily small in this region. 
On the other hand, from \eqref{eq:higher_der_I_K} we deduce that
\begin{equation*}
	I_n(t,u,v) \lesssim_n \sum_{k,\ell=0}^n e^{k u+\ell v} I_{t+k}\big( e^u \big) K_{t+\ell}\big( e^v \big).
\end{equation*}
Notice now by Lemma \ref{lm:5} that, for all $u,v \leq 0$ and $t \in (0,1/2)$,
\begin{equation}\label{eq:13}
	e^{ku+\ell v} I_{t+k}\big( e^{u}\big) K_{t+\ell}\big( e^{v}\big)
	\lesssim_{k,\ell} \begin{cases} 
	e^{2ku}  e^{-(v-u)t} &\text{if } \ell \neq 0,\\
	e^{2k u-v}  e^{(u+v)t} &\text{if } \ell = 0.
	\end{cases}
\end{equation}
Thus, if $u < v < 0$ and $v-u>c$, then
\begin{equation*}
	I_n'(t,u,v) \simeq e^v I_n(t,u,v) \lesssim e^{t(u+v)}+ \big(e^{-|u|} + e^{-|v|}\big) e^{-t|u-v|},
\end{equation*}
which proves the last case of \eqref{eq:est_S_nj}. 

It remains to prove the last case of \eqref{eq:est_S_00}. Starting from \eqref{eq:30}, if we use \eqref{eq:13} for $(k,\ell) = (1,0)$ and $(k,\ell) = (0,1)$, we obtain, much as above,
\begin{equation*}
	\big| S_0^0(t,u,v)\big| \lesssim e^{2u-v} e^{t(u+v)}+  e^{-t|u-v|}\lesssim e^{-t|u-v|},
\end{equation*}
as required.
\end{proof}

Let $n\in\NN$ and $j\in\{0,1\}$. We define
\[
S_j^n(u,v) \defeq \int_0^{1/2} S_j^n(t,u,v)\,\dd t.
\]
From \eqref{eq:Mj_int} and \eqref{eq:Sjnt_der} we deduce that
\begin{equation}\label{eq:Snj_kerder}
S_j^n(u+\log\xi,v+\log\xi) = (\xi\partial_\xi)^n \iker_{M_j(\xi)}(u,v).
\end{equation}
As we shall explain in Proposition \ref{prop:5}, the kernels $(u,v) \mapsto S_j^n(u+\log\xi,v+\log\xi)$ are actually the integral kernels of the operators $(\xi\partial_\xi)^n M_j(\xi)$.

Before that, we transfer the bounds from Lemma \ref{lm:3} for the functions $S_j^n(t,u,v)$ to the functions $S_j^n(u,v)$.

\begin{cor}\label{cor:1}
Let $n\in\NN$, $j\in\{0,1\}$, and $c>0$. If $(n,j)\neq (0,0)$, then
\begin{equation}\label{eq:12}
\begin{split}
\big|S_j^n(u,v)\big|\lesssim_{n,c} 
\begin{cases}
	1, & |u-v|\leq c,\\
	e^{-(u+v)/2} ,& |u-v|>c,\ u,v>0,\\
	\frac{e^{-e^{\max(u,v)}/2}}{\left|\min(u,v)\right|+1} ,& |u-v|>c,\ uv\leq 0,  \\
	\frac{1}{|u|+|v|+1}+ \frac{e^{-|u|} + e^{-|v|}}{|u-v|+1} ,& |u-v|>c,\ u,v<0,
\end{cases}
\end{split}
\end{equation}
whereas if $(n,j)=(0,0)$, then
\begin{equation*}
\begin{split}
	\big|S_0^0(u,v)\big|\lesssim_{n,c} 
\begin{cases}
	1, & |u-v|\leq c,\\
	e^{-(u+v)/2} ,& |u-v|>c,\ u,v>0,\\
	\frac{e^{-e^{\max(u,v)}/2}}{\left|\min(u,v)\right|+1} ,& |u-v|>c,\ uv\leq 0,  \\
	 \frac{1}{|u-v|+1} ,& |u-v|>c,\ u,v<0.
\end{cases}
\end{split}
\end{equation*}
\end{cor}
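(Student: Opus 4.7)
The corollary amounts to integrating the pointwise bounds of Lemma \ref{lm:3} in $t$ over $(0,1/2)$. The plan is to go through the four geometric regions in turn and observe that in each region the bound on $S_j^n(t,u,v)$ either is already $t$-independent, or depends on $t$ only through an exponential factor $e^{-ta}$ with $a \geq 0$, which integrates to a harmless $(a+1)^{-1}$.

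The elementary integral estimate behind everything is
\begin{equation*}
\int_0^{1/2} e^{-ta} \,\dd t = \frac{1 - e^{-a/2}}{a} \lesssim \min\!\left(1,\frac{1}{a}\right) \lesssim \frac{1}{1+a}, \qquad a \geq 0,
\end{equation*}
together with the trivial bound $\int_0^{1/2} 1 \,\dd t = 1/2$. I would treat the four regions separately. In the region $|u-v| \leq c$, the bound in Lemma \ref{lm:3} is $\lesssim_{n,c} 1$, which integrates to a constant. In the region $|u-v| > c$ with $u,v > 0$, the bound $e^{-(u+v)/2}$ is $t$-independent and integrates to the same quantity. In the region $|u-v|>c$ with $uv \leq 0$, the bound $e^{-e^{\max(u,v)}/2} e^{-t|\min(u,v)|}$ integrates, by the display above with $a = |\min(u,v)|$, to a multiple of $e^{-e^{\max(u,v)}/2}/(1+|\min(u,v)|)$, exactly as claimed.

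The only region where something slightly different happens is $|u-v|>c$ with $u,v<0$. For $(n,j) \neq (0,0)$, the Lemma gives two summands: $e^{t(u+v)} = e^{-t(|u|+|v|)}$, which integrates to $\lesssim 1/(1+|u|+|v|)$, and $(e^{-|u|}+e^{-|v|})e^{-t|u-v|}$, which integrates to $\lesssim (e^{-|u|}+e^{-|v|})/(1+|u-v|)$; adding gives the stated estimate. For $(n,j)=(0,0)$, the Lemma only provides the single summand $e^{-t|u-v|}$, whose integral is $\lesssim 1/(1+|u-v|)$, matching the fourth line of the second formula.

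No real obstacle is expected; the slight subtlety is just keeping track of which region yields a $t$-independent bound and which yields a genuine $t$-decay, and correctly reading off the base $a$ from Lemma \ref{lm:3} in the latter case. Since the bounds in Lemma \ref{lm:3} are uniform in $t \in (0,1/2)$, one may swap absolute value and integral in $S_j^n(u,v)=\int_0^{1/2} S_j^n(t,u,v)\,\dd t$ and apply the region-by-region estimates directly.
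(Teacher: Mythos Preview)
Your argument is correct and is precisely the intended one: the paper states this result as a corollary of Lemma~\ref{lm:3} with no further proof, and the only content is exactly the termwise integration in $t$ over $(0,1/2)$ via $\int_0^{1/2} e^{-ta}\,\dd t \lesssim (1+a)^{-1}$ that you spell out.
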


\begin{prop}\label{prop:6}
Let $n\in\NN$ and $j\in\{0,1\}$. The integral operator with integral kernel $(u,v) \mapsto S_j^n(u,v)$ is bounded on $L^2(\RR)$. Moreover, if $(n,j)\neq (0,0)$, the same operator is also bounded on $L^2(w)$ for any weight $w\in A_2(\RR)$, with a bound only depending on $n$ and $[w]_{A_2}$.
\end{prop}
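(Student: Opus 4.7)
The strategy is to use the pointwise kernel bounds from Corollary \ref{cor:1} to split the operator with kernel $S_j^n(u,v)$ into pieces according to the four regions appearing there, and to bound each piece on $L^2(w)$ by a standard $A_2$-bounded operator, combining the results to obtain the proposition.

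The near-diagonal piece, where $|S_j^n(u,v)| \lesssim 1$ on $\{|u-v|\le c\}$, is pointwise dominated by (a constant multiple of) the Hardy--Littlewood maximal operator $M$, hence bounded on $L^2(w)$ for $w \in A_2(\RR)$ with constant depending on $[w]_{A_2}$. In the two off-diagonal regions where at least one of $u,v$ is positive, the kernel bounds are dominated by separable products $\phi(u)\psi(v)$ in which at least one of $\phi,\psi$ enjoys (super)exponential decay; the resulting rank-one-dominated operator has $L^2(w)$-norm bounded by $\|\phi\|_{L^2(w)}\|\psi\|_{L^2(w^{-1})}$, and this product is controlled in terms of $[w]_{A_2}$ by decomposing both $L^2$-norms dyadically as sums over the unit intervals $I_k=[k,k+1]$ and combining the $A_2$-estimate $w(I_k)\,w^{-1}(I_l) \le [w]_{A_2}(|k-l|+2)^2$ on the enclosing interval with the (super)exponential decay of the relevant summands.

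The fourth region $\{u,v<0,\ |u-v|>c\}$, which is the main obstacle for $(n,j)\neq(0,0)$, is handled by splitting the bound $(|u|+|v|+1)^{-1}+(e^{-|u|}+e^{-|v|})/(|u-v|+1)$ into two summands. For the Hilbert--Hardy-type summand $(|u|+|v|+1)^{-1}$, the reflection $u,v \mapsto -u,-v$ reduces matters to the kernel $(x+y+1)^{-1}$ on $(0,\infty)^2$, which is pointwise dominated by $x^{-1}\chi_{\{y<x\}}+y^{-1}\chi_{\{x<y\}}$; the associated operator is a sum of a Hardy operator and its dual, and since the Hardy operator is pointwise dominated by the Hardy--Littlewood maximal function on $\RR$ (after extending by zero to the negative half-line) and its dual admits an analogous treatment, the piece is $L^2(w)$-bounded with $[w]_{A_2}$-dependent constant. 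For the remaining summand, a Cauchy--Schwarz argument with auxiliary weight $w^{-1}$ gives $|Tf(u)|^2 \lesssim e^{-2|u|}\|f\|_{L^2(w)}^2 \cdot M(w^{-1})(u)$, using that $(1+|u-v|)^{-2}$ is even, non-increasing and integrable, so that its convolution against $w^{-1}$ is dominated pointwise by $M(w^{-1})$; the residual integral $\int_{-\infty}^0 e^{2u} w(u) M(w^{-1})(u)\,du$ is then bounded in terms of $[w]_{A_2}$ by a dyadic decomposition that uses the $A_2$-condition on unit intervals together with the exponential factor $e^{2u}$ to control the large-scale contributions.

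Finally, for the exceptional case $(n,j)=(0,0)$ only the unweighted $L^2(\RR)$-bound is asserted; in the fourth region only the weaker estimate $(1+|u-v|)^{-1}$ is available, which by itself would not yield $L^2$-boundedness by pure kernel considerations, but by construction $S_0^0 = \iker_{M_0(1)}$ and the $L^2(\RR)$-boundedness of $M_0(1)$ has already been established in Proposition \ref{prop:4}. Assembling the region-by-region estimates yields the proposition. The most delicate step will be the weighted bound on the Hilbert--Hardy-type summand in the fourth region, where one must carefully track the dependence on $[w]_{A_2}$ through the half-line restriction and the Hardy-type domination.
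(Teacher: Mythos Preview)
Your overall plan matches the paper's: bound $|S_j^n|$ by simple kernels via Corollary~\ref{cor:1}, then reduce to $A_2$-bounded model operators (maximal function, Hardy-type pieces), and for $(n,j)=(0,0)$ fall back on Proposition~\ref{prop:4}. The paper is more economical: it first observes that all four regional bounds in \eqref{eq:12} are dominated by the single combination
\[
\ind_{\{|u-v|\le 1\}}+\frac{1}{|u|+|v|+1}+\frac{e^{-|u|}+e^{-|v|}}{1+|u-v|}\ind_{\{|u-v|>1\}},
\]
handles the first summand via the maximal operator, and cites \cite[Lemma~5.7]{Ma23} for the other two. Your region-by-region treatment is essentially a reproof of that lemma, which is fine in principle, but one step breaks.

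The gap is in your handling of region~3. There the kernel is $\phi(u)\psi(v)$ with $\phi(u)=(1+|u|)^{-1}\ind_{u<0}$ and $\psi(v)=e^{-e^v/2}\ind_{v>0}$; only $\psi$ has (super)exponential decay. Your bound $w(I_k)\,w^{-1}(I_l)\le [w]_{A_2}(|k-l|+2)^2$ applied to the product of dyadic sums gives
\[
\|\phi\|_{L^2(w)}^2\,\|\psi\|_{L^2(w^{-1})}^2
\le [w]_{A_2}\sum_{k\ge 1}\sum_{l\ge 0}\frac{(k+l+2)^2}{(k+1)^2}\,e^{-e^l},
\]
and the inner sum over $l$ is $\gtrsim k^2\sum_l e^{-e^l}$, so the $k$-sum diverges. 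Thus the stated argument does not control this product. (A related under-justification occurs for the summand $e^{-|u|}/(1+|u-v|)$ in region~4: it is not clear that your residual integral $\int e^{2u}\,w\,M(w^{-1})$ admits the claimed bound via a ``dyadic decomposition on unit intervals''.) Both issues disappear once you notice the simple pointwise dominations
\[
\frac{e^{-e^{\max(u,v)}/2}}{1+|\min(u,v)|}\lesssim\frac{1}{1+|u|+|v|}
\quad\text{and}\quad
\frac{e^{-|u|}+e^{-|v|}}{1+|u-v|}\lesssim\frac{1}{1+|u|+|v|}\quad (u,v<0),
\]
which fold these pieces into the Hardy-type kernel you already treat; this is exactly the consolidation the paper performs before invoking \cite[Lemma~5.7]{Ma23}.
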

\begin{proof}
Firstly we consider the case $(n,j)\neq (0,0)$.	Observe that by applying Corollary \ref{cor:1} with $c=1$ we obtain
\begin{equation*}
	\big| S^n_j(u,v)\big|\lesssim_n \ind_{\{|u-v|\leq 1\} } + \frac{1}{|u|+|v|+1} + \frac{e^{-|u|} + e^{-|v|}}{|u-v|+1}\ind_{\{|u-v|>1\}}.
\end{equation*}
Thus, it is enough to prove that the integral operators associated with each of these three kernels in the right-hand side of the above estimate are bounded on $L^2(w)$, with a bound only depending on $[w]_{A_2}$. The last two kernels are covered in \cite[Lemma~5.7]{Ma23}. For the operator associated with $\ind_{\{|u-v|\leq 1\} }$ notice that this is just the convolution operator with $\ind_{[-1,1]}$, which is trivially bounded by the Hardy--Littlewood maximal operator:
\begin{equation*}
	\big|f\ast \ind_{[-1,1]}\big| \leq 2 \mathcal{M}(|f|), 
\end{equation*}
Since $\mathcal{M}$ is bounded on $L^2(w)$ for all $w\in A_2(\RR)$, with a bound only depending on $[w]_{A_2}$ (see \cite[Theorem~7.1.9]{Gra1}), we conclude that the operator associated with $\ind_{\{|u-v|\leq 1\} }$ has this property as well.
	
For $(n,j) = (0,0)$, it simply remains to observe that, by \eqref{eq:Snj_kerder}, the kernel $S_0^0(u,v)$ is the integral kernel of $M_0(1)$, which we already know from Proposition \ref{prop:4} to be bounded on $L^2(\RR)$.
\end{proof}

\begin{prop}\label{prop:5}
For $j=0,1$, the operator-valued function $\xi\mapsto M_j(\xi)$ is of class $C^\infty$. Moreover, for any $n\in\NN$ and $\xi\in\Rpos$, the operators $(\xi\partial_\xi)^n M_j(\xi)$ are integral operators, whose integral kernels are given by
\[
	\iker_{(\xi\partial_\xi)^n M_j(\xi)} (u,v) = S_j^n (u+\log\xi ,v+\log\xi).
\]
In particular, if $(n,j)\neq(0,0)$, then $(\xi\partial_\xi)^n M_j(\xi)$ is bounded on $L^2(w)$ for any $w\in A_2(\RR)$, with a bound depending only on $n$ and $[w]_{A_2}$.
\end{prop}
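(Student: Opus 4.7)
The strategy is to exploit the translation structure \eqref{eq:26}, namely $M_j(\xi) = T_{\log\xi} M_j(1) T_{-\log\xi}$, which makes the $\xi$-dependence transparent, and to verify the hypotheses of Remark \ref{rem:1} with the dense subclass $\mathcal{A} = C_c^\infty(\RR)$ by differentiating the kernel representation directly. For $f,g \in C_c^\infty(\RR)$, combining \eqref{eq:Mj_int} and \eqref{eq:32} yields
\begin{equation*}
\langle M_j(\xi) f, g\rangle = \int_0^{1/2} \int_\RR \int_\RR \iker_{M_j(1,t)}(u+\log\xi, v+\log\xi) \, f(v) \, \overline{g(u)} \, \dd u \, \dd v \, \dd t.
\end{equation*}
As $f,g$ have compact supports and $\xi$ ranges over a compact subinterval of $\Rpos$, the arguments $(u+\log\xi, v+\log\xi)$ remain confined to a compact region on which $|u-v|\leq c$ for some $c>0$. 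By the preceding proposition each $\iker_{M_j(1,t)}$ is smooth in the direction $\partial_u+\partial_v$, and iterating the chain rule gives $(\xi\partial_\xi)^n [\iker_{M_j(1,t)}(u+\log\xi, v+\log\xi)] = S_j^n(t, u+\log\xi, v+\log\xi)$. Lemma \ref{lm:3} provides the uniform bound $|S_j^n(t,u,v)| \lesssim_{n,c} 1$ for $t\in(0,1/2)$ and $|u-v|\leq c$, covering both \eqref{eq:est_S_nj} and \eqref{eq:est_S_00}, so dominated convergence legitimates differentiation under the triple integral sign, giving
\begin{equation}\label{eq:planner-der}
(\xi\partial_\xi)^n \langle M_j(\xi) f, g \rangle = \int_\RR \int_\RR S_j^n(u+\log\xi, v+\log\xi) \, f(v) \, \overline{g(u)} \, \dd u \, \dd v.
\end{equation}

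Denote by $T^{(n)}_\xi$ the integral operator with kernel $S_j^n(u+\log\xi, v+\log\xi)$; the right-hand side of \eqref{eq:planner-der} is $\langle T^{(n)}_\xi f, g\rangle$. A change of variables in the kernel identifies $T^{(n)}_\xi = T_{\log\xi} T^{(n)}_1 T_{-\log\xi}$, where $T^{(n)}_1$ has kernel $S_j^n(u,v)$ and is $L^2(\RR)$-bounded by Proposition \ref{prop:6}; since translations are strongly continuous isometries on $L^2(\RR)$, the map $\xi \mapsto T^{(n)}_\xi$ is weakly continuous and uniformly bounded. Converting between $(\xi\partial_\xi)^n$ and $\partial_\xi^n$ (a linear combination with smooth coefficients $\xi^{-k}$, coming from Stirling number identities, and preserving weak continuity) supplies the continuous operator-valued functions required by Remark \ref{rem:1}, and we conclude that $M_j$ is of class $C^\infty$ with $(\xi\partial_\xi)^n M_j(\xi) = T^{(n)}_\xi$.

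The final weighted $L^2$ bound for $(n,j) \neq (0,0)$ follows by combining Proposition \ref{prop:6}, which bounds $T^{(n)}_1$ on $L^2(w)$ uniformly in terms of $[w]_{A_2}$, with the translation invariance of the $A_2$-characteristic: $T_{\log\xi}$ is an isometry from $L^2(w(\cdot-\log\xi))$ onto $L^2(w)$ and $[w(\cdot-\log\xi)]_{A_2} = [w]_{A_2}$, so the conjugation formula $T^{(n)}_\xi = T_{\log\xi} T^{(n)}_1 T_{-\log\xi}$ transfers the bound uniformly in $\xi$. The most delicate technical point is the justification of differentiating under the triple integral sign, which rests entirely on the uniform-in-$t$ pointwise control for $S_j^n(t,u,v)$ on $\{|u-v|\leq c\}$ from Lemma \ref{lm:3}; all other steps are standard functional-analytic manipulations.
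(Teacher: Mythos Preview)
Your proof is correct and follows essentially the same route as the paper's own argument: both define the candidate operators $T^{(n)}_\xi$ via the translated kernels $S_j^n(u+\log\xi,v+\log\xi)$, invoke Proposition~\ref{prop:6} for $L^2$-boundedness, verify the hypotheses of Remark~\ref{rem:1} on a dense class of compactly supported test functions, and then transfer the weighted bound via the conjugation $T_{\log\xi}\,\cdot\,T_{-\log\xi}$ and the translation-invariance of $[w]_{A_2}$. The only cosmetic differences are that the paper appeals to Corollary~\ref{cor:1} (the $t$-integrated bounds) to justify differentiation under the integral, whereas you work directly with the triple integral and the uniform-in-$t$ bound from Lemma~\ref{lm:3} on $\{|u-v|\le c\}$; and you make explicit the Stirling-number conversion between $(\xi\partial_\xi)^n$ and $\partial_\xi^n$ that the paper leaves implicit.
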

\begin{proof}
For all $(n,j) \in \NN^2$ and $\xi \in \Rpos$, let $M_j^n(\xi)$ be the integral operator with integral kernel $(u,v) \mapsto S_j^n(u+\log\xi,v+\log\xi)$. Notice that, by construction,
\begin{equation}\label{eq:Mjn_transl}
M_j^n(\xi) = T_{\log \xi} M_j^n(1) T_{-\log \xi}.
\end{equation}
Thus, from Proposition \ref{prop:6}, arguing much as in the proof of Lemma \ref{lm:6}\ref{lm:6_i:F(H)cont}, we see that $M_j^n : \Rpos \to \LinBnd(L^2(\RR))$ is continuous and uniformly bounded.

Now, for all $f,g\in C_c(\RR)$, from \eqref{eq:Snj_kerder} we deduce that
\[
(\xi\partial_\xi)^n \langle M_j(\xi)f,g\rangle = \langle M_j^n(\xi)f,g\rangle,
\]
where the bounds from Corollary \ref{cor:1} are used to justify differentiating under the integral sign.
Thus, by Remark \ref{rem:1} we obtain that $M_j$ is of class $C^\infty$ and $(\xi\partial_\xi)^n M_j(\xi)=M_j^n(\xi)$ for all $n \in \NN$ and $j=0,1$.

Finally, for $(n,j)\neq (0,0)$, the desired weighted $L^2$-bounds for the operators $(\xi \partial_\xi)^n M_j(\xi) = M_j^n(\xi)$ follow from the corresponding bounds for the operators $M_j^n(1)$ proved in Proposition \ref{prop:6}, due to the relation \eqref{eq:Mjn_transl} and the fact that the $A_2$ characteristic is translation-invariant.
\end{proof}

We are left with proving the weighted $L^2$-bound for $M_0(\xi)$. Before we do that, we briefly recall some facts about classical Calder\'on--Zygmund operators on $\RR$ (see for instance \cite[Section~7.4]{Gra1}).

We say that a kernel $K : \RR^2 \to \CC$ is a \emph{standard kernel} if it is differentiable off the diagonal of $\RR^2$ and satisfies the estimates
\begin{equation}\label{eq:StanKer}
\big| K(u,v)\big| \lesssim |u-v|^{-1}\qquad \text{and}\qquad \big|\partial_u K(u,v)\big|+\big|\partial_v K(u,v)\big|\lesssim |u-v|^{-2}
\end{equation}
for $u\neq v$. 
An operator $T$ bounded on $L^2(\RR)$ shall be called a \emph{standard Calder\'on--Zygmund operator} if there exists a standard kernel $K$ such that, for any compactly supported $f\in L^2(\RR)$,
\[
Tf(x) = \int_\RR K(x,y) f(y)\, \dd y,\qquad x\notin \supp f.
\]
We recall a classical result concerning weighted bounds for Calder\'on--Zygmund operators; see, e.g., \cite[Theorem~7.11]{Duo}.

\begin{thm}\label{thm:CZ_oper}
Standard Calder\'on--Zygmund operators are bounded on $L^p(w)$ for all $w\in A_p(\RR)$, $p\in(1,\infty)$, with a bound only depending on $p$ and $[w]_{A_p}$. 
\end{thm}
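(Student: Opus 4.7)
The plan is to assemble the three classical ingredients of Calder\'on--Zygmund theory: a weak type $(1,1)$ bound via the Calder\'on--Zygmund decomposition, Marcinkiewicz interpolation combined with duality, and a sharp-maximal-function (Fefferman--Stein) comparison with the Hardy--Littlewood maximal operator $\mathcal{M}$ in order to upgrade to the weighted scale. Since the statement is a staple of textbook harmonic analysis, I would only outline the steps and refer to \cite[Theorem~7.11]{Duo} for full details.

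First, I would prove that $T$ is of weak type $(1,1)$. Given $f \in L^1(\RR)$ and $\lambda>0$, form the Calder\'on--Zygmund decomposition $f=g+\sum_Q b_Q$ at height $\lambda$: the good part $g$ satisfies $\|g\|_\infty \lesssim \lambda$ and $\|g\|_2^2 \lesssim \lambda \|f\|_1$, each $b_Q$ is supported on a dyadic interval $Q$ with $\int b_Q = 0$ and $\|b_Q\|_1 \lesssim \lambda |Q|$, and $\sum_Q |Q| \lesssim \lambda^{-1}\|f\|_1$. The $L^2$-boundedness of $T$ together with Chebyshev's inequality handles $Tg$; for $x \notin 2Q$, the cancellation of $b_Q$ combined with the smoothness estimate in \eqref{eq:StanKer} yields the pointwise bound $|Tb_Q(x)| \lesssim |Q|\, \|b_Q\|_1\, |x-c_Q|^{-2}$, whose integral over $(2Q)^c$ is $\lesssim \|b_Q\|_1$. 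Summing and controlling the measure of $\bigcup_Q 2Q$ gives the weak $(1,1)$ bound. Marcinkiewicz interpolation with the $L^2$-boundedness then produces unweighted $L^p$-boundedness for $p \in (1,2]$, and duality applied to $T^*$---whose kernel $K(v,u)$ satisfies the same size and smoothness estimates \eqref{eq:StanKer}---extends this to $p \in (1,\infty)$.

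For the weighted scale, I would invoke the Fefferman--Stein sharp-maximal approach. The key pointwise bound is $(Tf)^\#(x) \lesssim \mathcal{M}(|f|^r)(x)^{1/r}$ for any $r>1$: on an interval $I \ni x$, split $f = f\ind_{2I} + f\ind_{(2I)^c}$; the local part contributes to the $L^r$-oscillation of $Tf$ on $I$ via the unweighted $L^r$-boundedness of $T$, while on the far part $Tf$ is nearly constant on $I$ by the gradient estimate in \eqref{eq:StanKer}, with residual oscillation dominated by $\mathcal{M}f(x)$. Given $w \in A_p$, the reverse H\"older inequality furnishes some $r \in (1,p)$ with $w \in A_{p/r}$ (with quantitative control by $[w]_{A_p}$); then the Fefferman--Stein norm inequality $\|h\|_{L^p(w)} \lesssim \|h^\#\|_{L^p(w)}$, valid for $w \in A_\infty$, combined with the Muckenhoupt boundedness of $\mathcal{M}$ on $L^{p/r}(w)$, yields the desired estimate with a constant depending only on $p$ and $[w]_{A_p}$. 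The main obstacle is entirely bookkeeping: extracting the sharp dependence of each constant on $[w]_{A_p}$ through the reverse H\"older exponent and the weighted $\mathcal{M}$-bound; both steps are by now well understood quantitatively, so no new ideas are required beyond the classical scheme.
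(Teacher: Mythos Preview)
Your outline is correct and is precisely the classical argument from \cite[Theorem~7.11]{Duo}; the paper does not supply its own proof of this statement but simply cites that reference, so your sketch coincides with what the paper relies on.
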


In order to apply the above result to the operator $M_0(1)$, we need a further decomposition of its integral kernel.

\begin{prop}\label{prop:7}
There exist kernels $K_1, K_2$ such that 
\begin{equation}\label{eq:S00_dec}
	S_0^0(u,v)= K_1(u,v)+K_2(u,v),
\end{equation}
where $K_1$ is a standard kernel and $K_2$ satisfies an estimate analogous to \eqref{eq:12}, i.e., for any $c>0$,
\begin{equation}\label{eq:12_K2}
\begin{split}
	\big|K_2(u,v)\big|\lesssim_{c} 
	\begin{cases}
	1, & |u-v|\leq c,\\
	e^{-(u+v)/2} ,& |u-v|>c,\ u,v>0,\\
	\frac{e^{-e^{\max(u,v)}/2}}{\left|\min(u,v)\right|+1} ,& |u-v|>c,\ uv\leq 0,  \\
	\frac{1}{|u|+|v|+1}+ \frac{e^{-|u|} + e^{-|v|}}{|u-v|+1} ,& |u-v|>c,\ u,v<0.
	\end{cases}
\end{split}
\end{equation}
\end{prop}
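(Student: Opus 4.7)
The bound $|S_0^0(u,v)|\lesssim 1/(|u-v|+1)$ from Corollary~\ref{cor:1} in the region $u,v<0$, $|u-v|>c$ is strictly weaker than the target estimate $\frac{1}{|u|+|v|+1}+\frac{e^{-|u|}+e^{-|v|}}{|u-v|+1}$ allowed for $K_2$ in \eqref{eq:12_K2}; the discrepancy appears exactly when $u,v$ are very negative with moderate $|u-v|$, where $S_0^0$ exhibits genuine Calder\'on--Zygmund behaviour. The plan is to extract this behaviour into an explicit standard kernel $K_1$ and verify that the remainder $K_2:=S_0^0-K_1$ obeys \eqref{eq:12_K2}.

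\textbf{Step 1 (principal term).} For $u,v\le 0$ both $e^{\min(u,v)}$ and $e^{\max(u,v)}$ lie in $(0,1]$, so the small-argument asymptotics of $I_\nu,K_\nu$ from the Appendix apply to \eqref{eq:30}. The leading contribution to $\iker_{M_0(1,t)}(u,v)$ comes from $\sign(v-u)\,e^{\max(u,v)} I_t(e^{\min(u,v)}) K_{t+1}(e^{\max(u,v)})$, whose dominant term equals $\sign(v-u)\,(e^{\min(u,v)}/e^{\max(u,v)})^t=\sign(v-u)\,e^{-t|u-v|}$; the other summand in \eqref{eq:30} carries an extra factor $e^{2\min(u,v)}$ and is lower order. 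Integrating over $t\in(0,1/2)$ yields the model kernel
\[
P(u,v):=\frac{1-e^{-|v-u|/2}}{v-u}=f(v-u),\qquad f(w):=\frac{1-e^{-|w|/2}}{w},
\]
for which a direct computation gives $|f(w)|\lesssim 1/|w|$ and $|f'(w)|\lesssim 1/w^2$ for $w\neq 0$; in particular, $P$ is a standard (convolution-type) kernel on $\RR^2$.

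\textbf{Step 2 (cutoff and decomposition).} Setting $K_1=P$ cannot succeed, since in the region $u,v>0$ the bound \eqref{eq:12_K2} demands super-exponential decay of $K_2$ as $u+v\to+\infty$, while $P$ does not decay in that direction. We therefore set
\[
K_1(u,v):=P(u,v)\cdot\Psi(u,v),
\]
with $\Psi\in C^\infty(\RR^2)$ a cutoff equal to $1$ in a neighbourhood of $\{u,v\le 0\}$ and decaying super-exponentially as $\max(u,v)\to+\infty$. Then $K_2=S_0^0-K_1$ is estimated region by region: outside the negative region, $\Psi$ is very small, so $K_2\approx S_0^0$ and Corollary~\ref{cor:1} applies; inside the negative region, $P$ cancels the leading asymptotic of $S_0^0$, and the residual---higher-order corrections to the Bessel small-argument asymptotics together with the subdominant summand in \eqref{eq:30}---is controlled via the sharper Bessel estimates of the Appendix so as to match $\frac{1}{|u|+|v|+1}+\frac{e^{-|u|}+e^{-|v|}}{|u-v|+1}$.

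\textbf{Main obstacle.} Verifying that $K_1$ is a standard kernel in the strict sense of \eqref{eq:StanKer} is the crux. The size bound and the term $\partial_u P\cdot\Psi$ in $\partial_u K_1$ are routine, but the cross term $P\cdot\partial_u\Psi$ yields \emph{a priori} only $O(1/|v-u|)$, not the required $O(1/|v-u|^2)$; naive product cutoffs $\Psi(u,v)=\chi(u)\chi(v)$ fail here because $|\chi'(u)\chi(v)\,P(u,v)|\sim 1/|v-u|$ in the transition region. The delicate part of the proof is therefore the choice of $\Psi$ so that $|\partial_u\Psi(u,v)|$ itself carries additional decay in $|v-u|$, exploiting that, where the cutoff is active, the scales of $u,v$ and $|v-u|$ are comparable; this is what ultimately makes $K_1$ a genuine standard Calder\'on--Zygmund kernel.
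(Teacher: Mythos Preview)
Your identification of the principal part $P(u,v)=(1-e^{-|v-u|/2})/(v-u)$ and of the cutoff-derivative obstruction are both correct, but there is a genuine gap: the requirements you put on $\Psi$ are mutually inconsistent. If $\Psi\equiv 1$ on a neighbourhood of the full quadrant $\{u,v\le 0\}$ and also $|\partial_u\Psi|\lesssim|u-v|^{-1}$, then for fixed $U>0$ and $v\to-\infty$ one gets $|\Psi(U,v)-1|\le\int_0^U|\partial_u\Psi(u,v)|\,du\lesssim U/|v|\to 0$, forcing $\Psi(U,\cdot)\to 1$ and ruling out any decay as $U\to\infty$. The insight in your last sentence---that on the transition region of the cutoff one should have $|u|\simeq|v|\simeq|u-v|$---is exactly right, but it forces the cutoff to live in a \emph{cone} around the diagonal inside the third quadrant, not on a neighbourhood of the whole quadrant.

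The paper does precisely this, taking $\chi(u/v)\,\ind_{\RR_-^2}$ with $\chi\in C_c^\infty(\Rpos)$ equal to $1$ on $[1/2,2]$; then $|\partial_u\chi(u/v)|=|\chi'(u/v)|/|v|\simeq|u-v|^{-1}$ on the support of $\chi'$, which cures the cross term. The paper also departs from your plan in a second respect: rather than the explicit $P$, it sets $K_1:=S_0^{0,1}\cdot\chi(u/v)\,\ind_{\RR_-^2}$, where $S_0^{0,1}$ is the $I_t K_{t+1}$ half of $S_0^0$ from \eqref{eq:30}. This sidesteps the Bessel remainder bound $|S_0^{0,1}-P|\lesssim(e^{-|u|}+e^{-|v|})/(|u-v|+1)$ that your route would still owe on the cone; off the cone one has $|u-v|\simeq|u|+|v|$, so the crude estimate $|S_0^{0,1}|\lesssim(|u-v|+1)^{-1}$ already matches the target for $K_2$. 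The price is that one must bound $\partial_u S_0^{0,1}$ and $\partial_v S_0^{0,1}$ directly on the cone; there an extra factor of $t$ coming from the Bessel recurrences produces $\int_0^{1/2}t\,e^{-t|u-v|}\,dt\lesssim|u-v|^{-2}$, which is what yields the standard-kernel gradient bound.
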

\begin{proof}
We commence by observing that \eqref{eq:30} yields
\begin{equation}\label{eq:S0012}
\begin{split}
	S_0^0(u,v)
	 &= \sign(v-u) \int_0^{1/2} e^{\max(u,v)} I_{t}\big(e^{\min(u,v)}\big) K_{t+1}\big(e^{\max(u,v)}\big)\, \dd t\\
	  &\qquad+ \sign(v-u) \int_0^{1/2} e^{\min(u,v)}I_{t+1}\big(e^{\min(u,v)}\big) K_{t}\big(e^{\max(u,v)}\big)\,\dd t\\
	 &=: S_0^{0,1}(u,v)+S_0^{0,2}(u,v).
\end{split}
\end{equation}
	
Let $\chi\in C_c^\infty(\RR)$ be a smooth cutoff such that $0\leq \chi\leq 1$, $\chi$ is supported in $[1/4,4]$ and equal to $1$ in $[1/2,2]$; for convenience we shall also assume that $\chi(s) = \chi(1/s)$ for all $s > 0$. Then we obtain the decomposition \eqref{eq:S00_dec} if we set
\begin{align*}
	K_1(u,v) &:=S_0^{0,1}(u,v)\chi\Big(\frac{u}{v}\Big) \ind_{\RR^2_{-}}(u,v) ,\\
	K_2(u,v) &:=S_0^{0,1}(u,v)\Big(1-\chi\Big(\frac{u}{v}\Big)\Big) \ind_{\RR_{-}^2}(u,v) + S_0^{0,1}(u,v)\ind_{(\RR^2_{-})^c }(u,v)+S_0^{0,2}(u,v),
\end{align*}
where $\RR^2_-=\RR_{-}\times \RR_{-}$ and $\RR_{-} = (-\infty,0)$. We shall justify that $K_1$ and $K_2$ satisfy the required bounds. As before, by symmetry it is enough to consider the case $u < v$.
	
Firstly, notice that for $(u,v)\notin \RR^2_{-}$ we have $K_2(u,v)=S_0^0(u,v)$. Thus, Lemma \ref{lm:3} implies that $K_2$ satisfies \eqref{eq:12_K2} off $\RR^2_{-}$.

Moreover, if $u < v < 0$, then, by applying \eqref{eq:13} with $(k,\ell)=(1,0)$, 
\begin{equation*}
	\big| S_0^{0,2}(u,v)\big|\lesssim \frac{e^{2u-v}}{|u|+|v|+1} \lesssim \frac{1}{|u|+|v|+1},
\end{equation*}
while, by applying \eqref{eq:13} with $(k,\ell)=(0,1)$, 
\begin{equation}\label{eq:14}
	\big|S_0^{0,1}(u,v)\big| \lesssim \frac{1}{|u-v|+1}.
\end{equation}
We now notice that in the support of
\begin{equation*}
	(u,v)\mapsto (1-\chi(u/v))\ind_{\RR_{-}^2 }(u,v)
\end{equation*}
we have $|u-v|\simeq |u|+|v|$. Thus, the previous estimates show that $K_2$ satisfies \eqref{eq:12_K2} also in the region $\RR_-^2$.
	
It remains to justify that $K_1$ is a standard kernel. In view of \eqref{eq:14} we only need to verify the second inequality in \eqref{eq:StanKer}. Observe that $K_1$ is smooth off the diagonal.
	
Notice that for $u < v$ we have
\begin{equation}\label{eq:18}
\begin{split}
	\partial_u K_1(u,v) &=\chi\left( \frac{u}{v}\right)\ind_{\RR_{-}^2}(u,v) \partial_u S_0^{0,1}(u,v) +\frac{1}{v} \chi'\left( \frac{u}{v}\right)\ind_{\RR_{-}^2}(u,v) S_0^{0,1}(u,v),\\
	\partial_v K_1(u,v) &=\chi\left( \frac{u}{v}\right)\ind_{\RR_{-}^2}(u,v) \partial_v S_0^{0,1}(u,v) -\frac{u}{v^2} \chi'\left( \frac{u}{v}\right)\ind_{\RR_{-}^2}(u,v) S_0^{0,1}(u,v).
\end{split}
\end{equation}

We first estimate the partial derivatives of $S_0^{0,1}$.
By \eqref{eq:S0012} and \eqref{eq:Bessel_rec}, we have
\begin{align*}
	\partial_u S_0^{0,1}(u,v) &= \int_0^{1/2} e^{u+v} I_{t+1}(e^{u}) K_{t+1}(e^{v})\,\dd t + \int_0^{1/2} t e^v I_{t}(e^{u}) K_{t+1}(e^v)\, \dd t,\\
	\partial_v S_0^{0,1}(u,v) &= -\int_0^{1/2} e^{2v} I_{t}(e^{u}) K_{t}(e^{v})\,\dd t - \int_0^{1/2} t e^v I_{t}(e^{u}) K_{t+1}(e^v)\, \dd t.
\end{align*}
We emphasize that previously all polynomials in $t$ were disregarded and bounded by constants, whereas here the term $t$ is crucial. Indeed, by applying \eqref{eq:13} with $(k,\ell) = (1,1)$ and $(k,\ell) = (0,1)$, we obtain, for $u<v<0$,
\begin{equation}\label{eq:15}
	\big|\partial_u S_0^{0,1}(u,v)\big| \lesssim e^{2u} + \int_0^{1/2} t e^{-|u-v|t}\,\dd t\simeq e^{2u} + \frac{1}{1+|u-v|^2}.
\end{equation}
Similarly, by applying \eqref{eq:13} with $(k,\ell) = (0,0)$ and $(k,\ell) = (0,1)$, we get
\begin{equation}\label{eq:16}
	\big|\partial_v S_0^{0,1}(u,v)\big| \lesssim e^{v}  + \frac{1}{1+|u-v|^2}.
\end{equation}
Notice that $\chi(u/v)$ vanishes unless $u/v\in (1/4,4)$, and moreover 
\begin{equation*}
	e^u \leq e^{v} = e^{(u-v)/3} e^{(4v-u)/3 }\leq e^{-|u-v|/3},\qquad u<v<0,\ u/v \in (1/4,4).
\end{equation*}
	Consequently, the bounds in \eqref{eq:15} and \eqref{eq:16} simplify to
	\begin{equation}\label{eq:17}
	\big|\partial_u S_0^{0,1}(u,v)\big|,\big|\partial_v S_0^{0,1}(u,v)\big| \lesssim (1+|u-v|)^{-2}
	\end{equation} 
in the region where $u<v<0$ and $\chi(u/v) \neq 0$.
	
Now we consider the second terms in \eqref{eq:18}, where the derivatives fall on $\chi$. Notice that $\chi'(u/v)$ vanishes unless $2\leq u/v\leq 4$ or $1/4\leq u/v\leq 1/2$. Thus, $|u/v|\simeq 1$ and $|v|\simeq |u-v|$. Therefore,
\begin{equation}\label{eq:19}
	\left|\frac{1}{v} \chi'\left( \frac{u}{v}\right)\right|,\left|\frac{u}{v^2} \chi'\left( \frac{u}{v}\right)\right| \lesssim |u-v|^{-1}.
\end{equation}
	
By combining \eqref{eq:17}, \eqref{eq:14}, and \eqref{eq:19} we obtain
\begin{equation*}
	\big|\partial_u K_1(u,v)\big| + \big|\partial_v K_1(u,v)\big| \lesssim |u-v|^{-2},\qquad u\neq v,
\end{equation*}
as claimed.
\end{proof}

\begin{cor}\label{cor:2}
The operators $M_0(\xi)$ are bounded on $L^2(w)$ for any $w \in A_2(\RR)$ and $\xi \in \Rpos$, with a bound depending only on $[w]_{A_2}$.
\end{cor}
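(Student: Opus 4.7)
The approach is to combine the kernel decomposition of Proposition \ref{prop:7} with the translation-conjugation trick already exploited in the proof of Proposition \ref{prop:5}. It suffices to prove the bound at $\xi = 1$: indeed, exactly as in \eqref{eq:26} and \eqref{eq:Mjn_transl}, one has $M_0(\xi) = T_{\log\xi}\,M_0(1)\,T_{-\log\xi}$, and the $A_2$ characteristic is invariant under translations, so any $L^2(w)$-bound for $M_0(1)$ depending only on $[w]_{A_2}$ transfers to the same $L^2(w)$-bound for $M_0(\xi)$, uniformly in $\xi > 0$.

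By Proposition \ref{prop:5} (applied with $n=0$, $j=0$) and Proposition \ref{prop:7}, we may decompose $M_0(1) = U_1 + U_2$, where $U_k$ denotes the integral operator with kernel $K_k$. I would bound $U_1$ and $U_2$ on $L^2(w)$ separately, with constants depending only on $[w]_{A_2}$, and then conclude by addition.

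For $U_2$, I would observe that the estimate \eqref{eq:12_K2} satisfied by $K_2$ has precisely the same shape as the estimate \eqref{eq:12} satisfied by $S_j^n$ for $(n,j)\neq(0,0)$. Hence the proof of Proposition \ref{prop:6} applies verbatim: $|K_2(u,v)|$ is dominated by $\ind_{\{|u-v|\leq 1\}} + (|u|+|v|+1)^{-1} + (e^{-|u|}+e^{-|v|})(|u-v|+1)^{-1}\ind_{\{|u-v|>1\}}$, and each of these three kernels defines an operator bounded on $L^2(w)$ uniformly in $[w]_{A_2}$ (the first via the Hardy--Littlewood maximal function, the other two by \cite[Lemma~5.7]{Ma23}).

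For $U_1$, Proposition \ref{prop:7} tells us that $K_1$ is a standard kernel, so by Theorem \ref{thm:CZ_oper} it suffices to verify the $L^2(\RR)$-boundedness of $U_1$. This follows by difference from $U_1 = M_0(1) - U_2$: the operator $M_0(1)$ is bounded on $L^2(\RR)$ by Proposition \ref{prop:4}, while $U_2$ is bounded on $L^2(\RR)$ by the previous step (take $w\equiv 1$). Thus $U_1$ is a standard Calder\'on--Zygmund operator, and Theorem \ref{thm:CZ_oper} yields the desired weighted bound. I do not anticipate any serious obstacle, since all of the hard analytic work has already been carried out in Propositions \ref{prop:4}, \ref{prop:6}, \ref{prop:5}, and \ref{prop:7}; the corollary is essentially an assembly of these results together with the classical Muckenhoupt theory.
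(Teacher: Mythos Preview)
Your proposal is correct and follows essentially the same route as the paper's proof: reduce to $\xi=1$ via the translation conjugation \eqref{eq:Mjn_transl} and the translation-invariance of $[w]_{A_2}$, split $M_0(1)=U_1+U_2$ using Proposition \ref{prop:7}, bound $U_2$ exactly as in Proposition \ref{prop:6}, and then infer the $L^2$-boundedness of $U_1$ by difference so that Theorem \ref{thm:CZ_oper} applies. The paper's proof is identical in structure and in the cited inputs.
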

\begin{proof}
Again, due to \eqref{eq:Mjn_transl} and the translation-invariance of the $A_2$ characteristic, it is enough to consider $\xi=1$ and prove the weighted bounds for $M_0(1)$.

Let $K_1$ and $K_2$ be the kernels given by Proposition \ref{prop:7}. 
Let $T_1$ and $T_2$ the integral operators with kernels $K_1$ and $K_2$ respectively.
As $S_0^0 = \iker_{M_0(1)}$, we deduce that $M_0(1) = T_1 + T_2$.

Now, the kernel $K_2$ satisfies the pointwise estimate \eqref{eq:12_K2}, which is analogous to the estimate \eqref{eq:12} satisfied by the kernels $S_j^n$ for $(n,j) \neq (0,0)$. Thus, by the same argument as in the proof of Proposition \ref{prop:6}, we deduce the required weighted bounds for $T_2$.

To prove the analogous bounds for $T_1$, we preliminarily observe that $T_1$ is $L^2(\RR)$-bounded, as both $M_0(1)$ and $T_2$ are. Moreover, by Proposition \ref{prop:7}, its integral kernel $K_1$ is a standard kernel. Thus, Theorem \ref{thm:CZ_oper} can be applied to $T_1$ and yields the required weighted bounds.
\end{proof}

\begin{rem}
The proofs of Proposition \ref{prop:7} and Corollary \ref{cor:2} show that one may weaken the assumption on the gradient of the off-diagonal kernel $K$ in Theorem \ref{thm:CZ_oper}: namely, it is enough to assume the second inequality in \eqref{eq:StanKer} only in a conic neighbourhood of the diagonal, i.e., for $C^{-1} \leq u/v \leq C$ for some constant $C>1$. Indeed, much as in the proof of Proposition \ref{prop:7}, one can split $K = K_1+K_2$, where $K_1(u,v) = K(u,v) \chi(u/v)$ is the ``near-diagonal'' part of $K$. The size condition \eqref{eq:StanKer} on $K$ implies that $|K_2(u,v)| \lesssim (|u|+|v|)^{-1}$, so the integral operator $T_2$ corresponding to $K_2$ has all the desired boundedness properties (see \cite[Lemma 5.7]{Ma23}), and in particular it is $L^2$-bounded. On the other hand, the kernel $K_1$ is a standard kernel, and moreover the corresponding singular integral operator $T-T_2$ is $L^2$-bounded, so Theorem \ref{thm:CZ_oper} yields the desired bounds for $T-T_2$.
\end{rem}

The weighted $L^2$-bounds from Proposition \ref{prop:5} and Corollary \ref{cor:2} finally allow us to apply the operator-valued multiplier theorem and complete the proof of our main result.

\begin{proof}[Proof of Theorem \ref{thm:main}]
Recall that $\sqrt{\Lapl}$ has a bounded $\hormander{s}$-calculus on $L^p(N)$ for any $p \in (1,\infty)$ and $s>Q/2$ \cite{Ch,MaMe}. Moreover, Proposition \ref{prop:5} and Corollary \ref{cor:2} prove the validity of the estimate \eqref{eq:23}.
Thus, we apply Theorem \ref{thm:V-VSMT} with $L = \sqrt{\Lapl}$ and $M = M_j$ and in view of Proposition \ref{prop:4} we obtain the $L^p(G)$-boundedness of $\widetilde{\Riesz}_j^\infty$ for $j=0,1$ and all $p\in(1,\infty)$. Hence, Proposition \ref{prop:8} gives the $L^p(G)$-boundedness of the $\Riesz_j^\infty$ for $j=0,1,\ldots,d$ and $p \in (1,\infty)$. Together with Proposition \ref{prop:localparts} and \eqref{eq:Riesz_dec}, this proves that the Riesz transforms $\Riesz_j$ are bounded on $L^p(G)$ for all $p\in(1,\infty)$ and $j=0,\dots,d$.
\end{proof}

\section{Appendix: Bessel functions}\label{S:Appendix}

In this section we recall some standard facts about the Bessel functions, and prove the estimates that are needed in the rest of the paper. In what follows we consider only real arguments and orders.

\subsection{Definitions and basic properties}

The Bessel function of the first kind $J_t$ of order $t\in\RR$ is defined by 
\begin{equation*}
J_t(x)= (x/2)^t \sum_{k=0} ^\infty\frac{(-1)^k  (x/2)^{2k}}{k! \, \Gamma(t+k+1)}, \qquad x>0.
\end{equation*}
Whenever $t=-n$ for some $n\in\NN$, we have $J_{-n}=(-1)^n J_n$ (see for instance \cite[(10.4.1)]{DLMF}).

Below we gather some properties of the Bessel functions of the first kind. Firstly, for $t \in \RR$ the function $J_t$ is smooth on $(0,\infty)$. Moreover, Bessel functions of nonnegative order are bounded (see \cite[(10.14.1)]{DLMF}):
\begin{equation}\label{eq:1}
\big| J_t(x)\big|\leq 1,\qquad x\in\RR,\ t \geq 0.
\end{equation}
Furthermore, we have a recurrence formula for derivatives (see \cite[(10.6.2)]{DLMF}):
\begin{equation}\label{eq:2}
J'_t(x)=J_{t-1}(x)-\frac{t}{x}J_{t}(x) =-J_{t+1}(x)+\frac{t}{x}J_{t+1}(x).
\end{equation}
We shall also make use of the following pointwise bounds:
\begin{equation}\label{eq:4}
\big| J_t(x)\big| \leq Cx^{-1/3},\qquad x>0,\ t>0.
\end{equation}
(see \cite{Lan}), and 
\begin{equation}\label{eq:7}
|J_t(x)|\leq \frac{(x/2)^t}{\Gamma(t+1)},\qquad x>0,\ t>-1/2. 
\end{equation}
(see \cite[(10.14.4)]{DLMF}).

The modified Bessel function of the first kind $I_t$ of order $t \in \RR$, also known as Bessel function of imaginary argument, is defined by
\begin{equation*}
I_t(x)= (x/2)^t \sum_{k=0} ^\infty\frac{(x/2)^{2k}}{k! \, \Gamma(t+k+1)},\qquad x>0.
\end{equation*}
It is easily observed that $I_{-n}=I_n$, $n\in\NN$.

Alternatively, one may use the integral representation for $t>-1/2$ (see \cite[(10.32.2)]{DLMF}):
\begin{equation}\label{eq:11}
I_t(x)=\frac{x^t}{\sqrt{\pi} 2^t \Gamma(t+1/2)} \int_{-1}^1 e^{-xs} (1-s^2)^{t-1/2}\, \dd s.
\end{equation}
For a fixed $t>0$ the function $x\mapsto I_t(x)$ is smooth, positive, and increasing on $(0,
\infty)$, whereas for a fixed $x>0$ the function $t\mapsto I_t(x)$ is decreasing on $(0,\infty)$.

The modified Bessel function of the second kind $K_t$ of order $t \in \RR$ is defined in terms of $I_t$ by
\begin{equation*}
K_t(x) =\frac{\pi}{2} \frac{I_{-t}(x) -I_t(x)}{\sin t\pi},\qquad x>0,
\end{equation*}
where for integer values of $t$ one takes the limit. This expression immediately gives $K_{-t}=K_t$, $t\in\RR$. There is also an integral representation for $K_t$, $t>-1/2$ (see \cite[(10.32.8)]{DLMF}):
\begin{equation}\label{eq:10}
K_t(y) = \frac{\sqrt{\pi}y^t}{2^t \Gamma(t+1/2)} \int_1^\infty e^{-ys} (s^2-1)^{t-1/2}\, \dd s.
\end{equation}
The function $K_t$ is positive and smooth on $(0,\infty)$. Moreover, $x\mapsto K_t(x)$ for a fixed $t$ is decreasing on $(0,\infty)$, and $t\mapsto K_t(x)$ for a fixed $x>0$ is increasing on $(0,\infty)$.

The modified Bessel functions satisfy the following recurrence formulas:
\begin{align}
\begin{split}\label{eq:Bessel_rec}
x\partial_x I_t(x) &= x I_{t-1}(x)- tI_t(x) = xI_{t+1}(x) + tI_t(x),\\
y\partial_y K_t(y) &= -y K_{t-1}(y)- tK_t(y) = -yK_{t+1}(y) + tK_t(y)
\end{split}
\end{align}
(see \cite[(10.27.2)]{DLMF}).

\begin{lm}\label{lm:5}
Let $T>1$ and $x\in(0,1)$. The following estimates hold:
\begin{align*}
	I_t(x)&\simeq_T x^t,\qquad t\in(0,T),\\
	K_t(x)&\simeq_{T} x^{-t},\qquad t\in(T^{-1},T),\\
	K_t(x)&\lesssim x^{t-1},\qquad t\in(0,1/2).
\end{align*}
\end{lm}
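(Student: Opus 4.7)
The plan is to handle the three estimates separately. For $I_t(x)\simeq_T x^t$ I would work directly from the series definition: the first term $1/\Gamma(t+1)$ is a lower bound for $\sum_{k\geq 0}(x/2)^{2k}/(k!\,\Gamma(t+k+1))$ since all terms are positive, and the inequality $\Gamma(t+k+1)\geq k!\,\Gamma(t+1)$ (valid for $t\geq 0$) together with $(k!)^2\geq k!$ yields the upper bound $\sum_k (x/2)^{2k}/(k!\,\Gamma(t+k+1))\leq e^{x^2/4}/\Gamma(t+1)$. Since $\Gamma(t+1)$ is bounded above and below on $(0,T]$ and $e^{x^2/4}\leq e^{1/4}$ for $x\in(0,1)$, this gives the equivalence with constants depending only on $T$.

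For $K_t(x)\simeq_T x^{-t}$ I would apply the substitution $u=xs$ in the integral representation \eqref{eq:10}, which rewrites
\[
K_t(x) = \frac{\sqrt{\pi}\,x^{-t}}{2^t\,\Gamma(t+1/2)}\int_x^\infty e^{-u}(u^2-x^2)^{t-1/2}\,du.
\]
For $t\in(T^{-1},T)$ the prefactor is $\simeq_T 1$ since $\Gamma$ is bounded above and below on $[T^{-1}+1/2,\,T+1/2]$. To bound the integral above I would split at $t=1/2$: for $t\geq 1/2$ use $(u^2-x^2)^{t-1/2}\leq u^{2t-1}$ so that $\int_0^\infty e^{-u}u^{2t-1}\,du=\Gamma(2t)<\infty$; for $t<1/2$ substitute $v=u-x$ and bound $(v+x)^{t-1/2}\leq v^{t-1/2}$ (using $t-1/2<0$ and $v+x\geq v$), again reducing the integral to $\Gamma(2t)$, which is bounded for $t>T^{-1}$. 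For the lower bound I would restrict the integration to $u\in[2,3]$: there $u^2-x^2\in[3,9]$ for $x\in(0,1)$, so the integrand is bounded below by a positive constant depending only on $T$.

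The third estimate is the subtle one. As $t\to 0^+$, the classical small-$x$ asymptotic $K_t(x)\sim \tfrac{1}{2}\,\Gamma(t)\,(x/2)^{-t}$ carries a constant that blows up like $1/t$, so one cannot just let $T^{-1}\to 0^+$ in the previous argument. The remedy is to exchange the natural exponent $-t$ for the weaker exponent $t-1$, which admits a constant uniform in $t\in(0,1/2)$. For such $t$ the prefactor $\sqrt{\pi}/(2^t\,\Gamma(t+1/2))$ in \eqref{eq:10} is bounded since $\Gamma$ is bounded below on $[1/2,1]$, so it suffices to show $\int_1^\infty e^{-xs}(s^2-1)^{t-1/2}\,ds\lesssim 1/x$. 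Substituting $s=1+u$ and splitting at $u=1$: on $(0,1)$ the factor $u+2$ is bounded and $\int_0^1 u^{t-1/2}\,du=1/(t+1/2)\leq 2$, while on $(1,\infty)$ we have $(u(u+2))^{t-1/2}\leq u^{2t-1}\leq 1$ (using $t<1/2$), so $\int_1^\infty e^{-xu}\,du\leq 1/x$. Multiplying by the $x^t$ prefactor yields the estimate, and spotting this trade-off of exponents is the main obstacle.
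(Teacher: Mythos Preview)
Your proof is correct. The one slip is that after substituting $v=u-x$ you have $u^2-x^2=v(v+2x)$, not $v(v+x)$; the inequality $(v+2x)^{t-1/2}\le v^{t-1/2}$ still holds for the same reason, so nothing breaks.

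The paper proceeds slightly differently. For $I_t$ it uses the integral representation \eqref{eq:11} rather than the series: on $(-1,1)$ the factor $e^{-xs}$ is $\simeq 1$ for $x\in(0,1)$, and $\int_{-1}^1(1-s^2)^{t-1/2}\,ds$ is a Beta integral bounded above and below for $t\in(0,T)$. Your series argument is just as short and perhaps more elementary. For $K_t$ the paper works directly with \eqref{eq:10} (no preliminary change of variables), splitting the $s$-integral at $s=2$: on $(1,2)$ one has $e^{-xs}\simeq 1$ and the integral of $(s^2-1)^{t-1/2}$ is finite, contributing $\simeq_T x^t$; on $(2,\infty)$ one has $(s^2-1)^{t-1/2}\simeq s^{2t-1}$, and the contribution is handled by the substitution $s\mapsto xs$ for $t\in(T^{-1},T)$ and by the trivial bound $s^{2t-1}\le 1$ for $t\in(0,1/2)$. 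Your approach (change variables first to $u=xs$, then split) amounts to the same estimate organised differently; both routes are equally direct.
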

\begin{proof}
The estimate for $I_t$ follows immediately from \eqref{eq:11}. For $K_t$, by splitting the integration interval in \eqref{eq:10} onto $(1,2)$ and $(2,\infty)$ we obtain for $t\in(0,T)$ that
\begin{equation*}
	K_t(x) \simeq_T x^t  + x^t \int_2^\infty e^{-xs} s^{2t-1}\,\dd s.
\end{equation*}
If $T=1/2$, then, by bounding $s^{2t-1}\leq 1$ in the integral above, we obtain the desired estimate for $t \in (0,1/2)$. On the other hand, if $t\in(T^{-1},T)$ with $T>1$, then
\begin{equation*}
	x^t\int_2^\infty e^{-xs} s^{2t-1}\,\dd s =   x^{-t} \int_{2x}^\infty e^{-s} s^{2t-1}\,\dd s \simeq_T x^{-t},
\end{equation*}
as required.
\end{proof}

\subsection{Products of modified Bessel functions}

We record here a few formulas for homogeneous derivatives of modified Bessel functions that are repeatedly used in the paper; their proofs are easy applications of the recurrence formulas \eqref{eq:Bessel_rec}.

\begin{lm}
For all $N \in \NN$, $t \in \RR$ and $x,y > 0$,
\begin{equation}\label{eq:higher_der_I_K}
\begin{aligned}
(x\partial_x)^N I_t(x) &= \sum_{k=0}^N C_{N,k}(t) x^{k} I_{t+k}(x), \\
(y\partial_y)^N K_t(y) &= \sum_{k=0}^N C'_{N,k}(t) y^{k} K_{t+k}(y)
\end{aligned}
\end{equation}
for certain polynomials $C_{N,k}$ and $C_{N,k}'$. In addition,
\begin{equation}\label{eq:1st_der_IK}
(x \partial_x - y \partial_y) I_t(x) K_t(y) = x I_{t+1}(x) K_t(y) + y I_t(x) K_{t+1}(y)
\end{equation}
and
\begin{equation}\label{eq:2nd_der_IK}
(x \partial_x + y \partial_y) (x \partial_x - y \partial_y) I_t(x) K_t(y) = (x^2-y^2) I_t(x) K_t(y).
\end{equation}
\end{lm}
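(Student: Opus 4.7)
The three identities are separate, so I would handle them in sequence, all by direct manipulation using only the recurrence formulas in \eqref{eq:Bessel_rec}.

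For \eqref{eq:higher_der_I_K}, my plan is induction on $N$. The base case $N=0$ is trivial with $C_{0,0}(t) = 1$. For the inductive step, I would compute
\[
x\partial_x \bigl[x^k I_{t+k}(x)\bigr] = k\, x^k I_{t+k}(x) + x^k \bigl[x\partial_x I_{t+k}(x)\bigr],
\]
and then use the second form in \eqref{eq:Bessel_rec}, namely $x\partial_x I_{t+k}(x) = x I_{t+k+1}(x) + (t+k) I_{t+k}(x)$, to see that the right-hand side is $(2k+t)\,x^k I_{t+k}(x) + x^{k+1} I_{t+k+1}(x)$. Summing, one gets a recurrence $C_{N+1,k}(t) = (2k+t)\,C_{N,k}(t) + C_{N,k-1}(t)$ (with appropriate boundary conventions), so the coefficients remain polynomial in $t$. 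The argument for $K_t$ is identical after substituting the second $K$-recurrence in \eqref{eq:Bessel_rec}.

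For \eqref{eq:1st_der_IK}, I would simply expand
\[
(x\partial_x - y\partial_y)\bigl[I_t(x) K_t(y)\bigr] = \bigl[x\partial_x I_t(x)\bigr] K_t(y) - I_t(x)\bigl[y\partial_y K_t(y)\bigr]
\]
and substitute $x\partial_x I_t(x) = x I_{t+1}(x) + t I_t(x)$ and $y\partial_y K_t(y) = -y K_{t+1}(y) + t K_t(y)$ from \eqref{eq:Bessel_rec}. The two $t\,I_t(x) K_t(y)$ contributions cancel and the claimed expression drops out.

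For \eqref{eq:2nd_der_IK}, the slickest route is to first derive the modified Bessel equation in homogeneous form, namely $(x\partial_x)^2 I_t(x) = (x^2+t^2) I_t(x)$ and $(y\partial_y)^2 K_t(y) = (y^2+t^2) K_t(y)$. This is a short computation from \eqref{eq:Bessel_rec}: applying $x\partial_x$ to $x I_{t+1}(x) + t I_t(x)$ and then using the \emph{other} form of the recurrence $x\partial_x I_{t+1}(x) = x I_t(x) - (t+1) I_{t+1}(x)$ produces $(x^2+t^2) I_t(x)$, with the cross terms in $x I_{t+1}(x)$ cancelling; the $K_t$ case is the same up to signs. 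Since $x\partial_x$ and $y\partial_y$ act on different variables and thus commute, $(x\partial_x+y\partial_y)(x\partial_x-y\partial_y) = (x\partial_x)^2 - (y\partial_y)^2$, and applying this operator to $I_t(x) K_t(y)$ gives $(x^2+t^2 - y^2 - t^2) I_t(x) K_t(y) = (x^2-y^2) I_t(x) K_t(y)$, as required.

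There is no real obstacle here: every step is a one-line algebraic rearrangement, which is why the statement is phrased as a lemma with the comment that the proofs are easy applications of \eqref{eq:Bessel_rec}. The only small decision is whether to derive \eqref{eq:2nd_der_IK} directly (by applying $x\partial_x + y\partial_y$ to the RHS of \eqref{eq:1st_der_IK}, which requires tracking several cross terms) or via the modified Bessel equation; the latter is cleaner and makes the cancellation of $t$-dependent terms transparent, so I would present it that way.
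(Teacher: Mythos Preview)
Your proposal is correct and follows exactly the approach indicated in the paper, which simply states that these identities are easy applications of the recurrence formulas \eqref{eq:Bessel_rec} and gives no further details. Your write-up supplies the routine computations the paper omits; in particular, your derivation of \eqref{eq:2nd_der_IK} via the homogeneous form of the modified Bessel equation is a clean way to organise the cancellations.
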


We now discuss a couple of estimates for tensor products $I_t(x) K_t(y)$ of modified Bessel functions, as well as certain homogeneous derivatives thereof.

\begin{lm}\label{lm:1}
Let $N\in\NN$. There holds 
\begin{equation}\label{eq:5}
\Big|\big(x\partial_x+y\partial_y\big)^N I_t(x) K_t(y)\Big|\lesssim_N \frac{(t+1) (1+y-x)^{N+1} e^{-(y-x)}}{\sqrt{xy}}
\end{equation}
and 
\begin{equation}\label{eq:6}
\Big|\big(x\partial_x+y\partial_y\big)^N  (x\partial_x - y\partial_y)I_t(x) K_t(y)\Big|\lesssim_N \frac{(t+1)(x+y) (1+y-x)^{N+1} e^{-(y-x)}}{\sqrt{xy}},
\end{equation}
uniformly in $0<x<y<\infty$ and $t>0$.  
\end{lm}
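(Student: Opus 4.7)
The proof rests on the Poisson-type integral representation
\begin{equation*}
I_t(x) K_t(y) = \frac{(xy)^t}{4^t\,[\Gamma(t+1/2)]^2}\int_0^\pi\int_0^\infty e^{x\cos\theta - y\cosh\sigma}(\sin\theta\sinh\sigma)^{2t}\,\dd\sigma\,\dd\theta,
\end{equation*}
valid for $t > -1/2$ and $x, y > 0$, which is obtained by multiplying the standard integral formulas for $I_t$ (with $s=-\cos\theta$) and for $K_t$ (with $s=\cosh\sigma$). Since $x\partial_x + y\partial_y$ acts on the factor $(xy)^t\,e^{x\cos\theta - y\cosh\sigma}$ as multiplication by $2t + x\cos\theta - y\cosh\sigma$, iterating $N$ times yields
\begin{equation*}
(x\partial_x + y\partial_y)^N I_t K_t = \frac{(xy)^t}{4^t\,[\Gamma(t+1/2)]^2}\int_0^\pi\int_0^\infty (2t + x\cos\theta - y\cosh\sigma)^N e^{x\cos\theta - y\cosh\sigma}(\sin\theta\sinh\sigma)^{2t}\,\dd\sigma\,\dd\theta.
\end{equation*}

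Using $x\cos\theta = x - 2x\sin^2(\theta/2)$ and $y\cosh\sigma = y + 2y\sinh^2(\sigma/2)$ to extract the factor $e^{-(y-x)}$, and changing variables via $p = \sqrt{2x}\sin(\theta/2)$, $q = \sqrt{2y}\sinh(\sigma/2)$, the estimate \eqref{eq:5} reduces (after accounting for the Jacobian and simplifying the prefactor) to
\begin{equation*}
\frac{1}{[\Gamma(t+1/2)]^2}\int_0^{\sqrt{2x}}\!\int_0^\infty |2t-(y-x)-p^2-q^2|^N\, p^{2t}q^{2t}\bigl(1-\tfrac{p^2}{2x}\bigr)^{t-1/2}\!\bigl(1+\tfrac{q^2}{2y}\bigr)^{t-1/2} e^{-p^2-q^2}\dd p\,\dd q \lesssim_N (t+1)(1+y-x)^{N+1}.
\end{equation*}
This is proved by expanding the factor $(2t-(y-x)-p^2-q^2)^N$ via the multinomial theorem and evaluating each resulting Gaussian-type integral. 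The key observation is that factors of $2t$ arising in the expansion are paired, through integration by parts or the identity $\Gamma(t+c+1/2)/\Gamma(t+c+3/2) = 1/(t+c+1/2)$, with shifted moments of $p^{2t}q^{2t}e^{-p^2-q^2}$, so that each occurrence of $2t$ is ``absorbed'' into an $O(1)$ contribution rather than an $O(t)$ one; as a result only a single factor of $(t+1)$ survives in the final bound.

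For \eqref{eq:6}, first apply \eqref{eq:1st_der_IK} to write
\begin{equation*}
(x\partial_x - y\partial_y) I_t(x) K_t(y) = xI_{t+1}(x) K_t(y) + yI_t(x) K_{t+1}(y),
\end{equation*}
then apply $(x\partial_x + y\partial_y)^N$ to each summand and repeat the same integral-representation analysis with $t$ replaced by $t+1$ in the appropriate factor. The extra factor $x + y$ in the target bound comes from the $x$ and $y$ prefactors in the decomposition above.

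\textbf{Main obstacle.} The delicate step is the cancellation analysis in the multinomial expansion: a naive term-by-term bound produces a spurious $(t+1)^N$, and the improvement to the single factor $(t+1)$ requires carefully tracking how each $2t$ is absorbed through the Gamma-function identities, in the presence of the additional $t$-dependent weights $(1-p^2/(2x))^{t-1/2}$ and $(1+q^2/(2y))^{t-1/2}$ coming from the change of variables.
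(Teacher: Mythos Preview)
Your integral representation and the reduction to the displayed double integral are correct, and the approach is genuinely different from the paper's. The paper does not use the product of Poisson integrals for $I_t$ and $K_t$; instead it relies on the single-integral formula
\[
I_t(x)K_t(y)=\int_0^\infty J_{2t}\bigl(2\sqrt{xy}\sinh u\bigr)\,e^{-(y-x)\cosh u}\,du,
\]
writes this as $F_t(2\sqrt{xy},y-x)$, and observes that $x\partial_x+y\partial_y=a\partial_a+b\partial_b$ and $x\partial_x-y\partial_y=-\tfrac{x+y}{y-x}\,b\partial_b$ in the new variables $a=2\sqrt{xy}$, $b=y-x$. The $(t+1)$ factor then arises in a completely transparent way: on the range $s>1$ one writes $J_{2t}(s)=(2t+1)J_{2t+1}(s)/s+J_{2t+1}'(s)$, uses Landau's uniform bound $|J_\nu(s)|\lesssim s^{-1/3}$ on the first piece, and integrates the second piece by parts. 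No cancellation analysis is needed.

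There is a real gap in your proposal precisely at the ``delicate step.'' Your mechanism for absorbing each factor of $2t$ via Gamma identities does not survive the passage from the idealised Gamma moments to the actual integral. Concretely, if one drops the weights $(1-\tfrac{p^2}{2x})^{t-1/2}$ and $(1+\tfrac{q^2}{2y})^{t-1/2}$ (which you describe only as an additional complication), the signed integral equals $\tfrac12\,E\bigl[(2t-b-S)^N\bigr]$ with $S\sim\mathrm{Gamma}(2t+1)$; for $N=4$ and $b=0$ this is $\tfrac12\bigl(3(2t+1)^2+20(2t+1)+1\bigr)\sim t^2$, not $O(t)$. So the weights are not a nuisance to be controlled but the very source of the cancellation that brings $(t+1)^{\lfloor N/2\rfloor}$ down to $(t+1)$, and your sketch gives no indication of how to exploit them. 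If you do want to pursue this route, note that in the only place the paper uses the lemma the order is restricted to $t\in(0,1/2)$; for such $t$ the weights satisfy $(1+\tfrac{q^2}{2y})^{t-1/2}\le 1$ and $(1-\tfrac{p^2}{2x})^{t-1/2}\le(1-\tfrac{p^2}{2x})^{-1/2}$, and a direct (unsigned) estimate of your double integral already yields a bound $\lesssim_N(1+b)^{N}$, which suffices for the application though not for the lemma as stated.
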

\begin{proof}
Our proof relies on the following formula (see \cite[\S 7.14.2, eq.\ (78)]{EMOT2}):
\begin{equation*}
	I_t(x)K_t(y) = \int_0^\infty J_{2t}\big(2\sqrt{xy}\sinh u \big) e^{-(y-x)\cosh u}\,\dd u,\qquad 0<x<y.
\end{equation*}
After appropriate substitutions we get
\begin{equation*}
	I_t(x)K_t(y)= \int_0^\infty J_{2t}\big(2\sqrt{xy}s\big) e^{-(y-x)\sqrt{1+s^2}}\frac{\dd s}{\sqrt{1+s^2}}= F_t(2\sqrt{xy},y-x),
\end{equation*}
where
\begin{equation}\label{eq:Ftab}
	F_t(a,b) \defeq \frac{1}{a}\int_0^\infty J_{2t}(s) e^{-b\sqrt{1+(s/a)^2}} \frac{\dd s}{\sqrt{1+(s/a)^2}}.
\end{equation}	
	
Notice now that, under the change of variables
\[
a=2\sqrt{xy}, \qquad b=y-x,
\]
we have
\begin{equation*}
x\partial_x+y\partial_y = a\partial_a +b\partial_b, \qquad	x\partial_x-y\partial_y = -\frac{(x+y)}{y-x}b\partial_b.
\end{equation*}
Thus, the estimates \eqref{eq:5} and \eqref{eq:6} reduce to proving that
\begin{equation}\label{eq:Ftab_red}
|(a\partial_a)^{M} (b\partial_b)^{K+\ell} F_t(a,b)| \lesssim_{M,K} \frac{(t+1) b^\ell (1+b)^{M+K+1} e^{-b}}{a},\\
\end{equation}
for $\ell=0,1$, all $M,K \in \NN$ and all $a,b,t>0$.

Now, from \eqref{eq:Ftab} we readily deduce by induction that
\begin{multline*}
	(a\partial_a)^M F_t(a,b)\\
	= \frac{1}{a}\sum_{\substack{n,m\in\NN\\ n+m\leq M}} C_{M,n,m}  \int_0^\infty J_{2t}(s) b^n e^{-b\sqrt{1+(s/a)^2}} \frac{(s/a)^{2n+2m} }{\big(1+(s/a)^2\big)^{m+n/2}} \frac{\dd s}{\sqrt{1+(s/a)^2}}.
\end{multline*}
and
\begin{multline*}
(b\partial_b)^K(a\partial_a)^M F_t(a,b)= 
\frac{1}{a} \sum_{\substack{n,m\in\NN\\ n+m\leq M}} \sum_{\substack{k \in \NN \\ \min(1,K) \leq k\leq K}} C_{M,n,m,K,k} \int_0^\infty J_{2t}(s) \\ 
\times b^{n+k} e^{-b\sqrt{1+(s/a)^2}}
	 \frac{(s/a)^{2n+2m} }{\big(1+(s/a)^2\big)^{m+(n-k)/2}}\frac{\dd s}{\sqrt{1+(s/a)^2}}
\end{multline*}
for suitable real constants $C_{M,n,m}$ and $C_{M,n,m,K,k}$. Consequently,
\begin{equation}\label{eq:est_Ft_Itkm}
|(b\partial_b)^K(a\partial_a)^M F_t(a,b)|
\lesssim_{M,K}  \frac{1}{a}
\sum_{m=0}^M \sum_{k=\min(1,K)}^{M+K} |I_{t,k,m}(a,b)|,
\end{equation}
where
\begin{equation}\label{eq:Itkm_def}
I_{t,k,m}(a,b) = \int_0^\infty J_{2t}(s) \frac{(b \sqrt{1+(s/a)^2})^{k}}{e^{b\sqrt{1+(s/a)^2}}}
	 \frac{(s/a)^{2m}}{\big(1+(s/a)^2\big)^{m}} \frac{\dd s}{\sqrt{1+(s/a)^2}}.
\end{equation}
Thus, the estimates \eqref{eq:Ftab_red} reduce to proving that
\begin{equation}\label{eq:Itkm_red}
|I_{t,k+\ell,m}(a,b)| \lesssim_{k,m} (t+1)b^\ell (1+b)^{k+1} e^{-b}
\end{equation}
for $\ell=0,1$, all $k,m \in \NN$ and all $t,a,b>0$.
	
To prove \eqref{eq:Itkm_red}, we split $I_{t,k,m}$ into two parts $I_{t,k,m}^0$ and $I_{t,k,m}^\infty$, by restricting the integration interval in \eqref{eq:Itkm_def} to $(0,1)$ and $(1,\infty)$, respectively.
Then, by applying \eqref{eq:1} and the estimate
	\begin{equation}\label{eq:3}
	z^n e^{-z} \lesssim_n (1+z_0)^n e^{-z_0}, \qquad z\geq z_0 \geq 0, \ n \in \NN,
	\end{equation}
	we obtain that
	\begin{equation*}
|I^0_{t,k+\ell,m}(a,b)| \lesssim_k b^\ell (1+b)^k e^{-b} \int_0^1	 \frac{(s/a)^{2m}}{\big(1+(s/a)^2\big)^{m+(1-\ell)/2}} \,\dd s \leq b^\ell (1+b)^k e^{-b}. 
\end{equation*}
For the other part we apply the recurrence formula \eqref{eq:2} to obtain that
\[\begin{split}
	&I^\infty_{t,k+\ell,m}(a,b)\\
	&= (2t+1) b^\ell\int_1^\infty \frac{J_{2t+1}(s)}{s} \frac{\big(b\sqrt{1+(s/a)^2}\big)^{k}}{e^{b\sqrt{1+(s/a)^2}}} \frac{(s/a)^{2m} }{\big(1+(s/a)^2\big)^{m+(1-\ell)/2}} \,\dd s\\
	&\quad+b^\ell \int_1^\infty J'_{2t+1}(s) \frac{\big(b\sqrt{1+(s/a)^2}\big)^{k}}{e^{b\sqrt{1+(s/a)^2}}} \frac{ (s/a)^{2m} }{\big(1+(s/a)^2\big)^{m+(1-\ell)/2}} \,\dd s.
\end{split}\]
By using \eqref{eq:4} and \eqref{eq:3} we get that
\begin{equation*}
	\left|\int_1^\infty \frac{J_{2t+1}(s)}{s} \frac{\big(b\sqrt{1+(s/a)^2}\big)^{k}}{e^{b\sqrt{1+(s/a)^2}}} \frac{(s/a)^{2m} }{\big(1+(s/a)^2\big)^{m+(1-\ell)/2}} \,\dd s\right|
	\lesssim_k (1+b)^k e^{-b}.
\end{equation*}
Secondly, integration by parts gives that
\begin{multline*}
			\left|\int_1^\infty J'_{2t+1}(s) \frac{\big(b\sqrt{1+(s/a)^2}\big)^{k}}{e^{b\sqrt{1+(s/a)^2}}}  \frac{ (s/a)^{2m} }{\big(1+(s/a)^2\big)^{m+(1-\ell)/2}} \,\dd s\right|\\
			\leq \left|J_{2t+1}(1)\right| \frac{\big(b\sqrt{1+a^{-2}}\big)^{k}}{e^{b\sqrt{1+a^{-2}}}} \frac{a^{-2m} }{(1+a^{-2})^{m+(1-\ell)/2}}\\
			+ \int_1^\infty \frac{|J_{2t+1}(s)|}{s} \left|s\partial_s \left( \frac{\big(b\sqrt{1+(s/a)^2}\big)^{k}}{e^{b\sqrt{1+(s/a)^2}}} \frac{ (s/a)^{2m} }{\big(1+(s/a)^2\big)^{m+(1-\ell)/2}}\right)\right| \,\dd s.
\end{multline*}
Thus, much as above we obtain that
\[\begin{split}
&\left|\int_1^\infty J'_{2t+1}(s) \frac{\big(b\sqrt{1+(s/a)^2}\big)^{k}}{e^{b\sqrt{1+(s/a)^2}}} \frac{ (s/a)^{2m} }{\big(1+(s/a)^2\big)^{m+(1-\ell)/2}} \,\dd s\right|\\
&\lesssim_k (b+1)^{k} e^{-b}
	+ \sum_{\substack{i,j\in\NN\\ i+j\leq 1}}\int_1^\infty s^{-4/3} b^{k+i}   \frac{ (s/a)^{2(m+i+j)} e^{-b\sqrt{1+(s/a)^2}}}{\big(1+(s/a)^2\big)^{m+j+(i+1-k-\ell)/2}} \,\dd s\\
&\lesssim_k (b+1)^{k+1} e^{-b}.
\end{split}\]
By combining the above bounds we arrive at the estimate \eqref{eq:Itkm_red}, as desired.
\end{proof}

\begin{lm}\label{lm:2}
Let $c>0$, $\varepsilon\in(0,1)$, and $N\in\NN$. Then there holds
\begin{align*}
	\Big| \big( x\partial_x+y\partial_y\big)^N I_t(x)K_t(y)\Big| &\lesssim_{N,c,\varepsilon} (xy)^t e^{-(1-\varepsilon)(y-x)},\\
	\Big| \big( x\partial_x+y\partial_y\big)^N \big(x\partial_x-y\partial_y\big)  I_t(x)K_t(y)\Big| &\lesssim_{N,c,\varepsilon} (x+y)(xy)^t e^{-(1-\varepsilon)(y-x)},
\end{align*}
uniformly in $t\in(0,1)$ and $y>x>0$ such that $y-x\geq c$.
\end{lm}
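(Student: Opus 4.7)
The plan is to mirror the integral‐representation strategy of Lemma~\ref{lm:1}, but to exploit the sharper uniform pointwise bound \eqref{eq:7} on $J_{2t}$ in order to factor out the explicit $(xy)^t$ prefactor. Writing $a=2\sqrt{xy}$ and $b=y-x$, so that $I_t(x) K_t(y) = F_t(a,b)$ for $F_t$ as in \eqref{eq:Ftab}, a direct chain‐rule computation gives
\begin{equation*}
x\partial_x + y\partial_y = a\partial_a + b\partial_b, \qquad x\partial_x - y\partial_y = -\frac{\sqrt{a^2+b^2}}{b}\,b\partial_b,
\end{equation*}
since $(x+y)^2 = (y-x)^2 + 4xy = a^2+b^2$. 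A key structural observation is that $\sqrt{a^2+b^2}/b$ is homogeneous of degree $0$ in $(a,b)$ and is therefore annihilated by $a\partial_a+b\partial_b$; by the Leibniz rule, multiplication by $\sqrt{a^2+b^2}/b$ commutes with every power of $a\partial_a+b\partial_b$. The second bound of Lemma~\ref{lm:2} then reduces to the first, applied to $b\partial_b F_t$ in place of $F_t$, modulo the harmless factor $\sqrt{a^2+b^2}/b = (x+y)/b \leq (x+y)/c$.

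I would then reuse the derivative expansion already carried out in the proof of Lemma~\ref{lm:1}, which expresses $(b\partial_b)^K(a\partial_a)^M F_t(a,b)$ as a bounded linear combination, indexed by $m\in\{0,\dots,M\}$ and $k\in\{\min(1,K),\dots,M+K\}$, of terms $a^{-1} I_{t,k,m}(a,b)$ with $I_{t,k,m}$ defined by \eqref{eq:Itkm_def}. The improvement on Lemma~\ref{lm:1} comes from replacing the crude $|J_{2t}(s)|\leq 1$ by the uniform estimate $|J_{2t}(s)|\leq (s/2)^{2t}/\Gamma(2t+1)$, which holds for all $s\geq 0$ as soon as $2t>-1$. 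Since $\Gamma(2t+1)$ stays bounded away from $0$ for $t\in(0,1)$, feeding this into \eqref{eq:Itkm_def} and performing the successive substitutions $r = s/a$ and $w=\sqrt{1+r^2}$ should yield
\begin{equation*}
|I_{t,k,m}(a,b)|\lesssim_{k,m} a^{2t+1} b^k \int_1^\infty (w^2-1)^{t+m-1/2}\, w^{k-2m}\, e^{-bw}\,\dd w,
\end{equation*}
uniformly in $t\in(0,1)$.

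The remaining task is to bound the $w$-integral by $e^{-(1-\varepsilon)b}$. Splitting at $w=2$, the piece on $[1,2]$ is at most $C_{k,m} e^{-b}$; the only delicate case is $m=0$ with $t$ small, where $(w^2-1)^{t-1/2}$ is mildly singular at $w=1$, but $\int_1^2(w-1)^{t-1/2}\,\dd w=(t+1/2)^{-1}$ is uniformly bounded for $t\in(0,1)$. On $[2,\infty)$ one has $(w^2-1)^{t+m-1/2}\simeq w^{2(t+m)-1}$, so the integrand becomes a polynomial in $w$ times $e^{-bw}$, which is controlled by $C_{k,m,\varepsilon,c} e^{-(1-\varepsilon)b}$ thanks to $b\geq c$. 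The factor $b^k$ out front can likewise be absorbed into the exponential at the cost of a slight loss in $\varepsilon$. Summing over $(k,m)$ and dividing by $a$ produces $|(a\partial_a+b\partial_b)^N F_t(a,b)|\lesssim a^{2t} e^{-(1-\varepsilon)b}$, which, translated back via $a=2\sqrt{xy}$ and $b=y-x$, is exactly the first estimate; the second one then follows from the commutation observation in the first paragraph.

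The main technical subtlety I expect is the uniform control in $t\in(0,1)$, as several quantities appearing along the way ($\Gamma(2t+1)$, the integral $\int_1^2(w^2-1)^{t-1/2}\,\dd w$, and the polynomial coefficients produced by the derivative expansion) could in principle degenerate as $t\to 0^+$; as sketched above, each of these turns out to be harmless after direct inspection, but one has to keep track of the $t$-dependence carefully. Beyond this, the argument is essentially a routine adaptation of the computation already performed for Lemma~\ref{lm:1}.
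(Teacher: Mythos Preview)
Your proposal is correct and follows essentially the same route as the paper: the change of variables $(x,y)\mapsto(a,b)=(2\sqrt{xy},y-x)$, the reduction to bounding $a^{-1}I_{t,k,m}(a,b)$ via the derivative expansion from Lemma~\ref{lm:1}, and the replacement of the crude bound $|J_{2t}|\leq 1$ by \eqref{eq:7} to extract the factor $a^{2t}$. The only cosmetic difference is in the endgame: the paper stays in the $s$-variable and uses $\sqrt{1+s^2}\geq \max(1,s)$ directly, while you perform the extra substitution $w=\sqrt{1+r^2}$ and split at $w=2$; both are straightforward and lead to the same conclusion. Your commutation remark (that $(x+y)/b$ is homogeneous of degree $0$, hence commutes with $a\partial_a+b\partial_b$) is a clean way to phrase the reduction that the paper carries out more implicitly.
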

\begin{proof}
By using the same change of variables and notation as in Lemma \ref{lm:1}, the claimed estimates are reduced to proving that
\[
|(b \partial_b)^{K+\ell} (a\partial_a)^M F_{t}(a,b)| \lesssim_{M,K,c,\varepsilon} a^{2t} b^\ell e^{-(1-\varepsilon) b} 
\]
for $\ell=0,1$ and all $M,K \in \NN$, $t \in (0,1)$, $a>0$ and $b \geq c$. In turn, by \eqref{eq:est_Ft_Itkm}, the latter estimates are reduced to proving that
\[
\frac{1}{a b^\ell} |I_{t,k+\ell,m}(a,b)| \lesssim_{m,k,c,\varepsilon} a^{2t} e^{-(1-\varepsilon) b} 
\]
for $\ell=0,1$ and all $m,k \in \NN$, $t \in (0,1)$, $a>0$ and $b \geq c$.

Now we observe that, by applying the change of variables $s \mapsto as$ and the estimate \eqref{eq:7}, from \eqref{eq:Itkm_def} we obtain that
\[\begin{split}
	\frac{1}{a b^\ell} \big| I_{t,k,m}(a,b)\big|
	&= \left|\int_0^\infty J_{2t}(as) \bigl(b\sqrt{1+s^2}\bigr)^{k} e^{-b\sqrt{1+s^2}} \left(\frac{s^2}{1+s^2} \right)^{m} \frac{\dd s}{(1+s^2)^{(1-\ell)/2}}\right|\\
	&\lesssim a^{2t} \int_0^\infty s^{2t} \bigl(b\sqrt{1+s^2}\bigr)^{k} e^{-b\sqrt{1+s^2}} \,\dd s\\
	&\lesssim_{k,\varepsilon} a^{2t} e^{-(1-\varepsilon)b} \int_0^\infty s^{2t}  e^{- b\varepsilon s/2} \dd s \lesssim_c a^{2t} e^{-(1-\varepsilon)b},
\end{split}\]
where in the last inequality we used that $t \in (0,1)$ and $b \geq c$.
\end{proof}

\end{document}